\DeclareMathAlphabet{\mathpzc}{OT1}{pzc}{m}{it}
\numberwithin{equation}{section}
\theoremstyle{plain}
\newtheorem{thm}{Theorem}[section]
\newtheorem{lem}[thm]{Lemma}
\newtheorem{cor}[thm]{Corollary}
\newtheorem{prop}[thm]{Proposition}
\newtheorem{ntz}{Notation}[section]
\theoremstyle{definition}
\newtheorem{dfn}{Definition}[section]
\newtheorem{defn}{Definition}[section]
\newtheorem{exam}[thm]{Example}
\newtheorem{nnott}{Notation}[section]
\newtheorem{rmk}[thm]{Remark}
\DeclareMathAlphabet{\mathpzc}{OT1}{pzc}{m}{it}
\newcommand\SO{\mathbf{SO}}
\newcommand\ot{\mathfrak{o}}
\DeclareMathOperator{\R}{{\mathbb{R}}}
\DeclareMathOperator{\kt}{{\kappaup}}
\DeclareMathOperator{\Gf}{\mathbf{G}}
\DeclareMathOperator{\supp}{\mathrm{supp}}
\DeclareMathOperator{\ad}{\mathrm{ad}}
\DeclareMathOperator{\st}{\mathfrak{s}}
\DeclareMathOperator{\Z}{\mathbb{Z}}
\DeclareMathOperator{\at}{\mathfrak{a}}
\DeclareMathOperator{\pt}{\mathfrak{p}}
\DeclareMathOperator{\ft}{\mathfrak{f}}
\DeclareMathOperator{\Ht}{\textfrak{H}}
\DeclareMathOperator{\C}{\mathbb{C}}
\newcommand\Qf{\mathbf{Q}}
\newcommand\qt{\mathfrak{q}}
\newcommand\gt{\mathfrak{g}}
\newcommand\Rad{\mathpzc{R}}
\newcommand\Qq{\texttt{Q}}
\newcommand\hg{\mathfrak{h}}
\newcommand\Hq{\mathpzc{H}}
\newcommand\Tq{\mathpzc{T}}
\newcommand\Jd{\mathrm{J}}
\newcommand\qq{\mathpzc{q}}
\newcommand\pq{\mathpzc{p}}
\newcommand\SL{\mathbf{SL}}
\newcommand\SU{\mathbf{SU}}
\newcommand\CP{\mathbb{CP}}
\newcommand\Sb{\mathbf{S}}
\newcommand\Id{\mathrm{I}}
\newcommand\Bf{\mathbf{B}}
\newcommand\bt{\mathfrak{b}}
\newcommand\gl{\mathfrak{gl}}
\newcommand\slt{\mathfrak{sl}}
\newcommand\su{\mathfrak{su}}
\newcommand\sfE{\textsf{E}}
\newcommand{\Bz}{\mathpzc{B}}
\newcommand\id{\mathrm{id}}
\newcommand\vq{\mathpzc{v}}
\newcommand\wq{\mathpzc{w}}
\newcommand\uq{\mathpzc{u}}
\newcommand\bil{\textswab{b}}
\newcommand\Hb{\mathbb{H}}
\newcommand\Qe{\mathpzc{Q}}
\newcommand\aq{\mathpzc{a}}
\newcommand\sfV{\textsf{V}}
\newcommand\sfW{\textsf{W}}
\newcommand\Wi{\mathpzc{W}}
\newcommand\epi{\epsilon}
\newcommand\K{\mathbb{K}}
\newcommand\Ta{\textsf{T}}
\newcommand\Ha{\textsf{H}}
\newcommand\sfM{\textsf{M}}
\newcommand\sfF{\textsf{F}}
\newcommand\sfL{\textsf{L}}
   \def\DHLhksqrt#1#2{\setbox0=\hbox{$#1\sqrt{#2\,}$}\dimen0=\ht0
     \advance\dimen0-0.2\ht0
     \setbox2=\hbox{\vrule height\ht0 depth -\dimen0}%
     {\box0\lower0.4pt\box2}}
\title{Higher order Levi forms on  homogeneous CR manifolds}
\author{S.Marini, C.Medori, M.Nacinovich}
\address{Stefano Marini: Dipartimento di Scienze Matematiche, Fisiche e Informatiche\\ Universit\`a di Parma\\ Parco Area delle Scienze 53/A (Campus), 43124 Parma
 (Italy)} \email{stefano.marini@unipr.it}
\address{Costantino Medori:
Dipartimento di Scienze Matematiche, Fisiche e Informatiche\\ Universit\`a di Parma\\ Parco Area delle Scienze 53/A (Campus), 43124 Parma
 (Italy)} \email{costantino.medori@unipr.it}
\address{Mauro Nacinovich:
Dipartimento di Matematica\\ II Universit\`a di Roma
``Tor Ver\-ga\-ta''\\ Via della Ricerca Scientifica\\ 00133 Roma
(Italy)}
\email{nacinovi@mat.uniroma2.it}
\subjclass[2000]{Primary: 32V35, 32V40,
Secondary:  17B22, 17B10 }
\keywords{Lie pair, $CR$ algebra, Lie algebra extension,  Levi degeneracy}
\date\today
\begin{document}
\begin{abstract}
 We investigate the  nondegeneracy 
 of higher order Levi forms on
 weakly nondegenerate
 homogeneous $CR$ manifolds. Improving previous results,
 we prove that general orbits of real forms in complex flag
 manifolds have
order less or equal $3$ and the compact ones 
less or equal~$2$. Finally we construct by Lee extensions
weakly nondegenerate 
$CR$ vector bundles with arbitrary orders of nondegeneracy.
\end{abstract}

\date\today
\thanks{\\
This research has financially been supported by \emph{PRIN 2017} ``\textit{Real
and Complex Manifolds: Geometry, Topology and  
Harmonic Analysis}'' and  the Programme \emph{FIL-Quota Incentivante} of University of Parma and co-sponsored by Fondazione Cariparma.}

\maketitle 
\tableofcontents

\section*{Introduction}
The Levi form is 
a basic invariant of $CR$ geometry (see e.g. \cite{BHLN}).
It is a hermitian symmetric form on the space of tangent holomorphic vector
fields, which,
when the $CR$ codimension is larger than one, is vector valued. 
Its nondegeneracy was shown in \cite{Tan70} to be a 
sufficient condition
to apply Cartan's method to investigate equivalence and
automorphisms of $CR$ structures and
is an obvious obstruction for locally representing the manifold
as a product of a $CR$ manifold of smaller dimension and of
a nontrivial complex manifold.  
Sufficient more general conditions  
preventing a $CR$ manifold $\sfM$ from being foliated
by complex leaves of positive dimension 
or from having an infinite dimensional group of local $CR$ automorphisms 
can be expressed by
the nondegeneracy  of higher order Levi forms (see e.g. \cite{Freeman1977}).
In the case of 
homogeneous $CR$ manifolds these properties 
can be rephrased in terms of their
associated $CR$ algebras and lead to the notions of
\textit{weak nondegeneracy} and \textit{ideal nondegeneracy}
in \cite{MN05}. The last one was renamed \textit{contact nondegeneracy}
and proved sufficient for the finite dimensionality of the group of
$CR$ automorphisms in \cite{NMSM}.\par
Iterations of the Levi forms can be described by building  
descending chains of algebras of vector fields, whose lengths 
can be taken as a measure of nondegeneracy (see \S\ref{s1.1}).
One of these numbers, that we call here \emph{Levi order},
and relates to weak nondegeneracy, is the main topic of this paper.
The real submanifolds $\sfM$
of a complex flag manifold $\sfF$ of a semisimple complex group
$\Sb$ that are orbits 
of its real form $\Sb_{\R}$ 
form an interesting class
of homogeneous $CR$ manifolds,  that has  been studied 
e.g. in \cite{AMN06,AMN06b}.
In \cite{fels07} G.~Fels showed that when the isotropy $\Qf$ 
of $\sfF$ is a maximal parabolic subgroup, 
and $\sfM$ is weakly nondegenerate, 
then its Levi order
is at most  $3$ and found an example where it is in fact
equal to $3$. In \S\ref{sec2.2} we prove that this bound is 
valid for general weakly nondegenerate 
real orbits, dropping the maximality assumption
on $\Qf$ 
and give further examples of weakly nondegenerate real orbits
having Levi order $3$. Moreover we show that the minimal
orbit (the single one which is compact, cf. \cite{Wolf69})
cannot have a finite Levi order larger than $2$ and that
the same result is valid for a larger class or orbits, that
we name \textit{of the minimal type}. Orbits which are not
of the minimal type may have any finite order $1,2,3$.
Our methods are illustrated by several examples.
We point out that, together with the new results obtained here,  
those in \cite{AMN06}, where descriptions in terms of
cross-marked Satake diagrams are emphasised, 
would allow to list  
the minimal orbits of Levi orders
$1$ and $2$.\par
In \cite{fels07} G.~Fels posed the question of the existence
of weakly nondegenerate homogeneous $CR$ manifolds with
Levi order larger than $3$. In \S\ref{s3}
we exhibit,    
by constructing
some
$CR$ vector bundles over $\CP^{1}$, 
weakly nondegenerate homogeneous $CR$ manifolds
having Levi order $\qq,$ for every positive integer $\qq$.

\section{Nondegeneracy  conditions} 
\subsection{\textsc{Abstract $CR$ manifolds}}\label{s1.1}
In this subsection we discuss  some
notions of nondegeneracy 
for
general smooth abstract $CR$ manifolds of type $(n,k).$ 
We will eventually be interested in the locally homogeneous case and therefore,
in the rest of this section,  
in their reformulation in the framework of Lie algebras theory.
\par
We recall that an abstract $CR$ manifold of type $(n,k)$ is
defined by the datum, on a
smooth manifold $\sfM$ of real dimension $2n{+}k,$ of a rank $n$ 
smooth complex linear subbundle
$\Ta^{0,1}\sfM$ of its complexified tangent bundle $\Ta^{\C}\sfM,$ satisfying 
\begin{equation}
 \Ta^{0,1}\sfM\cap\overline{\Ta^{0,1}\sfM}=\{0\}
\end{equation}
and the formal
integrability condition 
\begin{equation}\label{eq1.2}
 [\Gamma^{\infty}(\sfM,\Ta^{0,1}\sfM),\Gamma^{\infty}(\sfM,\Ta^{0,1}\sfM)]\subseteq
 \Gamma^{\infty}(\sfM,\Ta^{0,1}\sfM).
\end{equation}
Set 
\begin{equation}
 \Ta^{1,0}\sfM=\overline{\Ta^{0,1}\sfM},\;\; \Ha^{\C}\sfM=\Ta^{1,0}\sfM\oplus\Ta^{0,1}\sfM,\;\;
 \Ha\sfM=\Ha^{\C}\sfM\cap\Ta\sfM.
\end{equation}
The rank $2n$ real subbundle $\Ha\sfM$ of $\Ta\sfM$ is the \emph{real contact distribution}
underlying the $CR$ structure of $\sfM.$ 

A smooth $\R$-linear bundle map $\Jd_{\sfM}:\Ha\sfM\to\Ha\sfM$ is defined by the equation 
\begin{equation}\label{eq1.4}
 \Ta^{0,1}\sfM=\{\vq+i\Jd_{\sfM}{\vq}\mid \vq\in\Ha\sfM\}.
\end{equation}
The map $\Jd_{\sfM}$ squares to $-\Id_{\Ha}$ and is the \emph{partial complex structure}
of $\sfM.$ \par
An equivalent definition of the $CR$ structure can be given by assigning first an even dimensional
distribution $\Ha\sfM$ and then a smooth partial complex structure $\Jd_{\sfM}$ on $\Ha\sfM$ 
in such a way that
the complex distribution \eqref{eq1.4} satisfies~\eqref{eq1.2}. \par 

\par \smallskip
Let us denote by $\Hq$ (resp. $\Tq,$ $\Hq^{\C}$, $\Tq^{0,1},$ $\Tq^{1,0}$)
the sheaf of germs of smooth sections of $\Ha\sfM$ 
(resp. $\Ta\sfM,$ $\Ha^{\C}\sfM,$ $\Ta^{0,1}\sfM,$ $\Ta^{1,0}\sfM$). 
\begin{defn}
 A $CR$ manifold $\sfM$ is called \emph{fundamental} at its point $x$ 
  if $\Hq_{x}$
 generates the Lie algebra $\Tq_{x}.$ 
\end{defn}
We define recursively a nested sequence of sheaves of germs of
smooth complex valued  vector fields on $\sfM$ 
\begin{equation}
 \Tq^{0,1}_{0}\supseteq\Tq^{0,1}_{1}\supseteq\cdots\supseteq\Tq^{0,1}_{\pq}
 \supseteq\Tq^{0,1}_{\pq+1}\supseteq\cdots
\end{equation}
by setting 
\begin{equation} 
\begin{cases}
 \Tq^{0,1}_{0}{=}\Tq^{0,1},\\
  \begin{aligned}
 \Tq^{0,1}_{\pq}{=}\bigsqcup_{{x\in\sfM}}\left\{Z\in{\Tq^{0,1}_{\pq-1}}_{x}\left| \; [Z,\Tq^{1,0}_{x}]
 {\subseteq}\Tq^{0,1}_{\pq-1}{}_{x}+\Tq^{1,0}_{x}\right\}\right., 
 \;\text{for $\pq{\geq}1.$}\end{aligned}
\end{cases}
\end{equation}
By conjugation we obtain another nested sequence of sheaves 
\begin{equation}
 \Tq^{1,0}_{0}\supseteq\Tq^{1,0}_{1}\supseteq\cdots\supseteq\Tq^{1,0}_{\pq}\supseteq
 \Tq^{1,0}_{\pq+1}\supseteq\cdots
\end{equation}
where
\begin{equation} 
\begin{cases}
 \Tq^{1,0}_{0}{=}\Tq^{1,0},\\
 \begin{aligned}
 \Tq^{1,0}_{\pq}{=}\bigsqcup_{{x\in\sfM}}\left\{Z\in{\Tq^{1,0}_{\pq-1}}_{x}\left| \; [Z,\Tq^{0,1}_{x}]
 {\subseteq}{\Tq^{1,0}_{\pq-1}}_{x}+\Tq^{0,1}_{x}\right\}\right.,
 \;\text{for $\pq{\geq}1.$}\end{aligned}
\end{cases}
\end{equation}
These sequences, considered 
by Freeman in \cite[Thm.3.1]{Free77},
correspond to the chain \eqref{e1.1} that we construct 
in the locally homogeneous case.
\par\medskip
In the same paper (cf. \cite[Remarks 4.5] {Free77}) 
also another sequence, introduced before   in \cite{Gr68, He64},
was considered, 
which will correspond  to \eqref{e1.2}, namely
\begin{equation} 
 \Hq\supseteq\Hq^{1}\supseteq\cdots\supseteq\Hq^{\pq}\supseteq\Hq^{\pq+1}\supseteq\cdots
\end{equation}
{with}
 \begin{equation}\begin{cases}
 \Hq^{0}=\Hq^{\C},\\
 \Hq^{\pq}{=}\bigsqcup_{{x\in\sfM}}
 \left\{X\in\Hq_{x}^{\pq-1}\mid [X,\Hq_{x}]
 \subseteq\Hq^{\pq-1}_{x}\right\},\;\;\text{for 
 $\pq>0.$}
 \end{cases}
\end{equation}
\begin{defn}
 The $CR$ manifold $\sfM$ has, at its point $x,$  
\begin{itemize}
 \item Levi 
 order $\qq$ if  
 $\Tq^{0,1}_{\qq-1}{}_{x}\neq\Tq^{0,1}_{\qq}{}_{x}=\{0\}$;
 \item contact 
 order $\qq$ if  $\Hq^{\qq-1}_{x}\neq\Hq^{\qq}_{x}\,{=}\,\{0\}.$
\end{itemize}
We say that $\sfM$ is at its point $x$ 
\begin{itemize}
 \item \emph{Levi} (resp. \!\emph{contact}) \emph{nondegenerate} if it has Levi (resp. \!contact) order~1;
 \item \emph{weakly} (resp. \!\emph{contact}) \emph{nondegenerate} if it 
 has finite Levi (resp. \!contact) order $\pq{\geq}1$;
 \item \emph{holomorphically} (resp. \!\emph{contact}) 
 \emph{degenerate} if it is not weakly (resp. \!contact)
 nondegenerate.
\end{itemize}
\end{defn}
\par
The Levi order 
at $x$ is the smallest $\qq$ for which,
given any nonzero germ $\bar{Z}\,{\in}\Tq^{0,1}_{x},$ we can find a $\pq{\leq}\qq$ and 
 $Z_{1},\hdots,Z_{\pq}\,{\in}\,
\Tq^{1,0}_{x}$ such that 
\begin{equation*}\tag{$*$}
 [Z_{1},[Z_{2},\hdots,[Z_{\pq},\bar{Z}]]]\notin \Hq^{0}_{x}.
\end{equation*}
The contact order can be defined in the same way, but 
with $Z_{1},\hdots,Z_{\pq}$ taken in $\Hq^{0}_{x}.$ 
We have therefore 
\begin{prop}
 Let us keep the notation introduced above. Fix a point $x$ in $\sfM.$ Then:
\begin{itemize}
 \item $\sfM$ has Levi  order $1$ at $x$ if and only if
 it has contact  order $1$ at $x.$ 
 \item If $\sfM$ has finite  Levi  order $\qq{\geq}2$ at $x,$ then 
 it has also finite contact  order $\qq',$ with 
 $2{\leq}\qq'{\leq}\qq$ at $x.$ \qed
\end{itemize}
\end{prop}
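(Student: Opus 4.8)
The plan is to argue entirely with the two bracket characterizations of Levi and contact order recorded just above: both test the \emph{same} family of nonzero germs $\bar Z\in\Tq^{0,1}_{x}$, and they differ only in the admissible directions $Z_{1},\dots,Z_{\pq}$, which are taken in $\Tq^{1,0}_{x}$ for the Levi order and in the larger space $\Hq^{0}_{x}=\Tq^{1,0}_{x}\oplus\Tq^{0,1}_{x}$ for the contact order. The single structural input I would isolate first is the consequence of the integrability condition \eqref{eq1.2}: it gives $[\Tq^{0,1}_{x},\Tq^{0,1}_{x}]\subseteq\Tq^{0,1}_{x}$ and, by conjugation, $[\Tq^{1,0}_{x},\Tq^{1,0}_{x}]\subseteq\Tq^{1,0}_{x}$, so that a bracket of two germs of the same pure type already lies in $\Hq^{0}_{x}$. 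Consequently, for $\bar Z\in\Tq^{0,1}_{x}$ and any $Y\in\Hq^{0}_{x}$ decomposed as $Y=Y^{1,0}+Y^{0,1}$, the summand $[Y^{0,1},\bar Z]$ lies in $\Tq^{0,1}_{x}\subseteq\Hq^{0}_{x}$, whence
\[
 [Y,\bar Z]\equiv[Y^{1,0},\bar Z]\pmod{\Hq^{0}_{x}}.
\]

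For the first assertion, the implication from Levi order $1$ to contact order $1$ is immediate: a germ detected by a single bracket with a direction in $\Tq^{1,0}_{x}\subseteq\Hq^{0}_{x}$ is a fortiori detected by a single bracket with a direction in $\Hq^{0}_{x}$. For the converse I would invoke the displayed congruence at the first (and only) step: if $\bar Z\in\Tq^{0,1}_{x}$ is nonzero and $Y\in\Hq^{0}_{x}$ satisfies $[Y,\bar Z]\notin\Hq^{0}_{x}$, the congruence forces $[Y^{1,0},\bar Z]\notin\Hq^{0}_{x}$, so $Y^{1,0}\in\Tq^{1,0}_{x}$ witnesses $\bar Z\notin\Tq^{0,1}_{1}{}_{x}$. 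As $\bar Z$ is arbitrary, $\Tq^{0,1}_{1}{}_{x}=\{0\}$, i.e. Levi order $1$.

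For the second assertion, the upper bound $\qq'{\leq}\qq$ is the observation that every Levi-detecting chain is already a contact-detecting chain, since $\Tq^{1,0}_{x}\subseteq\Hq^{0}_{x}$; hence if each nonzero $\bar Z\in\Tq^{0,1}_{x}$ is detected within $\qq$ steps using $(1,0)$-directions, it is detected within $\qq$ steps using $\Hq^{0}_{x}$-directions, so the contact order is finite and at most $\qq$. The lower bound $\qq'{\geq}2$ then follows from the first assertion together with $\qq{\geq}2$: a manifold of Levi order ${\geq}2$ is not Levi nondegenerate at $x$, hence by the first assertion not contact nondegenerate, so its finite contact order cannot equal $1$; since it is always ${\geq}1$ because $\Hq^{0}_{x}\neq\{0\}$, we get $\qq'{\geq}2$.

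The genuinely delicate point, which I would state cleanly as a lemma before the rest, is precisely the projection step encoded in the displayed congruence: modulo $\Hq^{0}_{x}$, a first bracket with an arbitrary contact direction reduces to a bracket with its $(1,0)$-component. I expect this to be the crux, since it is exactly what makes contact and Levi nondegeneracy coincide at order $1$ yet forces only the inequality $\qq'{\leq}\qq$ at higher order: the cancellation of the pure-type summand does not iterate, because after the first bracket the resulting germ need no longer lie in $\Hq^{0}_{x}$, so the argument cannot be repeated verbatim at later stages.
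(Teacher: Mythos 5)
Your proposal is correct and follows essentially the same route as the paper, which states the proposition as an immediate consequence of the two bracket characterizations (the Levi test with $Z_{1},\hdots,Z_{\pq}\in\Tq^{1,0}_{x}$ versus the contact test with directions in $\Hq^{0}_{x}$) and offers no further argument. Your only addition is to make explicit the one step the paper leaves implicit, namely that by formal integrability the $(0,1)$-component of a contact direction brackets $\bar{Z}$ back into $\Hq^{0}_{x}$, so a first-order contact witness can be replaced by its $(1,0)$-component; this is exactly the intended justification and you correctly note why it does not iterate beyond the first bracket.
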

\subsection{\textsc{Homogeneous $CR$ manifolds and 
$CR$ algebras}}
Let $\Gf_{\R}$ be a Lie group of $CR$ diffeomorphisms
acting transitively on a $CR$ manifold $\sfM.$
Fix a point $x$ of $\sfM$ and let $\piup\,{:}\,\Gf_{\R}\,{\ni}\,g\,{\to}\,g{\cdot}x\,{\in}\,\sfM$
be the natural projection. The differential at $x$ defines a map $\piup_{*}:\gt_{\R}\,{\to}\,\Ta_{x}\sfM$ 
of the Lie algebra $\gt_{\R}$ of $\Gf_{\R}$ onto the tangent space to $\sfM$ at $x.$ 
By the formal integrability of the partial complex structure of $\sfM,$
the pullback $\qt\,{=}\,(\piup^{\C}_{*})^{-1}(\Ta^{0,1}_{x}\sfM)$ 
of the space of tangent vectors
of type $(0,1)$ at $x$
by the complexification of the differential 
 is a complex Lie subalgebra $\qt$ of the complexification 
$\gt\,{=}\,\C{\otimes}_{\R}\gt_{\R}$ of $\gt_{\R}.$ Vice versa, the assignment  of a complex
Lie subalgebra $\qt$ of $\gt$  
yields a formally integrable 
$\Gf_{\R}$-equivariant partial complex structure on a locally 
homogeneous space
 $\sfM$ of
$\Gf_{\R}$ by the requirement that $\Ta_{x}^{0,1}\sfM\,{=}\,\piup^{\C}_{*}(\qt)$
(see e.g. \cite{AMN06,MN05}).
These considerations led to the following definition.
\begin{dfn}
 A \emph{ $CR$ algebra} is a pair  $(\gt_{\R},\qt)$, 
consisting of a real Lie algebra $\gt_{\R}$ and  
a complex Lie subalgebra $\qt$ of its complexification
$\gt{=}\C{\otimes}_{\R}\gt_{\R},$ 
 such that the quotient $\gt_{\R}/(\gt_{\R}\cap\qt)$ is a finite dimensional real vector space. \par
 We call the intersection $\qt\cap\gt_{\R}$  its
 \emph{isotropy subalgebra} and say that $(\gt_{\R},\qt)$ is \emph{effective}
 when $\qt\cap\gt_{\R}$ does not contain any nontrivial ideal of $\gt_{\R}.$ 
 \end{dfn}
 If $\Gf_{\R}$ is the real form of a complex Lie algebra $\Gf$ and 
 $\qt$ the Lie algebra of its closed subgroup $\Qf,$  then  $\sfM$
 is locally $CR$ diffeomorphic to 
 the orbit 
 of $\Gf_{\R}$ in the complex homogeneous space $\Gf{/}\Qf$ 
 and its $CR$ structure is \textit{induced} by the complex structure of $\Gf{/}\Qf.$
These considerations can be generalized to
\textit{locally homogeneous $CR$ manifolds} 
 (see e.g. \cite{AMN06}).
\par\smallskip
The CR-dimension and 
codimension of $\sfM$ are expressed 
in terms of its associated $CR$ algebra 
$(\gt_{\R},\qt)$ by 
\begin{equation*} 
\begin{cases}
 CR-\dim_{\C}\sfM=\dim_{\C}\qt-\dim_{\C}(\qt{\cap}\bar{\qt}),\\
 CR-\textrm{codim}\sfM=\dim_{\C}\gt - \dim_{\C}(\qt+\bar{\qt}).
\end{cases}
\end{equation*}
\par \begin{defn} We call \emph{fundamental} a $CR$ algebra $(\gt_{\R},\qt)$ such that 
$\qt{+}\bar{\qt}$ generates $\gt$ as a Lie algebra and we say that it is 
\begin{itemize}
 \item \emph{of complex type} if $\qt{+}\bar{\qt}\,{=}\,\gt,$
 \item \emph{of contact type} if $\qt{+}\bar{\qt}\,{\subsetneqq}\,\gt.$
\end{itemize}
\end{defn}
A corresponding $CR$ manifold $\sfM$ is in the first case a complex manifold
by Newlander-Nirenberg theorem (cf. \cite{AF79,NN}), 
while contact type is equivalent to the fact that its
\textit{$CR$ distribution}  is strongly non-integrable.

\subsection{\textsc{Levi-order of weak nondegeneracy}}
 The \textit{Levi form} is a basic invariant of $CR$ geometry. 
 When $\sfM$ is locally homogeneous, it can be computed by using 
 its associated $CR$ algebra $(\gt_{\R},\qt)$ 
 (for definitions and basic properties, cf.  
 e.g. \cite{BHLN}). Nondegeneracy of the Levi form can be
 stated by 
\begin{equation*}
 \forall{Z}\in\qt{\backslash}\bar{\qt}, \;\;\exists\, Z'\in\bar{\qt} \;\;\;\text{such that}\;\;\; [Z,Z']\notin\qt+\bar{\qt}.
\end{equation*}
This is equivalent to 
\begin{equation*}
 \qt^{{(1)}}\coloneqq\{Z\in\qt\mid [Z,\bar{\qt}]\subseteq\qt+\bar{\qt}\}=\qt\cap\bar{\qt}.
\end{equation*}
When this condition is not satisfied, we say that $(\gt_{\R},\qt)$ is \emph{Levi-degenerate}. 
To measure the degeneracy of the Levi form, one can consider its \textit{iterations}:
in the homogeneous case this means,
given a $Z\,{\in}\,\qt{\backslash}(\qt{\cap}\bar{\qt}),$ to seek whether it is possible to find
$L_{1},\hdots,L_{\pq}\,{\in}\,\bar{\qt}$ such that $[L_{1},\hdots,L_{\pq},Z]\,{\notin}\,
\qt{+}\bar{\qt}.$ To this aim, 
it is  convenient to consider 
the descending chain (see e.g.  \cite{fels07,Freeman1977,NMSM,MN05}) 
\begin{equation}\label{e1.1}
 \begin{cases}
\qt^{(0)}\supseteq\qt^{(1)}\supseteq\cdots\supseteq\qt^{(\pq-1)}\supseteq\qt^{(\pq)}
\supseteq\qt ^{{(p+1)}}\supseteq \cdots,\quad\text{with}\\
\qt^{(0)}=\qt,\;\;
\qt^{(\pq)}=\{Z\in\qt^{(\pq-1)}\mid [Z,\bar{\qt}]\subseteq\qt^{(\pq-1)}+\bar{\qt}\}\;\;\;\text{for $\pq{\geq}1.$}
\end{cases}
\end{equation} \par
Note that $\qt{\cap}\bar{\qt}\,{\subseteq}\,\qt^{(\pq)}$ for all integers $\pq{\geq}0.$
Since by assumption $\qt/(\qt{\cap}\bar{\qt})$ is finite dimensional,
there is a smallest nonnegative 
integer $\qq$ such that $\qt^{(\pq)}\,{=}\,\qt^{(\qq)}$ for all $\pq{\geq}\qq.$ 
\begin{defn}
 We call \eqref{e1.1} the \emph{descending Levi chain} of $(\gt_{\R},\qt).$
\par 
Let $\qq$ be a positive integer.
The $CR$ algebra $(\gt_{\R},\qt)$ is said to be 
\begin{itemize}
 \item \emph{weakly nondegenerate} of 
 \emph{Levi order $\qq$} if $\qt^{(\qq-1)}{\supsetneqq}\,\qt^{(\qq)}
 {=}\,\qt{\cap}\bar{\qt}.$
 \item \emph{strictly nondegenerate} if it is weakly nondegenerate of Levi order $1.$
\end{itemize}\par
 If $\qt^{{(\qq)}}{\neq}\qt{\cap}\bar{\qt}$ for all integers $\qq{>}0,$ we say that 
 $(\gt_{\R},\qt)$ is holomorphically degenerate. 
\end{defn}
\begin{prop}
 The terms $\qt^{(p)}$ of  \eqref{e1.1} are Lie subalgebras of~$\qt.$ 
\end{prop}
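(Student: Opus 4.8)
The plan is to proceed by induction on $p$, the only real content being closure under the bracket, since each $\qt^{(p)}$ is manifestly a complex vector subspace of $\qt$ (the condition $[Z,\bar{\qt}]\subseteq\qt^{(p-1)}+\bar{\qt}$ defining it is linear in $Z$). For the base case $\qt^{(0)}=\qt$ is a Lie subalgebra by hypothesis, so I would assume that $\qt^{(p-1)}$ is a Lie subalgebra and deduce the same for $\qt^{(p)}$.

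For the inductive step I would take $Z,W\in\qt^{(p)}$. Because $\qt^{(p)}\subseteq\qt^{(p-1)}$ and the latter is a subalgebra by the inductive hypothesis, one has $[Z,W]\in\qt^{(p-1)}$; it then remains only to verify the membership condition $[[Z,W],\bar{\qt}]\subseteq\qt^{(p-1)}+\bar{\qt}$. The key tool is the Jacobi identity, written as the derivation property
\begin{equation*}
 [[Z,W],L]=[Z,[W,L]]-[W,[Z,L]],\qquad L\in\bar{\qt}.
\end{equation*}
Since $Z,W\in\qt^{(p)}$, both $[W,L]$ and $[Z,L]$ already lie in $\qt^{(p-1)}+\bar{\qt}$, and I would analyse each of the two terms on the right-hand side separately.

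The crucial computation goes as follows. Writing $[W,L]=A+B$ with $A\in\qt^{(p-1)}$ and $B\in\bar{\qt}$, I split
\begin{equation*}
 [Z,[W,L]]=[Z,A]+[Z,B].
\end{equation*}
Here $[Z,A]\in\qt^{(p-1)}$ because both $Z$ and $A$ lie in the subalgebra $\qt^{(p-1)}$, while $[Z,B]\in\qt^{(p-1)}+\bar{\qt}$ because $Z\in\qt^{(p)}$ and $B\in\bar{\qt}$; hence $[Z,[W,L]]\in\qt^{(p-1)}+\bar{\qt}$, and symmetrically $[W,[Z,L]]\in\qt^{(p-1)}+\bar{\qt}$. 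Combining these with the Jacobi identity yields $[[Z,W],L]\in\qt^{(p-1)}+\bar{\qt}$ for every $L\in\bar{\qt}$, so that $[Z,W]\in\qt^{(p)}$ and the induction closes.

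I expect the only genuine subtlety to be precisely this splitting step: the element $[W,L]\in\qt^{(p-1)}+\bar{\qt}$ must be decomposed and its two components handled by \emph{different} hypotheses — the $\qt^{(p-1)}$-part by the inductive assumption that $\qt^{(p-1)}$ is a subalgebra, and the $\bar{\qt}$-part by the defining property of $\qt^{(p)}$. This is where the induction is actually used; everything else is formal manipulation with the Jacobi identity.
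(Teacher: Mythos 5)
Your proof is correct and follows essentially the same route as the paper's: induction on $p$, the Jacobi identity $[[Z,W],L]=[Z,[W,L]]-[W,[Z,L]]$, and the splitting of $[W,L]\in\qt^{(p-1)}+\bar{\qt}$ into a $\qt^{(p-1)}$-part (handled by the inductive subalgebra hypothesis) and a $\bar{\qt}$-part (handled by the defining property of $\qt^{(p)}$). The only cosmetic differences are that the paper treats the case $p=1$ separately before running the same induction, and compresses your explicit decomposition $[W,L]=A+B$ into the shorthand $[Z_{1}{+}Z_{2},\qt^{(p)}+\bar{\qt}]\subseteq\qt^{(p)}+\bar{\qt}$, which your write-up correctly unpacks.
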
 
\begin{proof}
 By definition, $\qt^{(0)}\,{=}\,\qt$ is a Lie subalgebra of $\qt.$ If $Z_{1},Z_{2}\,{\in}\,\qt^{(1)},$ then 
\begin{align*}
 [[Z_{1},Z_{2}],\bar{\qt}]\subseteq [Z_{1},[Z_{2},\bar{\qt}]]+
 [Z_{2},[Z_{1},\bar{\qt}]]\subseteq [Z_{1}+Z_{2},\qt+\bar{\qt}]\subseteq\qt+\bar{\qt}
\end{align*}
because $[Z_{i},\qt]\,{\subseteq}\,[\qt,\qt]\,{\subseteq}\,\qt,$ 
and $[Z_{i},\bar{\qt}]\,{\subseteq}\,\qt{+}\bar{\qt}$ by the definition of $\qt^{(1)}.$ 
This shows that $\qt^{(1)}$ is a Lie subalgebra of $\qt.$ \par
Next
we argue by recurrence. Let $\pq{\geq}1$ and assume that $\qt^{(\pq)}$
is a Lie subalgebra of $\qt.$ If $Z_{1},Z_{2}\in\qt^{(\pq+1)},$ then $[Z_{1},Z_{2}]\,{\in}\,\qt^{(\pq)}$ 
by the inductive assumption that $\qt^{(\pq)}$ is a Lie subalgebra and 
\begin{align*}
 [[Z_{1},Z_{2}],\bar{\qt}]\subseteq [Z_{1},[Z_{2},\bar{\qt}]]+
 [Z_{2},[Z_{1},\bar{\qt}]]\subseteq [Z_{1}+Z_{2},\qt^{(\pq)}+\bar{\qt}]\subseteq\qt^{(\pq)}+\bar{\qt},
\end{align*}
showing that also $[Z_{1},Z_{2}]\in\qt^{(\pq+1)}.$ 
This completes the proof. 
\end{proof}
Let us introduce the notation 
\begin{equation} 
 \Ht=\qt+\bar{\qt},\;\;\;\Ht_{\,\R}=\Ht\cap\gt_{\R}.
\end{equation}
 \par  
The 
\emph{weak nondegeneracy} 
defined here is equivalent to the notion 
of 
\cite{MN05}, consisting in the requirement 
that, for a complex Lie subalgebra $\ft$ of~$\gt,$ 
\begin{equation} \label{e1.13}
 \qt\subseteq\ft\subseteq \Ht \;\Longrightarrow \ft=\qt.
\end{equation}
Indeed, it easily follows from \cite[Lemma 6.1]{MN05} that 
\begin{equation}
 \ft=\qt+\bar{\qt}^{(\infty)},\;\;\;\text{with}\;\;\; \qt^{(\infty)}={\bigcap}_{\pq{\geq}0}\qt^{(\pq)}
\end{equation}
 is the largest complex Lie subalgebra $\ft$ of $\gt$ with
 $\qt\,{\subseteq}\,\ft\,{\subseteq}\,\Ht.$  
 \subsection{\textsc{Contact nondegeneracy}}  A less restrictive  
 nondegeneracy condition in terms of iterations of the Levi form can be expressed by requiring that,
 given $Z\,{\in}\,\qt{\backslash}(\qt{\cap}\bar{\qt})$ there are $L_{1},\hdots,L_{\pq}\,{\in}\,
 \Ht$ such that $[L_{1},\hdots,L_{\pq},Z]\,{\notin}\,\Ht.$ 
 For a $CR$ algebra of the contact type
 this is equivalent to
 the fact that any ideal $\at$ of $\gt_{\R}$ that is contained in 
 $\Ht_{\,\R}$ is contained in $\qt\,{\cap}\,\gt_{\R}.$ 
Thus this property was called \emph{ideal nondegeneracy} in
 \cite{MN05}.  \par
  When $(\gt_{\R},\qt)$ is the $CR$ algebra at $x$ of a (locally)
 homogeneous $CR$ manifold $\sfM,$ the subspace  
 $\Ht_{\,\R}$ is the pullback to $\gt_{\R}$ 
 of the real contact distribution associated to the $CR$ structure of $\sfM.$ 
 Thus 
  this notion was renamed \emph{contact nondegeneracy} in \cite{NMSM}.
  It was shown in \cite[Lemma 7.2]{MN05} that, for the
 complexification $\at$ of the largest ideal of $\gt_{\R}$ contained in $\Ht_{\,\R},$ the sum
 $\at{+}\qt\,{\cap}\,\bar{\qt}$ is the limit of the descending chain 
\begin{equation} \label{e1.2}
\begin{cases}
 \qt^{[0]}\supseteq \qt^{[1]}\supseteq\cdots\supseteq\qt^{[\pq-1]}{\supseteq}\,\qt^{[\pq]}{\supseteq}\,
 \qt^{[\pq+1]}\supseteq \cdots,\quad\text{with}\\
 \qt^{[0]}=\Ht,\;\;\;
 \qt^{[\pq]}=\{Z\in\Ht \mid [Z,\Ht]\subseteq\qt^{[\pq-1]}\}, \;\;\text{for $\pq{\geq}1.$}
\end{cases}
\end{equation}
Since $\qt\,{\cap}\,\bar{\qt}\,{\subseteq}\,\qt^{[\pq]}$ 
for all integers $\pq{\geq}0,$ the chain \eqref{e1.2}
stabilizes. 
\begin{rmk} Note that, for $\pq{\geq}1,$  
\begin{equation}
 \qt^{[\pq]}=\{Z\in\qt^{[\pq-1]}\mid [Z,\qt^{[0]}]\subseteq\qt^{[\pq-1]}\}.
\end{equation}
This is true in fact for $\pq{=}1$ and for $\pq{>}2$ follows from 
$\qt^{[\pq-1]}{\subseteq}\, \qt^{[\pq-2]}.$ 
 
\end{rmk}
\begin{defn}
 We call \eqref{e1.2} the \emph{descending contact chain}.
\par 
Let $\qq$ be a positive integer.
 The $CR$ algebra $(\gt_{\R},\qt)$ is said to have 
\begin{itemize}
\item \emph{finite contact order $\qq$} if $\qt^{[\qq-1]}{\supsetneqq}\,\qt^{[\qq]}{=}\,
\qt{\cap}\bar{\qt}.$ 
\end{itemize}
 If $\qt^{{[\qq]}}{\neq}\qt{\cap}\bar{\qt}$ for all integers $\qq{>}0,$ we say that 
 $(\gt_{\R},\qt)$ is \emph{contact degenerate}. 
\end{defn}
We note that we can equivalently use the descending chain 
\begin{equation}
 \label{ideal} 
\begin{cases}
 \at_{\R}^{(0)}\supseteq\at_{\R}^{(1)}\supseteq \cdots \supseteq \at_{\R}^{(\pq-1)}\supseteq
 \at^{(p)}_{\R}\supseteq\at^{(\pq+1)}_{\R}\supseteq \cdots, \quad\text{with}\\
 \at_{\R}^{(0)}=\Ht_{\,\R},\;\;
  \at_{\R}^{(\pq)}{=}\{X\in\at_{\R}^{(\pq-1)}\mid [X,\Ht_{\,\R}]\subseteq\at_{\R}^{(\pq-1)}\},\;\;
 \text{for $\pq{\geq}1$}
\end{cases}
\end{equation}
of \cite{MN05}. This follows from 
\begin{lem}
 With the notation introduced above, we have: 
\begin{enumerate}
 \item For each $\pq{\geq}1,$ $\at_{\R}^{(\pq)}$ is a Lie algebra and, for $\pq{>}1$ an ideal of
 $\at_{\R}^{(1)}$;
 \item Let $\at^{(\pq)}$ be the complexification of $\at^{(\pq)}_{\R}.$ Then 
\begin{equation*}
 \qt^{[\pq]}=\qt{\cap}\bar{\qt}+\at^{(\pq)}\;\;\;\text{for all $\pq{\geq}0.$}
\end{equation*}
\end{enumerate}
\end{lem}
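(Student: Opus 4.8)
The plan is to prove the two assertions in turn: assertion (1) by a straightforward recurrence on $\pq$ using only the Jacobi identity, and assertion (2) by an induction that identifies the real trace of the complex chain \eqref{e1.2} with the real chain \eqref{ideal}. For (1), the base step $\pq=1$ is immediate: if $X,Y\in\at_{\R}^{(1)}$ then $[X,Y]\in\Ht_{\,\R}$ (because $[X,\Ht_{\,\R}]\subseteq\Ht_{\,\R}$ and $Y\in\Ht_{\,\R}$), while expanding $[[X,Y],Z]=[X,[Y,Z]]-[Y,[X,Z]]$ and using $[X,\Ht_{\,\R}],[Y,\Ht_{\,\R}]\subseteq\Ht_{\,\R}$ gives $[[X,Y],\Ht_{\,\R}]\subseteq\Ht_{\,\R}$, i.e. $[X,Y]\in\at_{\R}^{(1)}$. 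For the inductive step I would assume that $\at_{\R}^{(\pq-1)}$ is a subalgebra (and, for $\pq-1>1$, an ideal of $\at_{\R}^{(1)}$) and rerun the same Jacobi computation: for $X,Y\in\at_{\R}^{(\pq)}$ one gets $[X,Y]\in\at_{\R}^{(\pq-1)}$ from the inductive subalgebra property, and $[[X,Y],\Ht_{\,\R}]\subseteq\at_{\R}^{(\pq-1)}$ because $[Y,\Ht_{\,\R}]\subseteq\at_{\R}^{(\pq-1)}$, $X\in\at_{\R}^{(\pq-1)}$, and $\at_{\R}^{(\pq-1)}$ is a subalgebra; taking instead $X\in\at_{\R}^{(\pq)}$ and $Y\in\at_{\R}^{(1)}$ and invoking the inductive ideal property yields that $\at_{\R}^{(\pq)}$ is an ideal of $\at_{\R}^{(1)}$. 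This part is routine and I expect no obstacle.

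For (2), I would first isolate two structural facts. Since the conjugation of $\gt$ over $\gt_{\R}$ is a real automorphism and $\overline{\Ht}=\Ht$, an easy induction shows that every term $\qt^{[\pq]}$ of \eqref{e1.2} is stable under conjugation; hence $\qt^{[\pq]}=\C\otimes_{\R}(\qt^{[\pq]}\cap\gt_{\R})$, exactly as $\Ht=\C\otimes_{\R}\Ht_{\,\R}$ and $\at^{(\pq)}=\C\otimes_{\R}\at_{\R}^{(\pq)}$. It therefore suffices to prove the identity after intersecting with $\gt_{\R}$. Second, I would use the reformulation from the Remark, $\qt^{[\pq]}=\{Z\in\qt^{[\pq-1]}\mid [Z,\Ht]\subseteq\qt^{[\pq-1]}\}$ for $\pq\geq 1$, together with the observation that for a real $X$ the inclusion $[X,\Ht]\subseteq\qt^{[\pq-1]}$ is equivalent to $[X,\Ht_{\,\R}]\subseteq\qt^{[\pq-1]}\cap\gt_{\R}$ (because $[X,\Ht]$ is the complexification of the real subspace $[X,\Ht_{\,\R}]$ and $\qt^{[\pq-1]}$ is conjugation stable).

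Combining these, the real traces obey $\qt^{[\pq]}\cap\gt_{\R}=\{X\in\qt^{[\pq-1]}\cap\gt_{\R}\mid [X,\Ht_{\,\R}]\subseteq\qt^{[\pq-1]}\cap\gt_{\R}\}$, which is precisely the recursion defining \eqref{ideal}; since both chains start from $\Ht_{\,\R}$ at $\pq=0$, an induction on $\pq$ gives $\qt^{[\pq]}\cap\gt_{\R}=\at_{\R}^{(\pq)}$, and complexifying (using the conjugation stability above) yields $\qt^{[\pq]}=\at^{(\pq)}$. To match the stated form $\qt^{[\pq]}=\qt\cap\bar{\qt}+\at^{(\pq)}$ I would then invoke the inclusion $\qt\cap\bar{\qt}\subseteq\qt^{[\pq]}$ recorded just before the lemma: combined with $\qt^{[\pq]}=\at^{(\pq)}$ it shows $\qt\cap\bar{\qt}\subseteq\at^{(\pq)}$, so $\qt\cap\bar{\qt}+\at^{(\pq)}=\at^{(\pq)}=\qt^{[\pq]}$.

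The delicate point — and the one I would be most careful about — is the reconciliation of the two chains at the level of the defining membership conditions: \eqref{e1.2} a priori tests $Z$ over all of $\Ht$, whereas \eqref{ideal} tests $X$ only over $\at_{\R}^{(\pq-1)}$, so the naive real-trace of the original definition of $\qt^{[\pq]}$ looks strictly larger than $\at_{\R}^{(\pq)}$. It is exactly the Remark that closes this gap, by upgrading ``$Z\in\Ht$ with $[Z,\Ht]\subseteq\qt^{[\pq-1]}$'' to ``$Z\in\qt^{[\pq-1]}$'' via $\qt^{[\pq-1]}\subseteq\qt^{[\pq-2]}$; I would make sure this reduction is in place for every $\pq\geq1$ (in particular checking the base case $\pq=1$, where it is trivial) before running the real-point induction, since the whole matching of \eqref{e1.2} with \eqref{ideal} rests on it.
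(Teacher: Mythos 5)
Your treatment of assertion (1) is fine and agrees with the paper's view that it is routine, and the technical core of your part (2) --- conjugation stability of each $\qt^{[\pq]},$ the reduction to real points, and the use of the Remark to put \eqref{e1.2} and \eqref{ideal} into the same recursive shape --- is correct as far as it goes. (For comparison: the paper's own proof is only a two-line sketch, declaring (1) trivial and checking (2) only at $\pq{=}0.$) The genuine problem is that your induction proves \emph{too much}: it yields the exact equalities $\qt^{[\pq]}\cap\gt_{\R}=\at_{\R}^{(\pq)}$ and hence $\qt^{[\pq]}=\at^{(\pq)},$ with no $\qt\cap\bar{\qt}$ summand at all. Combined with the containment $\qt\cap\bar{\qt}\subseteq\qt^{[\pq]}$ that you import from the paragraph preceding the lemma, this would force $\qt\cap\bar{\qt}\subseteq\at^{(\pq)}$ for every $\pq,$ i.e.\ the whole isotropy would survive in every term of the real chain \eqref{ideal}. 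That is false in the simplest examples: take the parabolic $CR$ algebra of the minimal orbit of $\SU(1,2)$ in $\CP^{2}$ (the sphere $S^{3}$), with $\Rad$ of type $\mathrm{A}_{2},$ $\Phi=\{\alphaup_{1}\},$ $\bar{\alphaup}_{1}=\alphaup_{2}.$ There one computes $\qt^{[1]}=\at^{(1)}=\qt\cap\bar{\qt}=\hg\oplus\st^{-\alphaup_{1}}\oplus\st^{-\alphaup_{2}}\oplus\st^{-\alphaup_{1}-\alphaup_{2}},$ but a generic $H\in\hg$ has $[H,\st^{\alphaup_{1}}]=\st^{\alphaup_{1}}\not\subseteq\qt^{[1]}$ (because $[\st^{\alphaup_{1}},\st^{\alphaup_{2}}]=\st^{\alphaup_{1}+\alphaup_{2}}\not\subseteq\Ht$), so $\hg\not\subseteq\at^{(2)}$ even though $\hg\subseteq\qt\cap\bar{\qt}.$

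What this reveals is a tension you should have caught rather than smoothed over. If the recursion \eqref{e1.2} is read literally, then your trace identity $\qt^{[\pq]}=\at^{(\pq)}$ is correct, but the imported containment --- and with it the lemma's displayed formula --- fails from $\pq{=}2$ on, so your final ``absorption'' step rests on a false premise. If instead, as the citation of \cite[Lemma 7.2]{MN05}, the stabilization assertion, and the parallel ``$+\,\bar{\qt}$'' appearing in the chain \eqref{e1.5} all indicate, the bracket condition in \eqref{e1.2} is meant to be tested modulo $\qt\cap\bar{\qt}$ (equivalently, $\qt\cap\bar{\qt}$ is adjoined at each step), then the step you dismiss as a formality is exactly where the content of part (2) lies: the real trace of the corrected complex recursion differs from \eqref{ideal} precisely by the summand $\qt\cap\bar{\qt},$ and showing that this summand contributes exactly $\qt\cap\bar{\qt}$ and nothing more \emph{is} the lemma; your verbatim matching of the two recursions no longer applies in that setting. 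Either way the last paragraph of your proof does not close, and the fact that your argument renders the summand $\qt\cap\bar{\qt}$ redundant --- while the statement visibly treats it as necessary --- was the red flag to investigate.
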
 
\begin{proof}
 The first statement is trivial. We can check the second one by recurrence. This is in fact true
 for $\pq{=}0,$ since $\qt{+}\bar{\qt}$ is the complexification of~$\Ht.$ 
\end{proof}
We already considered two descending chains whose length  
defines 
the order of 
contact nondegeneracy. It is in fact convenient
to consider a third one, which is easier to deal with (see \S\ref{sec2.2} below), namely: 
\begin{equation}\label{e1.5}
 \begin{gathered}
 \qt^{\{0\}}\supseteq\qt^{\{1\}}\supseteq\cdots\supseteq\qt^{\{\pq\}}\subseteq\qt^{\{\pq+1\}}\supseteq\cdots \\
 \text{with}\quad \begin{cases} \qt^{\{0\}}=\qt^{(0)}=\{Z\in\qt\mid [Z,\bar{\qt}]\subseteq\qt+\bar{\qt}\},\\
 \qt^{\{\pq\}}=\{Z\in\qt^{\{\pq-1\}}\mid [Z,\qt+\bar{\qt}]\subseteq\qt^{\{\pq-1\}}+\bar{\qt}\},\;\;
 \text{for $\pq{>}0.$}
 \end{cases}
\end{gathered}
\end{equation}
The equivalence is a consequence of 
\begin{prop} With the notation above: 
\begin{itemize}
 \item $\qt^{\{\pq\}}=(\qt\cap\bar{\qt}+\at^{(\pq)})\cap\qt$ for all integers $\pq{\geq}0.$ 
 \item $(\gt_{\R},\qt)$ is contact nondegenerate of order $\qq{\geq}1$ if and only if
 \begin{equation*}\vspace{-19pt}
 \qt^{\{\pq-1\}}\neq\qt^{\{\qq\}}=\qt\cap\bar{\qt}.\end{equation*} \qed
\end{itemize}
 \end{prop}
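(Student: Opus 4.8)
The plan is to derive both assertions from the preceding Lemma, which identifies the contact chain with the ideal chain: $\qt^{[\pq]}=\qt\cap\bar\qt+\at^{(\pq)}$ for every $\pq\geq 0$. Since $\at^{(\pq)}$ and $\qt\cap\bar\qt$ are complexifications of real subspaces, each $\qt^{[\pq]}$ is stable under the conjugation of $\gt$ over $\gt_{\R}$. As $\qt\cap\bar\qt\subseteq\qt$, the modular law turns the first bullet into the equivalent and more transparent equality
\begin{equation*}
 \qt^{\{\pq\}}=\qt^{[\pq]}\cap\qt,\qquad \pq\geq 0,
\end{equation*}
so the whole weight of the argument falls on proving this; the second bullet will then be formal.

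I would prove this equality by induction on $\pq$, carrying along the auxiliary \emph{splitting} $\qt^{[\pq]}=(\qt^{[\pq]}\cap\qt)+(\qt^{[\pq]}\cap\bar\qt)$. The case $\pq=0$ is trivial, and $\pq=1$ is a direct computation: for $Z\in\qt$ one has $[Z,\qt]\subseteq\qt\subseteq\Ht$, so $[Z,\Ht]\subseteq\Ht$ iff $[Z,\bar\qt]\subseteq\Ht$, exhibiting $\qt^{\{1\}}=\at^{(1)}\cap\qt=\qt^{[1]}\cap\qt$, and here the splitting already uses that $\qt,\bar\qt$ are subalgebras. For the inductive step the two membership conditions to be compared have genuinely different shapes: in view of the direct sum $\Ht/(\qt\cap\bar\qt)=\qt/(\qt\cap\bar\qt)\oplus\bar\qt/(\qt\cap\bar\qt)$, passing from $Z\in\qt^{\{\pq-1\}}$ to $Z\in\qt^{\{\pq\}}$ is the \emph{one-sided} requirement $[Z,\Ht]\subseteq\qt^{\{\pq-1\}}+\bar\qt$, which only constrains the $\qt$-component of $[Z,\Ht]$, whereas membership in $\qt^{[\pq]}\cap\qt$ is the \emph{two-sided} requirement $[Z,\Ht]\subseteq\qt^{[\pq-1]}$.

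The bridge between the two is the elementary identity
\begin{equation*}
 (U+\bar\qt)\cap(\bar U+\qt)=U+\bar U\qquad\text{whenever }\qt\cap\bar\qt\subseteq U\subseteq\qt,
\end{equation*}
with $\bar U$ the conjugate of $U$, proved by noting that two representatives of an element of the left-hand side differ by an element of $\qt\cap\bar\qt\subseteq U\cap\bar U$. Taking $U=\qt^{\{\pq-1\}}$ and using the inductive splitting in the form $\qt^{[\pq-1]}=\qt^{\{\pq-1\}}+\overline{\qt^{\{\pq-1\}}}$, the right-hand side is $\qt^{[\pq-1]}$; hence the two-sided condition is exactly the conjunction of the given one-sided condition $[Z,\Ht]\subseteq\qt^{\{\pq-1\}}+\bar\qt$ and a \emph{second} one-sided condition $[Z,\Ht]\subseteq\overline{\qt^{\{\pq-1\}}}+\qt$ controlling the $\bar\qt$-component. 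The inclusion $\qt^{[\pq]}\cap\qt\subseteq\qt^{\{\pq\}}$ then drops out at once. The reverse inclusion is the real content, and it hinges on showing that this second condition holds automatically for $Z\in\qt^{\{\pq\}}$. This is the step I expect to be the main obstacle: the defining condition of $\qt^{\{\pq\}}$ says nothing directly about the $\bar\qt$-component of $[Z,\Ht]$, which a priori is only known to lie one level too high, so recovering it forces one to feed the memberships $Z\in\qt^{\{\pq-1\}},\dots,\qt^{\{1\}}$ back through the Jacobi identity, using that $\qt$ and $\bar\qt$ are subalgebras and that $\at^{(\pq)}$ is an ideal of $\at^{(1)}$ (the Lemma). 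Equivalently, it amounts to the splitting $\qt^{[\pq]}=(\qt^{[\pq]}\cap\qt)+(\qt^{[\pq]}\cap\bar\qt)$, which is \emph{false} for a general conjugation-invariant subspace of $\Ht$ and therefore genuinely depends on the bracket structure; this is the technical heart, essentially the content imported from \cite{MN05}.

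Finally, granting the displayed equality and the splitting, the second bullet is immediate. If $(\gt_{\R},\qt)$ is contact nondegenerate of order $\qq$, then $\qt^{[\qq]}=\qt\cap\bar\qt\subsetneq\qt^{[\qq-1]}$; intersecting with $\qt$ gives $\qt^{\{\qq\}}=\qt\cap\bar\qt$, while the splitting together with the symmetry $\qt^{[\qq-1]}\cap\bar\qt=\overline{\qt^{[\qq-1]}\cap\qt}$ forces $\qt^{\{\qq-1\}}=\qt^{[\qq-1]}\cap\qt\supsetneq\qt\cap\bar\qt$, i.e. $\qt^{\{\qq-1\}}\neq\qt^{\{\qq\}}$. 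The converse runs identically: from $\qt^{\{\qq\}}=\qt\cap\bar\qt$ the same splitting returns $\qt^{[\qq]}=\qt\cap\bar\qt$, and $\qt^{\{\qq-1\}}\neq\qt\cap\bar\qt$ gives $\qt^{[\qq-1]}\neq\qt\cap\bar\qt$, which is contact nondegeneracy of order $\qq$.
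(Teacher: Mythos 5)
Your preparatory steps are correct as far as they go: the bridge identity $(U+\bar{\qt})\cap(\bar{U}+\qt)=U+\bar{U}$ for $\qt\cap\bar{\qt}\subseteq U\subseteq\qt$ is true, and, granted the splitting $\qt^{[\pq]}=(\qt^{[\pq]}\cap\qt)+(\qt^{[\pq]}\cap\bar{\qt})$, it does identify $\qt^{[\pq]}\cap\qt$ with the set of $Z\in\qt$ satisfying both the defining condition of \eqref{e1.5} and the mirror condition $[Z,\Ht]\subseteq\overline{\qt^{\{\pq-1\}}}+\qt$. But the proposal stops exactly where the proposition lives: neither the splitting beyond $\pq=1$ (where your cross-term argument does work) nor the automaticity of the mirror condition is proved, and the two gestures you make toward closing the induction cannot succeed. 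Jacobi gives $[[Z,\bar{W}],W']=[Z,[\bar{W},W']]+[[Z,W'],\bar{W}]$, and since $\Ht$ is \emph{not} a subalgebra the term $[Z,[\bar{W},W']]$ is controlled by none of your hypotheses, which constrain $[Z,\cdot\,]$ only on $\Ht$; and \cite[Lemma 7.2]{MN05}, the only result invoked in this subsection, identifies the \emph{limit} of the chain \eqref{e1.2} with $\qt\cap\bar{\qt}+\at$ and says nothing term by term. (Note also that applying the bridge with $U=\qt^{\{\pq-1\}}$ presupposes $\qt\cap\bar{\qt}\subseteq\qt^{\{\pq-1\}}$, which is itself not automatic for \eqref{e1.5} as written, whose defining condition forces $[Z,\qt]\subseteq\qt^{\{\pq-1\}}$ even for $Z\in\qt\cap\bar{\qt}$.) The paper states the proposition with a bare \qed, so there is no argument there to borrow either.

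The gap is moreover not a mere omission: for the chain exactly as defined in \eqref{e1.5} the step you flagged is false, and with it both bullets. Let $\gt$ be the nine-dimensional complex nilpotent Lie algebra with basis $z,\bar{z},w,\bar{w},a,\bar{a},s,p,\bar{p}$, conjugation interchanging barred and unbarred elements and fixing $s$, and nonzero brackets
\begin{equation*}
 [z,\bar{w}]=\bar{a},\;\; [\bar{z},w]=a,\;\; [w,\bar{w}]=is,\;\; [\bar{a},w]=-ip,\;\; [a,\bar{w}]=i\bar{p},\;\; [s,z]=-p,\;\; [s,\bar{z}]=-\bar{p}.
\end{equation*}
Jacobi holds: grade $z,w$ in degree $1$, $a,s$ in degree $2$, $p$ central in degree $3$; all identities of total degree ${\geq}4$ are trivial, and the only degree-$3$ identities with nonzero terms, those on $(z,w,\bar{w})$ and $(\bar{z},w,\bar{w})$, cancel. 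Take $\qt=\langle z,w,a\rangle$, an abelian subalgebra with $\qt\cap\bar{\qt}=\{0\}$ and $\Ht=\langle z,\bar{z},w,\bar{w},a,\bar{a}\rangle$; the $CR$ algebra $(\gt_{\R},\qt)$ is even fundamental. A direct computation gives $\qt^{[1]}=\at^{(1)}=\langle z,\bar{z}\rangle$ and $\qt^{[2]}=\at^{(2)}=\{0\}$, so $(\gt_{\R},\qt)$ is contact nondegenerate of order $2$ (and $\Ht_{\,\R}$ contains no nonzero ideal, since an ideal inside $\Ht$ lies in every $\at^{(\pq)}$). Yet $[z,\Ht]=\C\,\bar{a}\subseteq\bar{\qt}$, so $z$ survives in $\qt^{\{\pq\}}$ for every $\pq$ — under either reading of the garbled initial term $\qt^{\{0\}}$ — giving $\qt^{\{\pq\}}=\langle z\rangle\neq\{0\}=(\qt\cap\bar{\qt}+\at^{(\pq)})\cap\qt$ for $\pq\geq 2$: both bullets fail, and the failure is precisely your mirror condition, since $[z,\bar{w}]=\bar{a}\notin\overline{\qt^{\{1\}}}+\qt$. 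So the statement is not provable at the level of generality of \S1 at which you (and the paper) posed it; it can only be rescued with extra structure — e.g.\ for the parabolic $CR$ algebras of \S\ref{sec2.2}, where every term of every chain is the sum of $\hg$ and of root spaces each contained in $\qt$ or in $\bar{\qt}$, so that at least your splitting costs nothing — or by amending \eqref{e1.5}, say to $[Z,\qt+\bar{\qt}]\subseteq\qt^{\{\pq-1\}}+\overline{\qt^{\{\pq-1\}}}$, which on the example above does reach $\{0\}$ at step $2$. Your inductive frame is the right one, but the two statements you left unproved are the whole theorem, and as stated they are not true.
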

 Since we obviously have the inclusion 
\begin{equation}
 \qt^{\{\pq\}}\subseteq\qt^{(\pq)}\;\; \forall \pq{\geq}0,
\end{equation}
we obtain 
\begin{prop} 
If the $CR$ algebra $(\gt_{\R},\qt)$ has Levi order $\qq{<}\infty,$ then $(\gt_{\R},\qt)$
has contact  order $\qq'{\leq}\qq.$ A contact degenerate $(\gt_{\R},\qt)$
is also holomorphically degenerate.\qed
\end{prop}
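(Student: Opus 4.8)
The plan is to derive both assertions from the single inclusion $\qt^{\{\pq\}}\subseteq\qt^{(\pq)}$ recorded just above, together with the description of the contact order in terms of the chain \eqref{e1.5} supplied by the preceding Proposition. The crucial point is that $\qt{\cap}\bar{\qt}$ is a lower bound for every term of both chains: from the identity $\qt^{\{\pq\}}=(\qt{\cap}\bar{\qt}+\at^{(\pq)}){\cap}\qt$ one reads off at once that $\qt{\cap}\bar{\qt}\subseteq\qt^{\{\pq\}}$ for each $\pq\geq0$, while the analogous containment $\qt{\cap}\bar{\qt}\subseteq\qt^{(\pq)}$ for the Levi chain was already noted after \eqref{e1.1}.

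To establish the first statement, I would assume that $(\gt_{\R},\qt)$ has finite Levi order $\qq$, so that $\qt^{(\qq)}=\qt{\cap}\bar{\qt}$ by definition. Substituting this into the inclusion $\qt^{\{\qq\}}\subseteq\qt^{(\qq)}$ yields $\qt^{\{\qq\}}\subseteq\qt{\cap}\bar{\qt}$, and together with the reverse containment $\qt{\cap}\bar{\qt}\subseteq\qt^{\{\qq\}}$ this forces $\qt^{\{\qq\}}=\qt{\cap}\bar{\qt}$. Hence the chain \eqref{e1.5} has already reached its stationary value $\qt{\cap}\bar{\qt}$ at step $\qq$; since by the preceding Proposition the contact order $\qq'$ is the least index at which \eqref{e1.5} attains $\qt{\cap}\bar{\qt}$, we conclude $\qq'\leq\qq$.

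The second statement is then just the contrapositive of the first. If $(\gt_{\R},\qt)$ is contact degenerate, then $\qt^{\{\pq\}}\neq\qt{\cap}\bar{\qt}$ for every $\pq>0$; were the Levi order finite, the first part would produce a finite contact order, a contradiction. Therefore the Levi order is infinite, that is, $(\gt_{\R},\qt)$ is holomorphically degenerate.

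Since the one substantive ingredient, the inclusion $\qt^{\{\pq\}}\subseteq\qt^{(\pq)}$, is already in hand, no real obstacle remains. The only point that demands care is the bookkeeping with the off-by-one conventions governing the three chains and the definition of \emph{order}, so as to make sure that the inequality $\qq'\leq\qq$ --- and not some shifted variant --- emerges.
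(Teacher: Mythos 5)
Your proof is correct and takes essentially the same approach as the paper, which deduces the proposition directly from the inclusion $\qt^{\{\pq\}}\subseteq\qt^{(\pq)}$ noted just before it, combined with the characterization of the contact order through the chain \eqref{e1.5}. Your elaboration --- verifying the lower bound $\qt{\cap}\bar{\qt}\subseteq\qt^{\{\pq\}}$ so that $\qt^{\{\qq\}}=\qt{\cap}\bar{\qt}$ follows by squeezing, and obtaining the second claim as the contrapositive --- just makes explicit the steps the paper leaves to the reader with its \emph{qed}.
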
 

\section{Orbits of real forms in complex flag manifolds} \label{sec2.2} 
\subsection{\textsc{Complex flag manifolds}}\label{sub2.1}
A complex flag manifold $\sfF$ is
a smooth compact algebraic variety that can be described as the quotient
of a
complex semisimple Lie group $\Sb$ by  a parabolic subgroup $\Qf$;
according to Wolf \cite{Wolf69}, 
a real form $\Sb_{\R}$ of $\Sb$ has finitely many orbits in $\sfF.$
Only one of them, having minimal dimension, is compact. 
With the partial complex structures induced by $\sfF,$
these orbits make a class of homogeneous
$CR$ manifolds that were studied by many authors 
(see e.g. 
\cite{AMN06, AMN08, AMN06b,  BrLo02, fels07, FHW06, GS04, LN2005, MaNa1}).
\paragraph{\textsc{Cross-marked Dynkin diagrams}}
Being connected and simply connected, a
complex flag manifold $\sfF\,{=}\,\Sb/\Qf$ is 
completely described by the Lie pair
$(\st,\qt)$ consisting of the Lie algebras of $\Sb$ and of $\Qf$
and vice versa to any  Lie pair $(\st,\qt)$ of a complex semisimple Lie algebra and
its parabolic subalgebra $\qt$ 
corresponds a unique flag manifold $\sfF.$ 
Therefore  the classification of complex flag manifolds reduces to that
of parabolic subalgebras of semisimple complex Lie algebras.
Parabolic subalgebras  
$\qt$  of $\st$ are classified, modulo automorphisms, by a finite set of parameters.
In fact, 
after fixing any Cartan subalgebra $\hg$ of $\st,$ their equivalence classes
are in one to one correspondence with the subsets of a basis $\Bz$ 
of simple roots 
of the
root system $\Rad$ of $(\st,\hg)$ (see e.g. \cite[Ch.VIII,\S{3.4}]{Bou82}).\par 
We recall that  
the Dynkin diagram $\Delta_{\Bz}$ is a graph with no loops, 
whose nodes are the roots in $\Bz$ and 
in which two nodes may be joined by at most
$3$ edges. 
Each root $\betaup$ in $\Rad$ can be witten in a unique way as a 
notrivial linear combination 
\begin{equation}\label{e1.24}
 \betaup={\sum}_{\alphaup\in\Bz}k_{\betaup,\alphaup}\alphaup,
\end{equation}
with integral coefficients $k_{\betaup,\alphaup}$ which are either all $\geq{0},$
or all $\leq{0}$ and we set 
\begin{equation}
 \supp(\betaup)=\{\alphaup\in\Bz\mid k_{\betaup,\alphaup}\neq{0}\}.
\end{equation}\par 
The parabolic subalgebras
$\qt$ 
are paremetrized, modulo isomorphisms, 
by subsets $\Phi$ of $\Bz$: to a $\Phi\,{\subseteq}\,\Bz$ we associate
\begin{equation} \label{eq1.22}
\begin{cases}
 \Qq_{\Phi}=\{\betaup\in\Rad\mid k_{\betaup,\alphaup}\leq{0},\,\forall\alphaup\in\Phi\},\\
\qt_{\phiup}=\hg\oplus{\sum}_{\betaup\in\Qq_{\Phi}}\st^{\betaup},\;\;\;
\text{with $\st^{\betaup}\,{=}\,\{Z\,{\in}\,\st\,{\mid}\, [H,Z]\,{=}\,\betaup(H)Z,\;\forall H\,{\in}\,\hg\}.$}
\end{cases}
\end{equation}\par 
The set $\Qq_{\Phi}$ is a \emph{parabolic} set of roots, i.e. 
\begin{equation*}
 (\Qq_{\Phi}+\Qq_{\Phi})\cap\Rad\subseteq\Qq_{\Phi}\;\; and \;\; \Qq_{\Phi}\cup(-\Qq_{\Phi})=\Rad.
\end{equation*}
To specify the $\qt_{\Phi}$ of \eqref{eq1.22}
we can cross the nodes corresponding to the roots
in $\Phi.$ In this way each cross-marked Dynkin diagram encodes
a specific complex flag manifold $\sfF_{\Phi}.$ 
\par
\begin{nnott}\label{n1}
Let $\xiup_{\Phi}$ be the linear functional on the linear span of $\Rad$ which equals one
on the roots in $\Phi$ and zero on those in $\Bz{\backslash}\Phi.$ Then 
\begin{equation}
 \Qq_{\Phi}=\{\betaup\in\Rad\mid \xiup_{\Phi}(\betaup)\leq{0}\}
\end{equation}
and we get partitions
\begin{equation} \label{equ2.5}
\left\{
\begin{aligned} \Qq_{\Phi}&=\Qq^{r}_{\Phi}\cup\Qq^{n}_{\Phi},\;\;\;
\Rad =\Qq^{r}_{\Phi}\cup\Qq^{n}_{\Phi}\cup\Qq^{c}_{\Phi},\;\;\text{with}\\
 \Qq^{r}_{\Phi}&=\{\betaup\in\Qq_{\Phi}\mid -\betaup\in\Qq_{\Phi}\}=
 \{\betaup\in\Rad\mid \xiup_{\Phi}(\betaup)=0\},\\
 \Qq^{n}_{\Phi}&=\{\betaup\in\Qq_{\Phi}\mid -\betaup\notin\Qq_{\Phi}\}=
 \{\betaup\in\Rad\mid \xiup_{\Phi}(\betaup)<0\} ,\\
  \Qq^{c}_{\Phi}&=\{\betaup\in\Rad\mid -\betaup\in\Qq^{n}_{\Phi}\}=
 \{\betaup\in\Rad\mid \xiup_{\Phi}(\betaup)>0\}.
 \end{aligned}\right.
\end{equation}

We recall (see e.g. \cite[Ch.VIII,\S{3}]{Bou82}):
\begin{itemize}
 \item $\qt^{r}_{\Phi}=\hg\oplus{\sum}_{\betaup\in\Qq^{r}_{\Phi}}\st^{\betaup}$ is a reductive
 complex Lie algebra;
 \item $\qt^{n}_{\Phi}={\sum}_{\betaup\in\Qq^{n}_{\Phi}}\st^{\betaup}$ is the nilradical of $\qt_{\Phi}$; 
 \item $\qt_{\Phi}=\qt^{r}_{\Phi}\oplus\qt^{n}_{\Phi}$ is the Levi-Chevalley decomposition of $\qt_{\Phi}$;
 \item $\qt_{\Phi}^{c}={\sum}_{\betaup\in\Qq^{c}_{\Phi}}\st^{\betaup}$ is a Lie subalgebra of
 $\st$ consisting of $\ad_{\st}$-nilpotent elements;
 \item 
 $\qt^{\vee}_{\Phi}=\qt^{r}_{\Phi}\oplus\qt^{c}_{\Phi}$ is the
 parabolic Lie subalgebra of $\st$ \emph{opposite of $\qt_{\Phi}$},
 decomposed into the direct sum of its reductive subalgebra $\qt^{r}_{\Phi}$ and
its nilradical $\qt^{c}_{\Phi}.$
\end{itemize}
\end{nnott}
\par
\medskip
\subsection{\textsc{Real forms}} \label{s2.2}
Let us take, as we can, $\Sb$ connected and simply connected.
Then
 real automorphisms of its Lie algebra $\st$ lift to automorphisms of the Lie group $\Sb,$
so that real forms $\Sb_{\R}$ of $\Sb$ are in one-to-one correspondence with
the anti-$\C$-linear involutions $\sigmaup$ of 
$\st.$ We will denote by $\st_{\sigmaup}$ the real Lie subalgebra consisting of
the fixed points  
of $\sigmaup$: it is the Lie algebra of the real form $\Sb_{\sigmaup}$ of fixed points of the
lift $\tilde{\sigmaup}$ of $\sigmaup$ to $\Sb.$ 
Its orbits are $CR$ submanifolds $\sfM_{\Phi,\sigmaup}$  of $\sfF_{\Phi}$
whose $CR$ algebra at the base point $\Qf$ 
is the pair~$(\st_{\sigmaup},\qt_{\Phi}).$ 
 \begin{defn}[cf. {\cite[\S{5}]{AMN06}}] 
 A \emph{parabolic $CR$ algebra} is a pair $(\st_{\sigmaup},\qt_{\Phi})$
 consisting of a real semisimple Lie algebra $\st_{\sigmaup}$ and a  
 parabolic complex Lie subalgebra $\qt_{\Phi}$
 of its complexification $\st.$ 
 We say that $(\st_{\sigmaup},\qt_{\Phi})$ is \emph{minimal} if $\sfM_{\Phi,\sigmaup}$
 is  the minimal orbit 
 in $\sfF_{\Phi}$ of the real form $\Sb_{\sigmaup}$ of $\Sb.$ 
\end{defn}
 When $\st_{\sigmaup}$ is not simple, the corresponding 
 orbits $\sfM_{\Phi,\sigmaup}$ are  $CR$ diffeomorphic to a 
 cartesian product 
 of orbits of simple real Lie groups (see e.g. \cite{AMN06b}).If 
\begin{equation}
 \sfM_{\Phi,\sigmaup}\simeq \sfM_{\Phi_{1},\sigmaup_{1}}\times\cdots\times\sfM_{\Phi_{k},\sigmaup_{k}},
\end{equation}
we call each $\sfM_{\Phi_{i},\sigmaup_{i}}$ a \textit{factor} of $\sfM_{\Phi,\sigmaup}.$
\par
 A simple $\st_{\sigmaup}$ is of the \emph{real type} if also $\st$ is simple;
 otherwise, $\st$ is the direct sum of two complex simple Lie algebras
 $\st',\st''$, which are $\R$-isomorphic to
 $\st_{\sigmaup},$ and we say in this case that $\st_{\sigmaup}$ is of the \emph{complex type}.
\par 
 
\par 
\medskip
To list all the orbits of a real form,
one can use the fact 
that the \textit{isotropy subalgebra} $\st_{\sigmaup}{\cap}\,\qt$ 
contains a Cartan subalgebra $\hg_{\R}$ of $\st_{\sigmaup}$
(see e.g. \cite{AMN06b}). By choosing
$\hg$ equal to its
complexification,
we obtain
on $\Rad$  a conjugation  which is compatible with the one
 defined on $\st$ by its real form $\st_{\sigmaup}$ 
 (and which, for simplicity, we still denote by $\sigmaup$).
 Vice versa, an orthogonal involution $\sigmaup$ of $\Rad$ lifts, 
 although  in general not in a unique way, to a
 conjugation of $\st$. The conjugation on $\st$ depends indeed also on the description
 of which roots in $\Rad_{\,\bullet}^{\sigmaup}\,{=}\,\{\betaup\,{\in}\,\Rad\,{\mid}\,\sigmaup(\betaup)\,{=}\,
 {-}\betaup\}$ are \textit{compact}. This is determined by the choice of a \textit{Cartan involution}
 $\thetaup$ on $\st,$ with $\thetaup(\hg)\,{=}\,\hg$ and $\sigmaup{\circ}\thetaup\,{=}\,\thetaup{\circ}
 \sigmaup$, which induces a map, that we will denote by the same symbol, 
\begin{equation}
 \thetaup:\Rad\ni\alphaup\to{-}\sigmaup(\alphaup)\in\Rad.
\end{equation}
We will write for simplicity $\bar{\alphaup}$ instead of $\sigmaup(\alphaup)$ 
and $\Rad_{\,\bullet}$ for $\Rad_{\,\bullet}^{\sigmaup}$
when this will not
cause confusion. 
We recall that 
$\kt_{\sigmaup}\,{=}\,\{X\,{\in}\,\st_{\sigmaup}\,{\mid}\,\thetaup(X){=}X\}$ is a maximal compact
Lie subalgebra of $\st_{\sigmaup}$ and that we have the \emph{Cartan decomposition}
\begin{equation*}
 \st_{\sigmaup}=\kt_{\sigmaup}\oplus\pt_{\sigmaup},\;\;\;\text{with 
 $\pt_{\sigmaup}=\{X\,{\in}\,\st_{\sigmaup}\,{\mid}\,\thetaup(X){=}{-}X\}$}
\end{equation*}
of $\st_{\sigmaup}$. When $\thetaup(\alphaup)\,{=}\alphaup,$ then 
either $\st^{\alphaup}$ is contained in the complexification $\kt$ of $\kt_{\sigmaup}$
(\textit{compact root}) or in the complexification $\pt$ of $\pt_{\sigmaup}$
(\textit{hermitian root}).

 \par\smallskip
 The subalgebras
$\qt_{\Phi}\,{\cap}\,\bar{\qt}_{\Phi},$ $\qt_{\Phi}$ and $\bar{\qt}_{\Phi}$  turn out to be  
direct sums of $\hg$ and root subspaces
 $\st^{\alphaup}$; in particular   $\qt_{\Phi}\,{\cap}\,\bar{\qt}_{\Phi}$ is the direct sum of $\hg$ and
 the root subspaces $\st^{\alphaup}$ with
$\st^{\alphaup}{+}\st^{\bar{\alphaup}}\,{\subset}\,\qt_{\Phi}.$ 

We note that $\qt_{\Phi}\,{\cap}\,\bar{\qt}_{\Phi}$ is a Lie subalgebra of $\st$ and 
$(\qt_{\Phi}\,{+}\,\bar{\qt}_{\Phi})$ is a $(\qt_{\Phi}\,{\cap}\,\bar{\qt}_{\Phi})$-module. 
\par
Having fixed a base $\Bz$ of simple roots of the root system $\Rad$ associated
to $(\st,\hg),$  
the orbit of the real form is determined by the data of: 
\begin{itemize}
 \item a subset $\Phi$ of $\Bz$ specifying the parabolic subalgebra $\qt_{\Phi}$;
 \item a conjugation $\sigmaup$ of $\Rad$;
 \item a splitting $\Rad^{\sigmaup}_{\,\bullet}\,{=}\,\Rad^{\sigmaup}_{\,\bullet,+}
 {\cup}\Rad^{\sigmaup}_{\,\bullet,-}$ of $\Rad^{\sigmaup}_{\,\bullet}$ into a first set
 consisting of the \textit{compact} 
 and a second of the \textit{hermitian} roots.
\end{itemize}
We point out that different choices of $\sigmaup$ may yield the same $CR$ submanifold
$\sfM_{\Phi,\sigmaup}.$ In particular, we can conjugate $\sigmaup$ by any element of
the subgroup of the Weyl group generated by reflections with respect to roots in
$\Bz{\setminus}\Phi.$
\par\medskip
\subsection{\textsc{Contact nondegeneracy for parabolic $CR$ algebras}} 
Since by definition 
simple Lie algebra have no proper nontrivial ideals,
we obtain 
\begin{prop} A real orbit $\sfM_{\Phi,\sigmaup}$ which is fundamental and 
does not have  a totally complex factor is  contact nondegenerate. 
\end{prop}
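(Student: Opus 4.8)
The plan is to combine the ideal characterization of contact nondegeneracy for $CR$ algebras of the contact type, recalled in the preceding subsection, with the semisimplicity of $\st_{\sigmaup}$ and the fact that $\qt_{\Phi}$ and $\bar{\qt}_{\Phi}$ respect the decomposition of the orbit into factors. Writing $\sfM_{\Phi,\sigmaup}\simeq\sfM_{\Phi_{1},\sigmaup_{1}}\times\cdots\times\sfM_{\Phi_{k},\sigmaup_{k}}$ as a product of orbits of simple real Lie groups, we have $\gt_{\R}=\st_{\sigmaup}=\st_{\sigmaup_{1}}\oplus\cdots\oplus\st_{\sigmaup_{k}}$ with the $\st_{\sigmaup_{i}}$ simple real ideals, $\qt_{\Phi}=\bigoplus_{i}\qt_{\Phi_{i}}$, and accordingly $\Ht=\bigoplus_{i}(\qt_{\Phi_{i}}+\bar{\qt}_{\Phi_{i}})$ and $\Ht_{\,\R}=\bigoplus_{i}(\Ht\cap\st_{\sigmaup_{i}})$.

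First I would check that the two hypotheses force $(\st_{\sigmaup},\qt_{\Phi})$ to be of the contact type, which is exactly what licenses the ideal characterization. Being fundamental, it is of complex type or of contact type. Complex type would mean $\qt_{\Phi}+\bar{\qt}_{\Phi}=\st$, equivalently $\st_{\sigmaup}\subseteq\Ht$, equivalently $\st_{\sigmaup_{i}}\subseteq\Ht$ for every $i$ — that is, every factor totally complex, which the hypothesis excludes. Hence $\qt_{\Phi}+\bar{\qt}_{\Phi}\subsetneqq\st$, and the orbit is contact nondegenerate precisely when every ideal $\at$ of $\gt_{\R}$ with $\at\subseteq\Ht_{\,\R}$ satisfies $\at\subseteq\qt_{\Phi}\cap\gt_{\R}$.

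The core of the argument is then the elementary structure of ideals of the semisimple algebra $\gt_{\R}$: any such ideal is the direct sum of the simple ideals $\st_{\sigmaup_{i}}$ that it contains. If $\at\subseteq\Ht_{\,\R}$ and $\st_{\sigmaup_{i}}\subseteq\at$ for some $i$, then $\st_{\sigmaup_{i}}\subseteq\Ht$, which says exactly that the $i$-th factor is of complex type, i.e.\ totally complex, contrary to hypothesis. Therefore no simple ideal can occur in $\at$, forcing $\at=\{0\}$, and in particular $\at\subseteq\qt_{\Phi}\cap\gt_{\R}$. By the characterization this yields that $\sfM_{\Phi,\sigmaup}$ is contact nondegenerate.

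No analytic estimate or chain computation is involved: the whole weight rests on the simplicity of the factor algebras, as the sentence preceding the statement anticipates. The only points to pin down carefully are bookkeeping ones — that the product decomposition really gives $\Ht_{\,\R}=\bigoplus_{i}(\Ht\cap\st_{\sigmaup_{i}})$ compatibly with the decomposition of $\gt_{\R}$ into simple ideals, so that the inclusion $\at\subseteq\Ht_{\,\R}$ can be tested factor by factor, and that \emph{totally complex} is read as \emph{of the complex type}, i.e.\ $\st_{\sigmaup_{i}}\subseteq\Ht$, which is precisely the obstruction a complex-manifold factor would introduce into $\Ht_{\,\R}$ as a nonzero ideal lying outside the isotropy.
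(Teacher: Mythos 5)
Your proof is correct and takes essentially the same route as the paper's: the paper reduces to the case of an effective, simple $\st_{\sigmaup}$ and appeals to the absence of proper nontrivial ideals, which is exactly the content of your factor-by-factor argument combining the ideal characterization of contact nondegeneracy with the fact that every ideal of the semisimple $\gt_{\R}$ is a sum of the simple ideals $\st_{\sigmaup_{i}}$ it contains. You merely spell out the bookkeeping the paper dismisses as straightforward, namely the decomposition of $\qt_{\Phi},$ $\Ht$ and $\Ht_{\,\R}$ along the simple ideals and the observation that $\st_{\sigmaup_{i}}\subseteq\Ht$ forces $\qt_{\Phi_{i}}{+}\bar{\qt}_{\Phi_{i}}\,{=}\,\st_{i},$ i.e.\ a totally complex factor.
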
 
\begin{proof}
 We can indeed reduce to the case where  $(\st_{\sigmaup},\qt_{\Phi})$ is effective
 and  $\st_{\sigmaup}$ is simple, in which the proof is straigthforward. 
\end{proof}
We have the following criterion 
\begin{prop} \label{p2.2}
A real orbit $\sfM_{\Phi,\sigmaup}$ is fundamental iff its
$CR$ algebra $(\st_{\sigmaup},\qt_{\Phi})$ is fundamental. \par 
Let  $(\st_{\sigmaup},\qt_{\Phi})$ be a parabolic $CR$ algebra 
and set 
\begin{equation*}
\Phi^{\sigmaup}_{\circ}=\{\alphaup\in\Phi\mid \sigmaup(\alphaup)\succ 0\}.
 \end{equation*}
If $\Phi^{\sigmaup}_{\circ}\,{=}\,\emptyset,$ then
 $(\st_{\sigmaup},\qt_{\Phi})$ is fundamental.
When $\Phi^{\sigmaup}_{\circ}\,{\neq}\,\emptyset,$ 
 we have 
\begin{itemize}
 \item $(\st_{\sigmaup},\qt_{\Phi})$ is fundamental if and only if 
 $(\st_{\sigmaup},\qt_{\Phi^{\sigmaup}_{\circ}})$ is fundamental;
 \item   $(\st_{\sigmaup},\qt_{\Phi})$ and 
 $(\st_{\sigmaup},\qt_{\Phi^{\sigmaup}_{\circ}})$ are fundamental if and only if 
\begin{equation}\label{eq2.6}
\bar{\Qq}_{\Phi^{\sigmaup}_{\circ}}^{\, c}\cap\Phi^{\sigmaup}_{\circ}=\emptyset.
\end{equation}
\end{itemize}
\end{prop}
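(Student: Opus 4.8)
The plan is to prove the three assertions as a single chain of equivalences, after first reducing the geometric notion of fundamentality to a combinatorial statement about roots. For the first claim I would invoke the standard dictionary between locally homogeneous $CR$ manifolds and $CR$ algebras (see \cite{AMN06,MN05}): the real contact distribution $\Ha\sfM_{\Phi,\sigmaup}$ pulls back to $\Ht_{\,\R}=\Ht\cap\st_{\sigmaup}$, and the sheaf of vector fields it generates corresponds to the Lie subalgebra of $\st_{\sigmaup}$ generated by $\Ht_{\,\R}$; hence $\sfM_{\Phi,\sigmaup}$ is fundamental at its base point if and only if $\Ht_{\,\R}$ generates $\st_{\sigmaup}$, which after complexification is exactly the condition that $\Ht=\qt_{\Phi}+\bar{\qt}_{\Phi}$ generate $\st$, i.e. that $(\st_{\sigmaup},\qt_{\Phi})$ be fundamental. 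Using the root descriptions recalled in \S\ref{s2.2}, $\Ht=\hg\oplus\sum_{\alphaup\in\Qq_{\Phi}\cup\bar{\Qq}_{\Phi}}\st^{\alphaup}$, so fundamentality becomes the statement that $\Qq_{\Phi}\cup\bar{\Qq}_{\Phi}$ generates $\Rad$ under addition of roots.

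The key structural step is this. Since every negative root $\betaup$ satisfies $\xiup_{\Phi}(\betaup)\le0$, we have $\bt^{-}\subseteq\qt_{\Phi}\subseteq\Ht$, so the Lie subalgebra $\lt$ generated by $\Ht$ contains the negative Borel $\bt^{-}$ and is therefore itself a parabolic subalgebra of $\st$, of the form $\qt_{\Psi}$ for a unique $\Psi\subseteq\Bz$. As a parabolic contains a positive root exactly when it contains its whole support, $\Psi$ consists precisely of the crossed simple roots $\alphaup\in\Phi$ that do not occur in the support of any positive root $\gammaup$ with $\sigmaup(\gammaup)\in\Qq_{\Phi}$. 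Consequently $(\st_{\sigmaup},\qt_{\Phi})$ is fundamental if and only if $\Psi=\emptyset$, i.e. every $\alphaup\in\Phi$ lies in the support of some positive root $\gammaup$ with $\xiup_{\Phi}(\sigmaup(\gammaup))\le0$. A crossed root with $\sigmaup(\alphaup)\prec0$ is covered by $\gammaup=\alphaup$ itself; thus only the roots of $\Phi^{\sigmaup}_{\circ}$ can obstruct fundamentality, which already yields the case $\Phi^{\sigmaup}_{\circ}=\emptyset$ and reduces everything to the behaviour of $\Phi^{\sigmaup}_{\circ}$.

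It then remains to identify the covering condition with \eqref{eq2.6}. First I would normalise $\sigmaup$ within its class, using the freedom noted in \S\ref{s2.2} of conjugating by the Weyl subgroup generated by the reflections in the roots of $\Bz\setminus\Phi$ (which does not change $\sfM_{\Phi,\sigmaup}$), so as to put $\sigmaup$ into a standard, suitably dominant form relative to $\Bz\setminus\Phi^{\sigmaup}_{\circ}$. In this normal form I would show that a root $\alphaup\in\Phi^{\sigmaup}_{\circ}$ can be covered by a larger positive root only if it already covers itself, i.e. only if $\xiup_{\Phi^{\sigmaup}_{\circ}}(\sigmaup(\alphaup))\le0$; since $\sigmaup(\alphaup)\succ0$ this reads $\supp(\sigmaup(\alphaup))\cap\Phi^{\sigmaup}_{\circ}=\emptyset$, that is $\alphaup\notin\bar{\Qq}^{\,c}_{\Phi^{\sigmaup}_{\circ}}$. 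Running this over all $\alphaup\in\Phi^{\sigmaup}_{\circ}$ gives exactly \eqref{eq2.6}, and the identical computation carried out for $\Phi^{\sigmaup}_{\circ}$ in place of $\Phi$ shows that both $(\st_{\sigmaup},\qt_{\Phi})$ and $(\st_{\sigmaup},\qt_{\Phi^{\sigmaup}_{\circ}})$ are fundamental under the very same condition, which simultaneously establishes the equivalence of the two fundamentality statements.

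The hard part, which I expect to absorb most of the work, is precisely this last reduction \emph{coverable $\Rightarrow$ self-coverable}. The difficulty is that, a priori, a positive root $\gammaup$ with $\sigmaup(\gammaup)\prec0$ may contain a crossed root $\alphaup$ in its support and so remove $\alphaup$ from $\Psi$---covering it---even when $\alphaup$ does not cover itself; ruling this out is what forces both the involutivity of $\sigmaup$ and the choice of a good representative in its Weyl orbit. Concretely, I would argue that in the normalised form the contributions of the roots of $\Bz\setminus\Phi^{\sigmaup}_{\circ}$ to $\sigmaup(\gammaup)$ cannot cancel the positive $\Phi^{\sigmaup}_{\circ}$-part produced by $\alphaup$, so that $\xiup_{\Phi^{\sigmaup}_{\circ}}(\sigmaup(\gammaup))>0$ whenever $\alphaup\in\supp(\gammaup)\cap\Phi^{\sigmaup}_{\circ}$ and $\alphaup$ fails to cover itself; this monotonicity is the crux, and is where the fine structure of $\sigmaup$ genuinely enters.
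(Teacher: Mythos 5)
Your reduction of fundamentality to root combinatorics and your structural step (the subalgebra of $\st$ generated by $\qt_{\Phi}+\bar{\qt}_{\Phi}$ contains a Borel, hence is a parabolic $\qt_{\Psi}$, with $\Psi$ characterized by your covering condition) are sound, and they run parallel to the paper's own observation that any Lie subalgebra containing $\qt_{\Phi}$ is of the form $\qt_{\Psi}$ with $\Psi\subseteq\Phi$; your disposal of the case $\Phi^{\sigmaup}_{\circ}=\emptyset$ also matches the paper's. But the heart of the proposition --- the equivalence of fundamentality with \eqref{eq2.6} --- is precisely what you do not prove: your implication ``coverable $\Rightarrow$ self-coverable'' is announced as the crux, to be extracted from an unspecified normalization of $\sigmaup$ and an unproved monotonicity estimate, and you say yourself that this is where the work lies. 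That is a genuine gap, not a deferred verification. The paper needs none of this machinery: after reducing to the case $\Phi=\Phi^{\sigmaup}_{\circ}$ (legitimate once the first item is known), sufficiency of \eqref{eq2.6} is immediate because the condition then says that every simple root lies in $\Qq_{\Phi}\cup\bar{\Qq}_{\Phi}$, and necessity is a two-line exhibition of an obstruction: for $\alphaup\in\bar{\Qq}^{\,c}_{\Phi}\cap\Phi$ the proper parabolic $\qt_{\{\alphaup\}}$ contains both $\qt_{\Phi}$ and $\bar{\qt}_{\Phi}$, so $\qt_{\Phi}+\bar{\qt}_{\Phi}$ generates a proper subalgebra. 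No normal form of $\sigmaup$ enters anywhere.

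The normalization plan is moreover worse than incomplete. Conjugating $\sigmaup$ by elements of the Weyl subgroup generated by the reflections in $\Bz\setminus\Phi$ changes the set $\Phi^{\sigmaup}_{\circ}$ and the condition \eqref{eq2.6} themselves, since both are defined in terms of the given $\sigmaup$ (in low-rank examples a Weyl-conjugate representative of the same orbit can even have $\Phi^{\sigmaup}_{\circ}=\emptyset$ while the original does not). So the statement you would prove after normalizing is not literally the statement to be proved, and you would first owe an invariance argument for \eqref{eq2.6} under such conjugations --- an argument you never supply, and whose only evident source is the proposition itself, making the plan circular. Finally, by deriving the first item as a byproduct of the same unproven computation, you leave its nontrivial direction (fundamentality of $(\st_{\sigmaup},\qt_{\Phi^{\sigmaup}_{\circ}})$ implies that of $(\st_{\sigmaup},\qt_{\Phi})$) without an independent proof; the paper gets it cheaply from the fact that the $\sigmaup$-stable parabolic $\qt_{\Psi}$ generated by $\qt_{\Phi}+\bar{\qt}_{\Phi}$ satisfies $\Psi\subseteq\Phi^{\sigmaup}_{\circ}$, hence contains $\qt_{\Phi^{\sigmaup}_{\circ}}+\bar{\qt}_{\Phi^{\sigmaup}_{\circ}}$ --- an argument fully within reach of your structural step, but which you do not make.
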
 
\begin{proof} If $\Phi^{\sigmaup}_{\circ}\,{=}\,\emptyset,$ then
$\Bz\,{\subseteq}\,\Qq_{\Phi}\,{\cup}\,\bar{\Qq}_{\Phi}$ and hence $(\st_{\sigmaup},\qt_{\Phi})$
is trivially fundamental. Let us consider next the case where $\Phi^{\sigmaup}_{\circ}\,{\neq}\,\emptyset.$ 
\par
Since $\Phi^{\sigmaup}_{\circ}\,{\subseteq}\,\Phi,$ we have 
$\qt_{\Phi}\,{\subseteq}\,\qt_{\Phi^{\sigmaup}_{\circ}}$ and therefore 
$(\st_{\sigmaup},\qt_{\Phi^{\sigmaup}_{\circ}})$ is fundamental when $(\st_{\sigmaup},\qt_{\Phi})$
is fundamental. To show the
vice versa, we note that
any Lie subalgebra of $\st$ containing $\qt_{\Phi}$ is of the form
$\qt_{\Psi}$ for some $\Psi\,{\subseteq}\,\Phi.$ If it contains $\qt_{\Phi}{+}\bar{\qt}_{\Phi},$
then $\Psi\,{\subseteq}\,\Phi^{\sigmaup}_{\circ}.$ This proves the first item.
\par
It suffices to prove the second item in the case where 
$\Phi
\,{=}\,\Phi^{\sigmaup}_{\circ}.$ 
Then condition \eqref{eq2.6} is  equivalent to the fact that each $\alphaup\,{\in}\,\Bz$
belongs either to $\Qq_{\Phi}$ or to 
 $\bar{\Qq}_{\Phi}$ and is therefore clearly sufficient for
  $(\st_{\sigmaup},\qt_{\Phi})$ being fundamental. Vice versa, when this condition
  is not satisfied, we can pick  
  $\alphaup\,{\in}\, \bar{\Qq}^{\,c}_{\Phi}{\cap}\Phi.$
  Then $\qt_{\{\alphaup\}}$ is a proper
  parabolic subalgebra of $\st$ containing both $\qt_{\Phi}$
  and $\bar{\qt}_{\Phi}.$ Therefore 
  $\qt_{\Phi}{+}\,\bar{\qt}_{\Phi}$ generates a proper 
  Lie subalgebra of $\st$ and hence  $(\st_{\sigmaup},\qt_{\Phi})$
  is not fundamental.
 This completes the proof.
\end{proof}
\begin{exam} 
Fix $n{\geq}3.$ The cross-marked Dynkin diagram 
\begin{equation*} 
    \xymatrix@R=-.3pc{
    \alphaup_{1} & \alphaup_{2} && \alphaup_{k}&
    & \alphaup_{n-1}&\alphaup_{n}\\
 \!\!\medcirc\!\!\ar@{-}[r]
&\!\!\medcirc\!\!\ar@{-}[r]
&\! \cdots\! 
&\!\!\medcirc\!\!\ar@{-}[r]\ar@{-}[l]&\!\cdots\!\ar@{-}[r]
&\!\!\medcirc\!\! \ar@{-}[r]
&\!\!\medcirc\!\!
\\
  &\times &&\times&&\times}
\end{equation*}
describes the flag manifold $\sfF_{\Phi}$ of $\SL_{n+1}(\C)$ consisting of
flags $$\ell_{2}\,{\subset}\,\ell_{3}\,{\subset}\,\cdots\,{\subset}\ell_{n-2}\,{\subset}\,\ell_{n-1},$$
where $\ell_{d}$ is a $d$-dimensional linear subspace of $\C^{n+1}.$ Here
$$\Rad\,{=}\,\{{\pm}(e_{i}{-}e_{j})\,{\mid}\, 1{\leq}i{\leq}n{+}1\},\;\;\text{$\alphaup_{i}{=}e_{i}{-}e_{i+1}$
and \;\;$\Phi\,{=}\,\{\alphaup_{i}\,{\mid}\, 2\,{\leq}\,i\,{\leq}\,n{-}1\}$}.
$$
We consider the conjugation $\sigmaup$ defined by 
\begin{equation*}
 \sigmaup(e_{1})={-}e_{n+1},\;\; \sigmaup(e_{i})={-}e_{i},\; \text{for $1{<}i{\leq}n$},\;\; \sigmaup(e_{n+1})={-}e_{1}.
\end{equation*}
Then $(\st_{\sigmaup},\qt_{\Phi})$  is contact nondegenerate of order $[(n{-}1)/2].$ 
It is 
weakly nondegenerate  
for $n\,{=}\,3,4$ and holomorphically degenerate for $n{\geq}5.$ 
\end{exam}

\subsection{\textsc{Conditions for weak  nondegeneracy}}
To discuss weak nondegeneracy, we observe that
the terms of the chain \eqref{e1.1} for $(\st_{\sigmaup},\qt_{\Phi})$ can be described by 
the combinatorics of the root system. We recall that the chain~is 
\begin{gather*}
 \qt_{\Phi}^{(0)}\supseteq\qt_{\Phi}^{(1)}\supseteq\cdots\supseteq\qt_{\Phi}^{(\pq)}
 \supseteq\qt_{\Phi}^{(\pq+1)}\supseteq\cdots \\
\text{with}\;\;\qt_{\Phi}^{(0)}=\qt_{\Phi}\;\;\text{and}\;\; \qt_{\Phi}^{(\pq)}
=\{Z\in\qt_{\Phi}^{(\pq-1)}\mid [Z,\bar{\qt}_{\Phi}]\subseteq
\qt_{\Phi}^{\pq-1}+\bar{\qt}_{\Phi}\}\;\;\text{for $\pq{\geq}1.$}
\end{gather*}
Each $\qt_{\Phi}^{(\pq)}$ in the chain is the direct sum of $\hg$ and root spaces $\st^{\alphaup}.$ 
Let us set 
\begin{equation}
 \Qq_{\Phi}^{\,\pq}=\{\alphaup\in\Rad\mid \st^{\alphaup}\subseteq\qt_{\Phi}^{(\pq)}\}, \;\;\text{so that}\;\;
 \qt_{\Phi}^{(\pq)}=\hg\oplus{\sum}_{\alphaup\in\Qq_{\Phi}^{\pq}}\st^{\alphaup}.
\end{equation}
\par 
With the notation of \S\ref{sub2.1}, we have 
 $\Qq_{\Phi}^{0}=\Qq_{\Phi}$ and 
\begin{equation} 
\begin{cases}
 \Qq_{\Phi}^{1}=\{\alphaup\in\Qq_{\Phi}\mid (\alphaup+\bar{\Qq}_{\Phi})
 \cap\Rad\subseteq \Qq_{\Phi}+\bar{\Qq}_{\Phi}\},\\
 \Qq_{\Phi}^{\,\pq}=\{\alphaup\in\Qq_{\Phi}^{\,\pq-1}\mid 
 (\alphaup+\bar{\Qq}_{\Phi})\cap\Rad\subseteq \Qq_{\Phi}^{\,\pq-1}+\bar{\Qq}_{\Phi}\},\;\;\text{for $\pq{>}1.$}
\end{cases}
\end{equation}
This yields a characterization of weak nondegeneracy in terms of roots:
\begin{prop}
 A necessary and sufficient condition for $(\st_{\sigmaup},\qt_{\Phi})$ being weakly  
 nondegenerate of Levi order $\qq$
 is that $\Qq_{\Phi}^{\,\qq-1}{\neq}\,\Qq_{\Phi}^{\,\qq}\,{=}\,\Qq_{\Phi}\,{\cap}\,\bar{\Qq}_{\Phi}.$ \qed
\end{prop}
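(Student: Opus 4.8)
The plan is to reduce the Lie-theoretic condition defining weak nondegeneracy of Levi order $\qq$, namely $\qt_\Phi^{(\qq-1)}\supsetneq\qt_\Phi^{(\qq)}=\qt_\Phi\cap\bar{\qt}_\Phi$, to the stated equality and inequality of root sets. The decisive structural input is that every term of the descending Levi chain is spanned by $\hg$ together with root spaces; granting this, the chain is faithfully encoded by the subsets $\Qq_\Phi^\pq\subseteq\Rad$, and the proposition is just a transcription under the inclusion-preserving bijection between $\ad(\hg)$-stable subspaces containing $\hg$ and subsets of $\Rad$.

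So first I would prove by induction on $\pq$ that $\qt_\Phi^{(\pq)}$ is $\ad(\hg)$-stable, hence of the form $\hg\oplus\sum_{\alphaup\in\Qq_\Phi^\pq}\st^\alphaup$. The base case is \eqref{eq1.22}. Since $\hg$ is chosen with $\sigmaup(\hg)=\hg$, both $\qt_\Phi^{(\pq-1)}$ (by the inductive hypothesis) and $\bar{\qt}_\Phi=\sigmaup(\qt_\Phi)$ are $\ad(\hg)$-stable. For $Z\in\qt_\Phi^{(\pq)}$ and $H\in\hg$ the Jacobi identity gives
\begin{equation*}
 [[H,Z],\bar{\qt}_\Phi]\subseteq[H,[Z,\bar{\qt}_\Phi]]+[Z,[H,\bar{\qt}_\Phi]]\subseteq[H,\qt_\Phi^{(\pq-1)}+\bar{\qt}_\Phi]+[Z,\bar{\qt}_\Phi]\subseteq\qt_\Phi^{(\pq-1)}+\bar{\qt}_\Phi,
\end{equation*}
using $[Z,\bar{\qt}_\Phi]\subseteq\qt_\Phi^{(\pq-1)}+\bar{\qt}_\Phi$; since also $[H,Z]\in\qt_\Phi^{(\pq-1)}$, this shows $[H,Z]\in\qt_\Phi^{(\pq)}$. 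As the root spaces are one-dimensional, $\ad(\hg)$-stability forces the asserted grading.

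With the grading in hand I would extract the recursion for $\Qq_\Phi^\pq$ by testing a single root vector $\st^\alphaup$ with $\alphaup\in\Qq_\Phi^{\pq-1}$: one has $\st^\alphaup\subseteq\qt_\Phi^{(\pq)}$ iff $[\st^\alphaup,\bar{\qt}_\Phi]\subseteq\qt_\Phi^{(\pq-1)}+\bar{\qt}_\Phi$. Decomposing $[\st^\alphaup,\bar{\qt}_\Phi]$ into $[\st^\alphaup,\hg]=\st^\alphaup\subseteq\qt_\Phi^{(\pq-1)}$ and the brackets $[\st^\alphaup,\st^\betaup]$ for $\betaup\in\bar{\Qq}_\Phi$, and using that these live in pairwise distinct root spaces, the condition decouples into: for every $\betaup\in\bar{\Qq}_\Phi$ with $\alphaup+\betaup\in\Rad$ one has $\alphaup+\betaup\in\Qq_\Phi^{\pq-1}\cup\bar{\Qq}_\Phi$, i.e. $(\alphaup+\bar{\Qq}_\Phi)\cap\Rad\subseteq\Qq_\Phi^{\pq-1}+\bar{\Qq}_\Phi$ (the right-hand side being the root support of $\qt_\Phi^{(\pq-1)}+\bar{\qt}_\Phi$). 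The case $\betaup=-\alphaup$ is harmless since $[\st^\alphaup,\st^{-\alphaup}]\subseteq\hg$.

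Finally I would combine these facts with the identity $\qt_\Phi\cap\bar{\qt}_\Phi=\hg\oplus\sum_{\alphaup\in\Qq_\Phi\cap\bar{\Qq}_\Phi}\st^\alphaup$ recorded in \S\ref{s2.2}: under the inclusion-preserving dictionary, $\qt_\Phi^{(\qq-1)}\supsetneq\qt_\Phi^{(\qq)}$ becomes $\Qq_\Phi^{\qq-1}\neq\Qq_\Phi^\qq$ and $\qt_\Phi^{(\qq)}=\qt_\Phi\cap\bar{\qt}_\Phi$ becomes $\Qq_\Phi^\qq=\Qq_\Phi\cap\bar{\Qq}_\Phi$, which is exactly the claim. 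I expect no real obstacle; the one step that demands care is the inductive grading lemma, on which everything rests, together with the use of $[\st^\alphaup,\st^\betaup]=\st^{\alphaup+\betaup}\neq\{0\}$ for $\alphaup+\betaup\in\Rad$ --- it is this nonvanishing that makes the combinatorial condition necessary and not merely sufficient.
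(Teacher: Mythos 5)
Your proof is correct and follows exactly the route the paper takes: the paper states this proposition with an immediate \qed, treating it as a direct transcription once it has asserted (without detail) that each $\qt_{\Phi}^{(\pq)}$ is the direct sum of $\hg$ and root spaces and displayed the resulting recursion for $\Qq_{\Phi}^{\,\pq}$. You have merely supplied the details the paper leaves implicit --- the inductive $\ad(\hg)$-stability argument, the decoupling over one-dimensional root spaces, and the nonvanishing $[\st^{\alphaup},\st^{\betaup}]=\st^{\alphaup+\betaup}$ giving necessity --- all of which are sound.
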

\begin{rmk}
 The necessary and sufficient condition for $(\st_{\sigmaup},\qt_{\Phi})$ being
 weakly  nondegenerate 
is that (cf. \cite[Lemma 12.1]{AMN06})
\begin{equation}\label{eq1.29}\begin{cases}
 \forall \betaup\,{\in}\,\Qq_{\Phi}{\backslash}\bar{\Qq}_{\Phi},\;\exists\, 
 k\in\Z_{+},\; \exists\,\alphaup_{1},\hdots,\alphaup_{k}\,{\in}\,\bar{\Qq}_{\Phi}\;\;
 \text{s.t.} \\
  \gammaup_{h}\,{=}\,\betaup{+}{\sum}_{i=1}^{h}\alphaup_{i}\,{\in}\,\Rad,\;\forall 1{\leq} h {\leq} k\;\;
 \gammaup_{k}\notin\Qq_{\Phi}\cup\bar{\Qq}_{\Phi}.\end{cases}
\end{equation}\end{rmk}
\begin{defn} For any root $\betaup\,{\in}\,\Qq_{\Phi}{\backslash}\bar{\Qq}_{\Phi}$ 
we denote by $\qq_{\Phi}^{\sigmaup}(\betaup)$ and call its \emph{Levi order}
 the smallest number $k$ for which \eqref{eq1.29} is valid. We put
 $\qq_{\Phi}^{\sigmaup}(\betaup)\,{=}\,{+}\infty$ when \eqref{eq1.29} is not valid for any positive integer $k.$ 
\end{defn}
\begin{lem} \label{l1.10}
Assume that 
 $\betaup\,{\in}\,\Qq_{\Phi}{\backslash}\bar{\Qq}_{\Phi}$ has finite Levi order  
 $\qq_{\Phi}^{\sigmaup}(\betaup){=}\qq$
and \eqref{eq1.29} is satisfied for a sequence 
$\alphaup_{1},\hdots,\alphaup_{\qq}.$
Then 
\begin{itemize}
\item[$(i)$]
 $\alphaup_{i}\in\bar{\Qq}_{\Phi}{\backslash}\Qq_{\Phi}$ for all $1{\leq}i{\leq}\qq$;
 \item[$(ii)$] $\betaup+{\sum}_{i{\leq}h}\alphaup_{i}\in\Qq_{\Phi}{\backslash}\bar{\Qq}_{\Phi}$
 for all $h{<}\qq$;
 \item[$(iii)$] \eqref{eq1.29} is satisfied by all permutations of 
 $\alphaup_{1},\hdots,\alphaup_{\qq}$;
 \item[$(iv)$] $\alphaup_{i}{+}\alphaup_{j}\,{\notin}\,\Rad$ for all $1{\leq}i{<}j{\leq}\qq.$ 
\end{itemize}
\end{lem}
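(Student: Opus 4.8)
The plan is to read everything off from the two linear functionals $\xiup_{\Phi}$ and $\xiup_{\Phi}{\circ}\sigmaup$ on the real span of $\Rad$, using that $\gammaup\in\Qq_{\Phi}$ iff $\xiup_{\Phi}(\gammaup)\le 0$ and $\gammaup\in\bar{\Qq}_{\Phi}$ iff $\xiup_{\Phi}(\bar{\gammaup})\le 0$. Write $\gammaup_{h}=\betaup+{\sum}_{i\le h}\alphaup_{i}$, so that $\gammaup_{0}=\betaup$ and $\gammaup_{\qq}=\gammaup_{k}$. Minimality of $\qq=\qq_{\Phi}^{\sigmaup}(\betaup)$ forces $\gammaup_{h}\in\Qq_{\Phi}\cup\bar{\Qq}_{\Phi}$ for every $h<\qq$, since otherwise $\alphaup_{1},\dots,\alphaup_{h}$ would already verify \eqref{eq1.29} with $k=h$. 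For $(ii)$ I would observe that, because each $\alphaup_{i}\in\bar{\Qq}_{\Phi}$ satisfies $\xiup_{\Phi}(\bar{\alphaup}_{i})\le 0$, the sequence $h\mapsto\xiup_{\Phi}(\bar{\gammaup}_{h})$ is nonincreasing; since its last value $\xiup_{\Phi}(\bar{\gammaup}_{\qq})$ is strictly positive (as $\gammaup_{\qq}\notin\bar{\Qq}_{\Phi}$), every $\xiup_{\Phi}(\bar{\gammaup}_{h})>0$, i.e.\ $\gammaup_{h}\notin\bar{\Qq}_{\Phi}$ for all $h$. Combined with the previous line this gives $\gammaup_{h}\in\Qq_{\Phi}\setminus\bar{\Qq}_{\Phi}$ for $h<\qq$, which is $(ii)$.

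Next I would establish $(iv)$ for \emph{adjacent} indices: if $\alphaup_{h}+\alphaup_{h+1}\in\Rad$ then, since $\bar{\Qq}_{\Phi}$ is again a parabolic set and hence closed under addition of roots, $\alphaup_{h}+\alphaup_{h+1}\in\bar{\Qq}_{\Phi}$; replacing the two steps $\alphaup_{h},\alphaup_{h+1}$ by the single step $\alphaup_{h}+\alphaup_{h+1}$ then yields a sequence whose partial sums are exactly the $\gammaup$'s with $\gammaup_{h}$ omitted---still all roots, still with steps in $\bar{\Qq}_{\Phi}$ and the same terminal root $\gammaup_{\qq}\notin\Qq_{\Phi}\cup\bar{\Qq}_{\Phi}$---contradicting the minimality of $\qq$. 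To prove $(iii)$ I would use an adjacent-transposition argument: every permutation is a product of adjacent transpositions and the endpoint $\gammaup_{\qq}$ is permutation-invariant, so it suffices to show that interchanging $\alphaup_{h}$ and $\alphaup_{h+1}$ keeps all partial sums roots, i.e.\ that $\gammaup_{h-1}+\alphaup_{h+1}\in\Rad$. This reduces to the purely root-theoretic claim: \emph{if $\deltaup,\deltaup+a,\deltaup+a+b\in\Rad$ and $a+b\notin\Rad$, then $\deltaup+b\in\Rad$.} Granting this with $\deltaup=\gammaup_{h-1}$, $a=\alphaup_{h}$, $b=\alphaup_{h+1}$ (the hypothesis $a+b\notin\Rad$ being the adjacent case of $(iv)$ just proved), each swap produces another length-$\qq$ realization of \eqref{eq1.29}, to which all the structural facts above apply verbatim; hence swaps may be iterated, giving $(iii)$.

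Item $(i)$ and the general case of $(iv)$ then follow formally. For $(iv)$, given $i<j$ I would use $(iii)$ to reorder the sequence so that $\alphaup_{i},\alphaup_{j}$ become adjacent and apply the adjacent case. For $(i)$, fix $i$, use $(iii)$ to move $\alphaup_{i}$ into last position, and apply $(ii)$ to the reordered sequence: this gives $\betaup+{\sum}_{l\ne i}\alphaup_{l}\in\Qq_{\Phi}$, hence $\xiup_{\Phi}\bigl(\betaup+{\sum}_{l\ne i}\alphaup_{l}\bigr)\le 0$, while $\xiup_{\Phi}(\gammaup_{\qq})>0$; subtracting yields $\xiup_{\Phi}(\alphaup_{i})>0$, so $\alphaup_{i}\notin\Qq_{\Phi}$, and with $\alphaup_{i}\in\bar{\Qq}_{\Phi}$ this is $(i)$. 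The crux of the whole argument is the swap claim. Its easy half is that $a+b\notin\Rad$ forces $(a,b)\ge 0$; turning this into $\deltaup+b\in\Rad$ amounts to showing that the $a$-string through $\deltaup+a+b$ reaches down to $\deltaup+b$, which is immediate from $\langle\deltaup,a^{\vee}\rangle\ge -1$ but genuinely delicate in the long-string configurations ($\langle\deltaup,a^{\vee}\rangle\le -2$, so only types $B,C,F,G$). I expect these remaining cases to be eliminated using the membership constraints $\deltaup,\deltaup+a\in\Qq_{\Phi}\setminus\bar{\Qq}_{\Phi}$ and $a,b\in\bar{\Qq}_{\Phi}$ supplied by $(ii)$, and it is here that the real work of the lemma lies.
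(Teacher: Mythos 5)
Your items $(ii)$, the adjacent case of $(iv)$, and the formal deductions of $(i)$ and of the general case of $(iv)$ from $(iii)$ are correct, and they agree in substance with the paper's arguments (the paper gets $(ii)$ from the parabolic closure of $\bar{\Qq}_{\Phi}$ rather than from monotonicity of $\xiup_{\Phi}{\circ}\,\sigmaup$, and proves $(iv)$ by the same substitution trick you use). The genuine gap is exactly where you admit "the real work of the lemma lies": item $(iii)$ is never proved. You reduce it to the root-string claim that $\deltaup,\deltaup{+}a,\deltaup{+}a{+}b\in\Rad$ and $a{+}b\notin\Rad$ imply $\deltaup{+}b\in\Rad$, observe that the easy estimate only covers $\langle\deltaup,a^{\vee}\rangle\geq{-}1$, and leave the long-string configurations in types $\mathrm{B,C,F,G}$ open. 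Worse, the claim as stated is false: in $\mathrm{B}_{2}$ take $\deltaup={-}e_{1}{-}e_{2}$, $a=e_{1}$, $b=e_{1}{+}e_{2}$; then $\deltaup$, $\deltaup{+}a={-}e_{2}$, $\deltaup{+}a{+}b=e_{1}$ are roots and $a{+}b=2e_{1}{+}e_{2}\notin\Rad$, yet $\deltaup{+}b=0\notin\Rad$. The correct conclusion can only be $\deltaup{+}b\in\Rad\cup\{0\}$; the zero case must then be excluded by the membership constraints (it can be: $b={-}\deltaup$ forces $\gammaup_{h+1}=\alphaup_{h}\in\bar{\Qq}_{\Phi}$, contradicting your $(ii)$), and the nonzero cases with $\langle\deltaup,a^{\vee}\rangle\leq{-}2$ still require the type-by-type verification your proposal defers. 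So as it stands the proposal establishes the easy items while leaving the crux conditional on an unproved (and, as stated, incorrect) lemma.

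The paper sidesteps this entire analysis by working in a Chevalley basis: \eqref{eq1.29} is equivalent to $[Z_{\alphaup_{\qq}},\hdots,Z_{\alphaup_{1}},Z_{\betaup}]\notin\qt_{\Phi}{+}\bar{\qt}_{\Phi}$, and interchanging two adjacent entries changes this iterated bracket by $[Z_{\alphaup_{\qq}},\hdots,[Z_{\alphaup_{i+1}},Z_{\alphaup_{i}}],\hdots,Z_{\betaup}]$, which is an iterated bracket of $Z_{\betaup}$ with only $\qq{-}1$ elements of $\bar{\qt}_{\Phi}$ (parabolic closure gives $[Z_{\alphaup_{i+1}},Z_{\alphaup_{i}}]\in\bar{\qt}_{\Phi}$, also when $\alphaup_{i}{+}\alphaup_{i+1}=0$, since then the bracket lies in $\hg\subseteq\bar{\qt}_{\Phi}$); by the minimality of $\qq$ this difference lies in $\qt_{\Phi}{+}\bar{\qt}_{\Phi}$, so the swapped word still violates containment. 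In that formulation the requirement that all partial sums of the permuted sequence be roots takes care of itself, because a degenerate partial sum would force the whole bracket into $\bar{\qt}_{\Phi}$ --- precisely the point your combinatorial route has to fight by hand. To complete your proof you must either carry out the deferred case analysis with the corrected claim $\deltaup{+}b\in\Rad\cup\{0\}$, or switch to the bracket identity, which disposes of $(iii)$ in three lines.
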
 
\begin{proof} 
Let us first prove $(ii).$  
With the notation in \eqref{eq1.29}, we observe that $\gammaup_{h}\,{\notin}\,\bar{\Qq}_{\Phi}$
for $h{<}\qq,$
because, otherwise, $\gammaup_{\qq}\,{\in}\,\bar{\Qq}_{\Phi}.$
\par
Next we prove $(iii).$ 
Let $\{Z_{\alphaup}\}_{\alphaup\in\Rad}\,{\cup}\,\{H_{i}\,{\in}\,\hg\,{\mid}\,1{\leq}i{\leq}\ell\}$ 
be a Chevalley basis for $(\st,\hg).$ Then \eqref{eq1.29} is equivalent to the fact that 
\begin{equation*} [Z_{\alphaup_{\qq}},
Z_{\alphaup_{\qq-1}},\hdots,Z_{\alphaup_{1}},Z_{\betaup}]
\coloneqq
 [Z_{\alphaup_{\qq}},[Z_{\alphaup_{\qq-1}},[\hdots,[Z_{\alphaup_{1}},Z_{\betaup}]\hdots]]]\notin \qt_{\Phi}
 +\bar{\qt}_{\Phi}.
\end{equation*}
The item  $(iii)$  follows because 
\begin{align*}
[Z_{\alphaup_{\qq}},\hdots, Z_{\alphaup_{i+1}},Z_{\alphaup_{{i}}},\hdots,Z_{\alphaup_{1}},Z_{\betaup}]
-[Z_{\alphaup_{\,\qq}},\hdots, Z_{\alphaup_{i}},Z_{\alphaup_{{i+1}}},\hdots,Z_{\alphaup_{1}},Z_{\betaup}]\qquad\\
=[Z_{\alphaup_{\,\qq}},\hdots, [Z_{\alphaup_{i+1}},Z_{\alphaup_{{i}}}],\hdots,Z_{\alphaup_{1}},Z_{\betaup}]
\end{align*}
and, by the minimality assumption, the right hand side belongs to $\qt_{\Phi}{+}\bar{\qt}_{\Phi}.$ 
\par 
Let us  prove 
$(i)$ by contradiction.
If $\alphaup_{i}\,{\in}\,\Qq_{\Phi}\,{\cap}\,\bar{\Qq}_{\Phi}$ for some $1{\leq}i{\leq}\qq,$
then we could assume by $(iii)$ that it was $\alphaup_{\qq}.$ Then 
\begin{equation*}
 [Z_{\alphaup_{\qq-1}},\hdots,Z_{\alphaup_{1}},Z_{\betaup}]\in\qt_{\Phi}+\bar{\qt}_{\Phi}
 \Longrightarrow 
  [Z_{\alphaup_{\qq}},Z_{\alphaup_{\qq-1}},\hdots,Z_{\alphaup_{1}},Z_{\betaup}] 
  \in \qt_{\Phi}+\bar{\qt}_{\Phi}
\end{equation*}
 yields the contradiction. Also $(iv)$ is an easy consequence of $(iii),$ because
if $\alphaup_{i}{+}\alphaup_{j}$ ($1{\leq}i,j{\leq}\qq$)
is a root, than it would belong to $\bar{\Qq}_{\Phi}\,{\cap}\, \Qq_{\Phi}^{c}$ 
and, by substituting to the two roots $\alphaup_{i},\;\alphaup_{j}$ the single root
$\alphaup_{i}{+}\alphaup_{j}$ we would obtain a sequence satisfying \eqref{eq1.29}
and containing $\qq{-}1$ terms. \par
The proof is complete.
\end{proof}  
\begin{rmk}
Since $\xiup_{\Phi}(\alphaup){\geq}1$ for all $\alphaup\,{\in}\Qq^{\,c}_{\Phi},$ 
 if $\betaup\,{\in}\,\Qq_{\Phi}\,{\cap}\,\bar{\Qq}^{\,c}_{\Phi}$ and
 $\qq^{\sigmaup}_{\Phi}(\betaup){<}{+}\infty,$ then 
\begin{equation}\label{eq2.10}
 \qq^{\sigmaup}_{\Phi}(\betaup)\leq 1-\xiup_{\Phi}(\betaup).
\end{equation}
\end{rmk}
\begin{cor}\label{cor1.11}
 If $\betaup\,{\in}\, \Qq^{\,r}_{\Phi}{\backslash}\bar{\Qq}_{\Phi},$ 
 then its Levi order  is either one or~${+}\infty.$\qed
\end{cor}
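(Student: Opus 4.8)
The plan is to reduce everything to the valuation estimate of \eqref{eq2.10}, the point being that belonging to $\Qq^{r}_{\Phi}$ is exactly the condition $\xiup_{\Phi}(\betaup)=0$. First I would record two translations between root-set membership and values of the grading functional $\xiup_{\Phi}$. Since $\Qq^{r}_{\Phi}=\{\gammaup\in\Rad\mid\xiup_{\Phi}(\gammaup)=0\}$, the hypothesis $\betaup\in\Qq^{r}_{\Phi}$ says precisely that $\xiup_{\Phi}(\betaup)=0$, so in particular $\betaup\in\Qq_{\Phi}$. Moreover, as $\bar{\Qq}_{\Phi}$ is the $\sigmaup$-image of $\Qq_{\Phi}$, its complement in $\Rad$ is exactly $\bar{\Qq}^{c}_{\Phi}$, so that $\betaup\notin\bar{\Qq}_{\Phi}$ reads $\betaup\in\bar{\Qq}^{c}_{\Phi}$. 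Together these place $\betaup\in\Qq_{\Phi}\cap\bar{\Qq}^{c}_{\Phi}$, which is precisely the situation in which \eqref{eq2.10} applies.

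Granting this, the conclusion is immediate. If $\qq^{\sigmaup}_{\Phi}(\betaup)$ is finite, then \eqref{eq2.10} gives $\qq^{\sigmaup}_{\Phi}(\betaup)\leq 1-\xiup_{\Phi}(\betaup)=1$; since a finite Levi order is by definition a positive integer, this forces $\qq^{\sigmaup}_{\Phi}(\betaup)=1$. Hence the only possible values are $1$ and $+\infty$, as claimed.

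Should one prefer an argument that does not quote the Remark, I would run the same count directly against Lemma \ref{l1.10}. Suppose, for contradiction, that $\qq:=\qq^{\sigmaup}_{\Phi}(\betaup)$ is finite and $\qq\geq 2$, and fix an admissible sequence $\alphaup_{1},\dots,\alphaup_{\qq}$. By item $(i)$ each $\alphaup_{i}\in\bar{\Qq}_{\Phi}\setminus\Qq_{\Phi}\subseteq\Qq^{c}_{\Phi}$, so $\xiup_{\Phi}(\alphaup_{i})\geq 1$. Then the first partial sum $\gammaup_{1}=\betaup+\alphaup_{1}$ satisfies $\xiup_{\Phi}(\gammaup_{1})=\xiup_{\Phi}(\betaup)+\xiup_{\Phi}(\alphaup_{1})=\xiup_{\Phi}(\alphaup_{1})\geq 1$, whence $\gammaup_{1}\notin\Qq_{\Phi}$; but item $(ii)$, applied with $h=1<\qq$, demands $\gammaup_{1}\in\Qq_{\Phi}\setminus\bar{\Qq}_{\Phi}$, a contradiction.

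There is essentially no hard step in either route; the whole content is the bookkeeping of the integer-valued, additive grading $\xiup_{\Phi}$, which turns the relation $\betaup\in\Qq^{r}_{\Phi}$ into a numerical constraint incompatible with any admissible chain of length $\geq 2$. The only point that deserves explicit verification is the identity $\Rad\setminus\bar{\Qq}_{\Phi}=\bar{\Qq}^{c}_{\Phi}$, needed to check the hypothesis of \eqref{eq2.10} (equivalently, to place $\betaup$ in the range where Lemma \ref{l1.10} is available).
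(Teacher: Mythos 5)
Your proposal is correct and is essentially the paper's own argument: the corollary is stated with only a \qed precisely because, for $\betaup\,{\in}\,\Qq^{\,r}_{\Phi}{\backslash}\bar{\Qq}_{\Phi},$ one has $\xiup_{\Phi}(\betaup)\,{=}\,0$ and $\betaup\,{\in}\,\Qq_{\Phi}\,{\cap}\,\bar{\Qq}^{\,c}_{\Phi},$ so \eqref{eq2.10} yields $\qq^{\sigmaup}_{\Phi}(\betaup)\,{\leq}\,1$ whenever the order is finite --- exactly your first route. Your alternative argument via Lemma~\ref{l1.10} is not genuinely different either, since it just unwinds the derivation of \eqref{eq2.10} itself.
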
 
We obtain also a useful criterion of weak nondegeneracy (cf. \cite[Thm.6.4]{AMN06b})
\begin{prop} \label{p2.9}
The parabolic $CR$ algebra $(\st_{\sigmaup},\qt_{\Phi})$
is weakly nondegenerate if and only if 
\begin{equation}\label{e2.12}
 \forall \betaup\,{\in}\,\Qq_{\Phi}\cap\bar{\Qq}^{\,c}_{\Phi}\;\;\exists\, \alphaup\in\bar{\Qq}_{\Phi}
 \cap\Qq^{\,c}_{\Phi}\;\;\text{such that}\;\; \betaup+\alphaup\in\bar{\Qq}_{\Phi}^{\,c}.
\end{equation}
\end{prop}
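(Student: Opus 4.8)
The plan is to reformulate weak nondegeneracy as a rootwise statement about Levi orders and then prove each implication separately, reading \eqref{e2.12} off from the first step of a minimal chain in one direction and iterating \eqref{e2.12} in the other. First I would record the elementary but crucial identity $\Qq_{\Phi}\setminus\bar{\Qq}_{\Phi}=\Qq_{\Phi}\cap\bar{\Qq}^{\,c}_{\Phi}$: a root $\betaup\in\Qq_{\Phi}$ has $\xiup_{\Phi}(\betaup)\le0$, while $\betaup\notin\bar{\Qq}_{\Phi}$ means $\xiup_{\Phi}(\bar{\betaup})>0$, i.e. $\betaup\in\bar{\Qq}^{\,c}_{\Phi}$. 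Via the characterization \eqref{eq1.29}, weak nondegeneracy of $(\st_{\sigmaup},\qt_{\Phi})$ is exactly the assertion that every $\betaup\in\Qq_{\Phi}\cap\bar{\Qq}^{\,c}_{\Phi}$ has finite Levi order $\qq^{\sigmaup}_{\Phi}(\betaup)$. Since both sides of the claimed equivalence are then quantified over the same set of roots, it suffices to compare them root by root.

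For the implication \emph{weak nondegeneracy} $\Rightarrow$ \eqref{e2.12} I would extract \eqref{e2.12} directly from the \emph{first} term of a minimal realizing sequence. Fix $\betaup\in\Qq_{\Phi}\cap\bar{\Qq}^{\,c}_{\Phi}$ and take $\alphaup_{1},\hdots,\alphaup_{\qq}$ satisfying \eqref{eq1.29} with $\qq=\qq^{\sigmaup}_{\Phi}(\betaup)$. By Lemma~\ref{l1.10}$(i)$ each $\alphaup_{i}\in\bar{\Qq}_{\Phi}\setminus\Qq_{\Phi}$, and $\alphaup_{1}\notin\Qq_{\Phi}$ forces $\alphaup_{1}\in\Qq^{\,c}_{\Phi}$, so $\alphaup_{1}\in\bar{\Qq}_{\Phi}\cap\Qq^{\,c}_{\Phi}$. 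It remains to see that $\gammaup_{1}=\betaup+\alphaup_{1}\in\bar{\Qq}^{\,c}_{\Phi}$. If $\qq\ge2$ this is Lemma~\ref{l1.10}$(ii)$, giving $\gammaup_{1}\in\Qq_{\Phi}\setminus\bar{\Qq}_{\Phi}\subseteq\bar{\Qq}^{\,c}_{\Phi}$; if $\qq=1$ then $\gammaup_{1}\notin\Qq_{\Phi}\cup\bar{\Qq}_{\Phi}$, so in particular $\gammaup_{1}\notin\bar{\Qq}_{\Phi}$, which again means $\gammaup_{1}\in\bar{\Qq}^{\,c}_{\Phi}$. Hence $\alphaup=\alphaup_{1}$ witnesses \eqref{e2.12}.

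For the converse \eqref{e2.12} $\Rightarrow$ \emph{weak nondegeneracy} the idea is to iterate \eqref{e2.12} so as to build a chain realizing \eqref{eq1.29}. Starting from $\betaup=\gammaup_{0}\in\Qq_{\Phi}\cap\bar{\Qq}^{\,c}_{\Phi}$, apply \eqref{e2.12} to obtain $\alphaup_{1}\in\bar{\Qq}_{\Phi}\cap\Qq^{\,c}_{\Phi}$ with $\gammaup_{1}=\gammaup_{0}+\alphaup_{1}\in\bar{\Qq}^{\,c}_{\Phi}$; as long as the new root still lies in $\Qq_{\Phi}$ it again lies in $\Qq_{\Phi}\cap\bar{\Qq}^{\,c}_{\Phi}$, so \eqref{e2.12} applies once more. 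Each $\alphaup_{i}\in\Qq^{\,c}_{\Phi}$ satisfies $\xiup_{\Phi}(\alphaup_{i})\ge1$, whence $\xiup_{\Phi}(\gammaup_{h})\ge\xiup_{\Phi}(\betaup)+h$; since $\xiup_{\Phi}$ is bounded on $\Rad$, after at most $1-\xiup_{\Phi}(\betaup)$ steps one reaches $\xiup_{\Phi}(\gammaup_{k})>0$, so $\gammaup_{k}\notin\Qq_{\Phi}$. As $\gammaup_{k}\in\bar{\Qq}^{\,c}_{\Phi}$ gives $\gammaup_{k}\notin\bar{\Qq}_{\Phi}$, the root $\gammaup_{k}$ lies outside $\Qq_{\Phi}\cup\bar{\Qq}_{\Phi}$, and $\alphaup_{1},\hdots,\alphaup_{k}\in\bar{\Qq}_{\Phi}$ realize \eqref{eq1.29}; thus $\qq^{\sigmaup}_{\Phi}(\betaup)\le1-\xiup_{\Phi}(\betaup)<+\infty$, recovering also the bound \eqref{eq2.10}.

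The only genuinely delicate point is the bookkeeping in this last iteration: one must check that \eqref{e2.12} stays applicable at every intermediate step. This hinges on the fact that the output $\gammaup_{h}\in\bar{\Qq}^{\,c}_{\Phi}$ is automatically outside $\bar{\Qq}_{\Phi}$, so the \emph{sole} obstruction to stopping is membership in $\Qq_{\Phi}$, together with the monotone growth of $\xiup_{\Phi}(\gammaup_{h})$ that forces termination. Everything else reduces to a direct translation between the root-set conditions and Lemma~\ref{l1.10}.
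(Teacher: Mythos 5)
Your proposal is correct and follows essentially the same route as the paper: necessity is extracted from Lemma~\ref{l1.10} applied to the first term of a minimal sequence (you also correctly handle the $\qq{=}1$ case the paper leaves implicit), and sufficiency comes from iterating \eqref{e2.12} with the monotone growth and boundedness of $\xiup_{\Phi}$ forcing termination. The paper casts the sufficiency step as a contradiction (an infinite sequence would violate the boundedness of $\xiup_{\Phi}$), but this is only a cosmetic difference from your direct iteration, which in addition recovers the explicit bound \eqref{eq2.10}.
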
 
\begin{proof} By Lemma~\ref{l1.10} the condition is necessary. To prove that it is also sufficient,
we can argue by contradiction: if we could find
$\betaup\,{\in}\,\Qq\,{\cap}\,\bar{\Qq}^{\,c}_{\Phi}$ with $\qq^{\sigmaup}_{\Phi}(\betaup)\,{=}\,{+}\infty,$
 then by \eqref{e2.12} we could construct an infinite   
 sequence $(\alphaup_{i})_{i\geq{1}}$ in $\bar{\Qq}_{\Phi}\,{\cap}\,\Qq_{\Phi}^{\,c}$
 with 
\begin{equation*}
 \gammaup_{h}=\betaup+{\sum}_{i=1}^{h}\alphaup_{i}\in\Qq_{\Phi}\,{\cap}\,\bar{\Qq}^{\,c}_{\Phi}, \;\;
 \forall h=1,2,\hdots
\end{equation*}
Since $\xiup_{\Phi}(\gammaup_{h})\geq\xiup_{\Phi}(\betaup){+}h$ and $\xiup_{\Phi}$ is bounded,
we get a contradiction. 
\end{proof}
\subsection{\textsc{Levi order of general orbits}}
To discuss Levi order of weakly nondegenerate real orbits $\sfM_{\Phi,\sigmaup}$
in $\sfF_{\Phi}$
by employing Lemma~\ref{l1.10}, we introduce:
\begin{defn}
 If $\betaup\,{\in}\,\Rad,$ we denote by $\qq(\betaup)$ the largest positive integer 
 $\qq$ for which 
\begin{equation} \label{equ1.26}
 \exists \,\alphaup_{1},\hdots,\alphaup_{\qq}\in\Rad \;\;\text{s.t.}\;\;\; 
 \begin{cases}
 \alphaup_{i}{+}\alphaup_{j}\notin\Rad\,{\cup}\,\{0\},\; 
 \forall 1{\leq}i,j{\leq}\qq,\\
 \gammaup_{i_{1},\hdots,i_{h}}=\betaup+\alphaup_{i_{1}}+\cdots+\alphaup_{i_{h}}\in\Rad, \\
 \quad \text{for all distinct $i_{1},\hdots,i_{h}$ in $\{1,\hdots,\qq\}.$}
 \end{cases}
\end{equation}
\end{defn}
\begin{prop} \label{pr1.11}
Let $\betaup\,{\in}\,\Rad$ belong to a simple root system containing more than two elements. Then 
$\qq(\betaup){\leq}4$ and, if $\qq(\betaup){=}4$ and $(\alphaup_{1},\alphaup_{2},\alphaup_{3},\alphaup_{4})$
is a sequence satisfying \eqref{equ1.26}, then 
\begin{equation}
 \betaup+\alphaup_{1}+\alphaup_{2}+\alphaup_{3}+\alphaup_{4}=-\betaup.
\end{equation}More precisely we obtain:
\begin{equation} 
\begin{cases}
\qq(\betaup){=}1, & \text{if $\betaup$ belongs to a root system of type $\mathrm{A_{2}}$};\\[-2pt]
 &\text{or is a long root of a root system of type $\mathrm{B_{2}}$};\\
 \qq(\betaup){=}2, & \text{if $\betaup$ 
 belongs to a root system of type $\mathrm{A_{{\geq}3},C},$}
 \\[-2pt]
 &\text{or is a short root of a  system of type $\mathrm{B_{2},G}$};\\
 \qq(\betaup){=}3, & \text{if $\betaup$ is a short root of a root system of type $\mathrm{B_{\geq{3}},F}$;}
 \\[2pt]
 \qq(\betaup){=}4, & \text{if $\betaup$ belongs to a root system of type $\mathrm{D,E},$}
\\[-2pt]
 &\text{or is a long root of a root system of type $\mathrm{B_{\geq{3}},F,G}$.}
\end{cases}
\end{equation}
\end{prop}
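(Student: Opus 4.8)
The plan is to read \eqref{equ1.26} as a purely combinatorial statement about the Euclidean root system $\Rad$, isolating a clean simply--laced core and then handling the length bookkeeping that the two--length systems require.

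\textbf{Reductions.} First I would record that the first condition in \eqref{equ1.26} forces $(\alphaup_i,\alphaup_j)\geq0$ for $i\neq j$: for roots with $\alphaup_i\neq-\alphaup_j$ and $(\alphaup_i,\alphaup_j)<0$ the $\alphaup_i$--string through $\alphaup_j$ reaches upward, so $\alphaup_i+\alphaup_j\in\Rad$, contradicting $\alphaup_i+\alphaup_j\notin\Rad\cup\{0\}$. Next, writing $\gammaup_S=\betaup+\sum_{i\in S}\alphaup_i$ for $S\subseteq\{1,\dots,\qq\}$, all $\gammaup_S$ are roots; setting $\gammaup=\gammaup_{\{1,\dots,\qq\}}$ and $m=\tfrac12(\betaup+\gammaup)$ one has $\gammaup_S=m+\tfrac12\sum_i\varepsilon_i\alphaup_i$ with $\varepsilon_i\in\{\pm1\}$, so the $2^{\qq}$ roots $\gammaup_S$ are the vertices of a parallelepiped centred at $m$. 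Hence the map $\varepsilon\mapsto|\gammaup_S|^2$ takes at most two values (the two squared root--lengths), is multilinear of degree $\leq2$ in $\varepsilon$, with linear coefficients $(m,\alphaup_i)$ and quadratic coefficients $\tfrac12(\alphaup_i,\alphaup_j)\geq0$. I will also use the elementary identity
\[
 |\gammaup_{\{i,j\}}|^2=|\gammaup_{\{i\}}|^2+|\gammaup_{\{j\}}|^2-|\betaup|^2+2(\alphaup_i,\alphaup_j).
\]

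\textbf{Simply--laced core.} When all roots share one length $L$, the function $|\gammaup_S|^2\equiv L$ is constant on the cube, so by uniqueness of the multilinear expansion every nonconstant coefficient vanishes: $(\alphaup_i,\alphaup_j)=0$ for $i\neq j$ and $(m,\alphaup_i)=0$ for all $i$. Combining $(m,\alphaup_i)=0$ with $|\betaup+\alphaup_i|^2=L$ gives $(\betaup,\alphaup_i)=-L/2$. The Gram matrix of $(\betaup,\alphaup_1,\dots,\alphaup_{\qq})$ is then $L$ on the diagonal, $L\cdot\mathrm{I}$ on the $\alphaup$--block and $-L/2$ in the border; its positive semidefiniteness (Schur complement $L(1-\qq/4)\geq0$) yields $\qq\leq4$, and $\qq=4$ makes it singular, forcing $\betaup=-\tfrac12\sum_i\alphaup_i$, that is $\gammaup=-\betaup$. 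This settles types $\mathrm{A},\mathrm{D},\mathrm{E}$ and already produces the displayed identity at $\qq=4$.

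\textbf{General case and the table.} For two lengths $s\leq\ell$ the length function genuinely takes both values. Taking first and second differences along the edges of the cube (equivalently, applying the identity above) pins each $(\alphaup_i,\alphaup_j)$ to $\{0,\tfrac12(\ell^2-s^2),\ell^2-s^2\}$ and controls the increments $|\betaup+\alphaup_i|^2-|\betaup|^2$; splitting the indices accordingly and re--running the Gram estimate, sharpened by the triple identity, again gives $\qq\leq4$ with $\gammaup=-\betaup$ at equality. The exact row of the table is then obtained by passing to an irreducible $\Rad$: the admissible lengths $|\betaup|^2,|\alphaup_i|^2\in\{s^2,\ell^2\}$ with $\ell^2/s^2\in\{1,2,3\}$ constrain the configuration, the maximum being reached when $\betaup$ is long and the $\alphaup_i$ short. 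The induction is based on the finite list of rank--$2$ systems $\mathrm{A}_2,\mathrm{B}_2,\mathrm{G}_2$, checked directly, together with explicit extremal sequences (a $\mathrm{D}_4$ configuration inside $\mathrm{D},\mathrm{E},\mathrm{F}_4$, and mixed long/short sequences in $\mathrm{B},\mathrm{G}$) realising the asserted maxima, which I would verify by hand to separate the values $1,2,3,4$.

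\textbf{Main obstacle.} The simply--laced step is clean; the real work is the two--length analysis, where orthogonality of the $\alphaup_i$ fails and positive cross terms $(\alphaup_i,\alphaup_j)>0$ appear (from parallel or $60^{\circ}$ short roots). Keeping the Gram/positivity bound sharp in their presence, and in particular pinning the short--versus--long dichotomy ($3$ against $4$) in $\mathrm{B},\mathrm{F},\mathrm{G}$ together with the stabilisation as the rank grows, requires a careful but finite case analysis of the rank--$\leq4$ subsystems generated by $\betaup$ and the $\alphaup_i$; this is the step I expect to absorb most of the effort.
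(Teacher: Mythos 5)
Your simply-laced analysis is correct and takes a genuinely different route from the paper. The paper's proof is an exhaustive enumeration: each type $\mathrm{A}$--$\mathrm{G}$ is realized in orthonormal coordinates, the set $\Rad^{\!\mathrm{add}}(\betaup)=\{\alphaup\in\Rad\mid\betaup{+}\alphaup\in\Rad\}$ is written out explicitly, and the maximal admissible sequences are determined by hand in every case. Your observation that \eqref{equ1.26} makes all $2^{\qq}$ subset sums $\gammaup_{S}$ roots, so that $|\gammaup_{S}|^{2}$ is a degree-two multilinear function on the cube $\{\pm1\}^{\qq}$ taking at most two values, is a uniform viewpoint absent from the paper; in the constant case your deductions $(\alphaup_{i},\alphaup_{j})=0,$ $(\betaup,\alphaup_{i})=-L/2,$ the Schur-complement bound $L(1-\qq/4)\geq0,$ and the degeneration $\betaup=-\tfrac{1}{2}\sum_{i}\alphaup_{i}$ (hence $\gammaup=-\betaup$) at $\qq=4$ are all correct, and they settle types $\mathrm{A},\mathrm{D},\mathrm{E}$ more conceptually than the paper does, up to exhibiting extremal sequences. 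The second-difference identity pinning $(\alphaup_{i},\alphaup_{j})$ to $\{0,\tfrac{1}{2}(\ell^{2}-s^{2}),\ell^{2}-s^{2}\}$ is also correct and is not in the paper.

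The two-length half of your plan, however, contains a genuine gap, and that is where most of the proposition's content lies. A symptomatic small error first: it is false that in a two-length system the length function ``genuinely takes both values'' --- for the long root $\betaup=-e_{1}-e_{2}$ in $\mathrm{B}_{n}$ and the admissible sequence $e_{1}{+}e_{3},\,e_{1}{-}e_{3},\,e_{2}{+}e_{4},\,e_{2}{-}e_{4}$ every vertex $\gammaup_{S}$ is long, so the constant case occurs inside $\mathrm{B}$; this is harmless (your constant-case argument covers it), but it shows the dichotomy must be drawn by the behaviour of $|\gammaup_{S}|^{2}$, not by the ambient type. The real problem is the pivotal sentence asserting that ``re-running the Gram estimate, sharpened by the triple identity, again gives $\qq\leq4$'': nothing in the sketch produces the required lower bound on $-(\betaup,\alphaup_{i})$. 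With $\ell^{2}=2s^{2}$ the increment condition $2(\betaup,\alphaup_{i})+|\alphaup_{i}|^{2}\in\{0,\pm(\ell^{2}-s^{2})\}$ permits $(\betaup,\alphaup_{i})=0$ for short $\alphaup_{i}$, in which case the Schur complement degenerates and positive semidefiniteness bounds nothing; what actually excludes long configurations is the two-valuedness of $|\gammaup_{S}|^{2}$ over the \emph{whole} cube combined with root-string and integrality constraints specific to each two-length type, and extracting from these both the bound $\qq\leq4$ and the sharp intermediate values ($\qq=2$ for $\mathrm{C}$ and for short roots of $\mathrm{B}_{2},\mathrm{G}$; $\qq=3$ for short roots of $\mathrm{B}_{\geq3},\mathrm{F}$) is exactly the finite enumeration the paper performs and your sketch defers. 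There is moreover a circularity in the closing plan: restricting to ``rank-$\leq4$ subsystems generated by $\betaup$ and the $\alphaup_{i}$'' presupposes the bound $\qq\leq4$, since a putative admissible sequence of length five spans a subsystem of rank up to six. Finally, all the exact values in the table --- the extremal sequences for the lower bounds as well as the upper bounds --- are left as unperformed hand checks; in the paper these checks constitute the body of the proof, so as written your proposal proves only the simply-laced bound and the antipodal identity.
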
 
\begin{proof} For short we will call \emph{admissible} a sequence $(\alphaup_{i})$ for which
\eqref{equ1.26} is valid.
Let us set 
\begin{equation*}
\Rad^{\!\text{add}}(\betaup)=\{\alphaup\in\Rad\mid \betaup{+}\alphaup\in\Rad\}.
\end{equation*}
We consider the different cases using
for root systems the notation of~\cite{Bou68}.\par\smallskip
\paragraph{Type $\mathrm{A}$} We have $\Rad\,{=}\,\{\pm(e_{i}-e_{j})\,{\mid}\, 
1{\leq}i{<}j{\leq}n\}$ where $e_{1},\hdots,e_{n}$ is an orthonormal basis of
 $\R^{n}.$ We can take
$\betaup=e_{2}{-}e_{1}.$ Then 
\begin{equation*} \tag{$*A$}
\Rad^{\!\text{add}}(e_{2}{-}e_{1})=\{e_{1}-e_{i}\,{\mid}\, i{>}2\}\cup\{e_{i}-e_{2}\,{\mid}\, i>2\}. 
\end{equation*}
An admissible 
sequence $(\alphaup_{i})$ can contain at most one element from each of the two sets
in the right hand side of  $(*A).$ \par 
If $n\,{=}\,3,$ then $\Rad^{\!\text{add}}(\betaup)\,{=}\,\{e_{3}{-}e_{2},\, e_{1}{-}e_{3}\}$
contains two elements, whose sum is still a root and therefore $\qq(\betaup)\,{=}\,1.$ 
\par 
If $n{>}3,$ then the only possible choice is that of a couple of roots
$e_{i}{-}e_{2},\, e_{1}{-}e_{j}$ with $3\,{\leq}i{\neq}j{\leq}n$ and hence $\qq(\betaup)\,{=}\,2.$ 
\par\smallskip
\paragraph{Type $\mathrm{B}$} We have $\Rad\,{=}\,\{{\pm}e_{i}{\pm}e_{j}\,{\mid}\, 1{\leq}i{<}j{\leq}n\}
{\cup}\{{\pm}e_{i}\,{\mid}\, 1{\leq}i{\leq}n\}, $ for an orthonormal basis $e_{1},\hdots,e_{n}$ of $\R^{n}$
($n{\geq}2$).\par
If $\betaup$ is a short root, we can take $\betaup\,{=}\,{-}e_{1}.$ Then 
\begin{equation*}
 \tag{$*B$}\Rad^{\!\text{add}}({-}e_{1})=\{{\pm}e_{i}\mid 2\leq{i}\leq{n}\}\cup\{e_{1}{\pm}e_{j}\mid 2\leq{j}\leq{n}\}.
\end{equation*}
An admissible sequence contains at most one root from the first 
and two from the second 
set in the right hand side of
$(*B).$ Thus $\qq(-e_{1}){\leq}3.$ 
The sequence $e_{1}{-}e_{2},\,e_{1}{+}e_{2}$ satisfies \eqref{equ1.26} and therefore
$\qq(-e_{1}){\geq}2.$ \par
We have equality if $n{=}2,$ because in this case 
$\Rad^{\!\text{add}}(-e_{1})\,{=}\{{\pm}{e}_{2},\, e_{1}{\pm}e_{2}\}$ and the maximal admissible sequences
are then $(e_{2}),$ $({-}e_{2}),$ $(e_{1}{+}e_{2},\, e_{1}{-}e_{2}).$
\par 
 If $n{>}2$ the admissible sequence
 $$
 (e_{1}{+}e_{2},\, e_{1}{-}e_{2},e_{3})$$ 
 shows that 
$\qq(-e_{1}){=}3.$ All admissible maximal sequences are of this form.
 \par\smallskip
If $\betaup$ is a long root, we can assume that $\betaup\,{=}\,-e_{1}{-}e_{2}.$ Then 
\begin{equation*} \tag{$*\!*\!{B}$}
\Rad^{\!\text{add}}({-}e_{1}{-}e_{2}){=}
 \{e_{1},e_{2}\}\cup \{e_{1}{\pm}e_{j}\,{\mid}\,\; j{>}2\}
 \cup \{e_{2}{\pm}e_{j}\,{\mid}\, \; j{>}2\}.
\end{equation*}
An admissible sequence contains at most two equal terms 
from the first and two from 
each of the second
and third on the right hand side of $({*\!*\!{B}}).$ Moreover, if one term is taken from the first,
we can take at most one from each one of the other two. 
This implies that $\qq(-e_{1}{-}e_{2}){\leq}4$
and 
in fact $\qq({-}e_{1}{-}e_{2}){=}4,$ with maximal sequences  isomorphic to one of 
\begin{align*}
& e_{1}+e_{3},\; e_{1}-e_{3},\; e_{2}+e_{4},\; e_{2}-e_{4},\\
& e_{1},\; e_{1},\; e_{1}{-}e_{3},\; e_{1}{+}e_{3},
\end{align*}
which, summed up to $({-}e_{1}{-}e_{2}),$ gives $e_{1}{+}e_{2}.$ 
\par\medskip
\paragraph{Type $\mathrm{C}$} 
 We can take $\Rad\,{=}\,\{{\pm}e_{i}{\pm}e_{j}\,{\mid}\, 1{\leq}i{<}j{\leq}n\}
{\cup}\{{\pm}2e_{i}\,{\mid}\, 1{\leq}i{\leq}n\}, $ for an orthonormal basis $e_{1},\hdots,e_{n}$ of $\R^{n}$
($n{\geq}3$).\par
If $\betaup$ is a short root, we can assume that $\betaup{=}({-}e_{1}{-}e_{2}).$ Then 
\begin{equation*}
 \tag{$*C$}\Rad^{\!\text{add}}({-}e_{1}{-}e_{2})=\{2e_{1},2e_{2}\}\cup\{e_{1}{\pm}e_{j}\,{\mid}\, j\,{\geq}\,3\}
 \cup\{e_{2}{\pm}e_{j}\,{\mid}\,  j\,{\geq}\,3\}.
\end{equation*}
An admissible sequence may contain both roots of the first, but at most one root from each the
second and third sets on the right hand side of $(*C).$ Moreover, a term in one of the last two
forbids the corresponding term in the first one. This yields $\qq({-}e_{1}{-}e_{2}){=}2,$ with
maximal sequences isomorphic to (the third one should be omitted if $n{=}3$)
\begin{align*}
 (2e_{1},\; 2e_{2}),\;\;\; (2e_{1},\, e_{2}{+}e_{3}),\;\; (e_{1}{+}e_{3},\, e_{2}{+}e_{4})
\end{align*}
\par\smallskip
If $\betaup$ is a long root, we can assume that $\betaup\,{=}\,{-}2e_{1}.$ Then 
\begin{equation*}
 \tag{*\! *C}\Rad^{\!\text{add}}({-}2e_{1})=\{e_{1}{\pm}e_{i}\mid i>1\}.
\end{equation*}
We note that $\qq(-2e_{1}){\leq}4.$ 
We cannot take in an admissible sequence both the element $e_{1}{+}e_{i}$ and
$e_{1}{-}e_{i},$ because they add up to the root $2e_{i}.$ 
Hence in fact $\qq({-}2e_{1}){=}2,$ with maximal sequence isomorphic to 
\begin{align*}
& e_{1}{+}e_{2},\;  e_{1}{+}e_{3}.
\end{align*} 
 \par\medskip
 \paragraph{Type $\mathrm{D}$} We can take $\Rad\,{=}\,\{{\pm}e_{i}{\pm}e_{j}\,{\mid}\, 1{\leq}i{<}j
 {\leq}n\},$ where $e_{1},\hdots,e_{n}$ is an orthonormal  basis of $\R^{n}$ ($n{\geq}4$).\par
 We can assume that $\betaup={-}e_{1}{-}e_{2}.$ We have 
\begin{equation*}
 \tag{$*D$}\Rad^{\!\text{add}}({-}e_{1}{-}e_{2})=\{e_{1}{\pm}e_{j}\,{\mid}\, j\,{\geq}\,3\}
 \cup\{e_{2}{\pm}e_{j}\,{\mid}\,  j\,{\geq}\,3\}.
\end{equation*}
An admissible sequence contains at most two elements from each set in the right hand side
of $(*D).$ Therefore $\qq({-}e_{1}{-}e_{2}){\leq}4$ and in fact we have equality with maximal
admissibe sequences isomorphic to 
\begin{equation*}
 e_{1}{+}e_{3},\;  e_{1}{-}e_{3},\; e_{2}{+}e_{4},\; e_{2}{-}e_{4},
\end{equation*}
which, summed up to $({-}e_{1}{-}e_{2}),$ give $e_{1}{+}e_{2}.$
\par\medskip  
\paragraph{Type $\mathrm{E}$} Since the root systems $\mathrm{E}_{6}$ and $\mathrm{E}_{7}$
can be considered as subsystems of $\mathrm{E}_{8},$ we will restrain to this case. We consider,
for an orthonormal basis $e_{1},\hdots,e_{8}$ of $\R^{8},$  
\begin{equation*}
 \Rad=\{{\pm}{e}_{i}{\pm}e_{j}\mid 1{\leq}i{<}j{\leq}8\}\cup
 \left.\left\{\tfrac{1}{2}{\sum}_{i=1}^{8}(-1)^{h_{i}}e_{i}\,\right| \, h_{i}\in\Z,\; {\sum}_{i=1}^{8}h_{i}\in 2\Z\right\}.
\end{equation*}
We can take $\betaup\,{=}\,({-}e_{1}{-}e_{2}).$ Then 
\begin{equation*}
 \tag{$*E$}\left\{
  \begin{aligned}
\Rad^{\!\text{add}}({-}e_{1}{-}e_{2})=\{{e}_{1}{\pm}e_{i}\mid 3{\leq}i{\leq}8\}\cup
 \{{e}_{2}{\pm}e_{i}\mid 3{\leq}i{\leq}8\}\qquad\qquad\\
 \cup
 \left.\left\{\tfrac{1}{2}\left(
 e_{1}{+}e_{2}{+}{\sum}_{i=3}^{8}(-1)^{h_{i}}e_{i}\right)\,\right| \, h_{i}\in\Z,\; {\sum}_{i=3}^{8}h_{i}\in 2\Z\right\}
 \end{aligned}\right.
\end{equation*}
An admissible sequence may contain at most two roots from each set on the right hand
side of $(*E)$ and no more than four terms. Clearly we can take the maximal sequence 
\begin{equation*}
 e_{1}{+}e_{3},\; e_{1}{-}e_{3},\; e_{2}{+}e_{4},\; e_{2}{-}e_{4},
\end{equation*}
showing that $\qq({-}e_{1}{-}e_{2})\,{=}\,4.$ Moreover, any admissible sequence containing
four terms sums up to $({-}e_{1}{-}e_{2})$ to yield $e_{1}{+}e_{2}.$ 
\par\medskip  
\paragraph{Type $\mathrm{F}$} For an orthonormal basis $e_{1},e_{2},e_{3},e_{4}$ of $\R^{4}$
we take 
\begin{equation*}
 \Rad=\{{\pm}e_{i}\mid 1{\leq}i{\leq}4\}\cup\{{\pm}e_{1}{\pm}e_{j}\mid 1{\leq}i{<}j{\leq}4\}
 \cup\{\tfrac{1}{2}({\pm}e_{1}{\pm}e_{2}{\pm}e_{3}{\pm}e_{4})\}.
 \end{equation*}
 \par If $\betaup$ is a short root, we can take $\betaup{=}{-}e_{1}.$ Then 
\begin{equation*}
 \tag{$*F$}
\Rad^{\!\text{add}}(-e_{1})=\{{\pm}e_{i}\mid 2{\leq}i{\leq}4\}\cup\{e_{1}{\pm}e_{i}\mid 2{\leq}i{\leq}4\}\cup
 \{\tfrac{1}{2}(e_{1}{\pm}e_{2}{\pm}e_{3}{\pm}e_{4})\}.
\end{equation*}
To build an an admissible sequence
we can take at most one element  
from the first, two from the second and  from the
third set in the right hand side of $(*F).$ Indeed two roots of the form
$\tfrac{1}{2}({\pm}e_{1}{\pm}e_{2}{\pm}e_{3}{\pm}e_{4})$ do not add up to a root if
and only if they differ by only one sign. Moreover, no root can be taken from the first
if one is taken from the last set. These considerations imply that $\qq({-}e_{1}){\leq}3$
and in fact equality holds, as $({-}e_{1})$ is contained in a subsystem of type
$\mathrm{B}_{3}.$ \par \smallskip
If $\betaup$ is a long root, we can assume $\betaup{=}({-}e_{1}{-}e_{2}).$  We have
\begin{equation*}
 \tag{$*\! * \!{F}$} \left\{ \begin{aligned}
\Rad^{\!\text{add}}({-}e_{1}{-}e_{2})=\{e_{1},e_{2}\}\cup
\{e_{1}{\pm}e_{i}\mid 3{\leq}i{\leq}4\}\quad\\
\cup\{e_{2}{\pm}e_{i}\mid 3{\leq}i{\leq}4\}
\cup 
\{\tfrac{1}{2}(e_{1}{+}e_{2}{\pm}e_{3}{\pm}e_{4})\}.
\end{aligned} \right. 
\end{equation*}
We note that the sum of four terms of $\Rad^{\!\text{add}}({-}e_{1}{-}e_{2})$
is a linear combination $\betaup +k_{1}e_{1}{+}k_{2}e_{2}{+}k_{3}e_{3}{+}k_{4}e_{4}$ with
$k_{1}{+}k_{2}{\geq}2$ and therefore, if they form an admissible sequence,
is equal to $e_{1}{+}e_{2}.$ Since $\Rad$ contains subsystems of type $\mathrm{B}_{3},$
there are indeed admissible sequences with four elements. \par\medskip
\paragraph{Type $\mathrm{G}$} For an orthonormal basis $e_{1},e_{2},e_{3}$ of $\R^{3}$ we set 
\begin{equation*}
 \Rad=\{{\pm}(e_{i}-e_{j})\mid 1{\leq}i{<}j{\leq}3\}\cup\{{\pm}(2e_{i}-e_{j}-e_{k})\mid \{i,j,k\}=\{1,2,3\}\}.
\end{equation*}
We consider firs the case of a short root. We can take $\betaup=e_{2}{-}e_{1}.$ Then 
\begin{equation*}\tag{$*{G}$}
\Rad^{\!\text{add}}(e_{2}{-}e_{1})=\{e_{3}{-}e_{2},\; e_{1}{-}e_{3}\}\cup\{ 2e_{1}{-}e_{2}{-}e_{3},\;
 e_{1}{+}e_{3}{-}2e_{2}\}.
\end{equation*}
Maximal admissible sequences have a root from the first and one from the second set, hence
$\qq(e_{2}{-}e_{1})\,{=}\,2$ and, moreover, summed up to $e_{2}{-}e_{1},$ give $e_{1}{-}e_{2}.$ 
\par
As a long root we take $\betaup=(e_{2}{+}e_{3}{-}2e_{1}).$ Then 
\begin{equation*}
 \tag{$*\! *\! {G}$}\Rad^{\!\text{add}}(e_{2}{+}e_{3}{-}2e_{1})=\{e_{1}{-}e_{2},\; e_{1}{-}e_{3}\}
 \cup \{e_{1}{+}e_{2}{-}2e_{3},\; e_{1}{+}e_{3}{-}2e_{2}\}.
\end{equation*}
One checks that in this case $\qq(e_{2}{+}e_{3}{-}2e_{1})\,{=}\,4,$ with 
a maximal admissible
sequence 
\begin{equation*}
 e_{1}{-}e_{2},\;  e_{1}{-}e_{2},\;  e_{1}{-}e_{2},\; e_{1}{+}e_{2}{-}2e_{3}
\end{equation*}
which indeed sums up to the opposite root $2e_{1}{-}e_{2}{-}e_{3}.$ 
\par
The proof is complete.
\end{proof}
As an easy consequence we obtain:
\begin{thm}\label{thm1.11} Let $\sfM_{\Phi,\sigmaup}$ be a real orbit which is fundamental
and weakly nondegenerate. 
 Then its Levi order  is less or equal to $3.$
\end{thm}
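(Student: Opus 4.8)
The plan is to reduce the Levi order of $\sfM_{\Phi,\sigmaup}$ to the integers $\qq_{\Phi}^{\sigmaup}(\betaup)$ attached to single roots, and then to sharpen by one unit the combinatorial bound of Proposition~\ref{pr1.11}. First I would note that, the descending Levi chain being assembled root space by root space, a root $\betaup\in\Qq_{\Phi}{\backslash}\bar{\Qq}_{\Phi}$ leaves the chain exactly at step $\qq_{\Phi}^{\sigmaup}(\betaup),$ so that the Levi order of $\sfM_{\Phi,\sigmaup}$ is the largest of the $\qq_{\Phi}^{\sigmaup}(\betaup);$ weak nondegeneracy says precisely that these numbers are all finite. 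It therefore suffices to prove $\qq_{\Phi}^{\sigmaup}(\betaup)\leq 3$ for every $\betaup\in\Qq_{\Phi}{\backslash}\bar{\Qq}_{\Phi}.$ I would fix such a $\betaup,$ set $\qq=\qq_{\Phi}^{\sigmaup}(\betaup)$ and take a sequence $\alphaup_{1},\hdots,\alphaup_{\qq}\in\bar{\Qq}_{\Phi}$ realizing \eqref{eq1.29}. Since all partial sums $\gammaup_{h}$ are roots, this sequence and $\betaup$ lie in the irreducible component $\Rad_{0}$ of $\Rad$ containing $\betaup$; were $\Rad_{0}$ of type $\mathrm{A}_{1},$ no root $\alphaup$ would satisfy $\betaup{+}\alphaup\in\Rad_{0}$ and \eqref{eq1.29} would fail for every length, against finiteness of $\qq,$ so $\Rad_{0}$ has more than two elements and Proposition~\ref{pr1.11} applies to it.

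Next I would read the minimal Levi sequence as an \emph{admissible} sequence for \eqref{equ1.26}. Lemma~\ref{l1.10}$(iv)$ gives $\alphaup_{i}{+}\alphaup_{j}\notin\Rad$ for $i{<}j;$ as the system is reduced $2\alphaup_{i}\notin\Rad,$ and $\alphaup_{i}{+}\alphaup_{j}\neq0$ because $\alphaup_{j}{=}{-}\alphaup_{i}$ with $\alphaup_{i}\notin\Qq_{\Phi}$ (Lemma~\ref{l1.10}$(i)$, whence $\xiup_{\Phi}(\alphaup_{i}){>}0$) would force $\xiup_{\Phi}(\alphaup_{j}){<}0,$ i.e. $\alphaup_{j}\in\Qq_{\Phi},$ contradicting $(i).$ This is the first condition in \eqref{equ1.26}. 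For the second, Lemma~\ref{l1.10}$(iii)$ makes \eqref{eq1.29} survive every permutation of the $\alphaup_{i};$ selecting for each subset a permutation listing its indices first shows every partial sum $\betaup+\alphaup_{i_{1}}+\cdots+\alphaup_{i_{h}}$ to be a root. Thus $(\alphaup_{1},\hdots,\alphaup_{\qq})$ is admissible and $\qq\leq\qq(\betaup)\leq4$ by Proposition~\ref{pr1.11}.

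Then I would rule out $\qq=4.$ In that case the minimal sequence realizes $\qq(\betaup)=4,$ so Proposition~\ref{pr1.11} forces
\begin{equation*}
 \gammaup_{4}=\betaup+\alphaup_{1}+\alphaup_{2}+\alphaup_{3}+\alphaup_{4}={-}\betaup.
\end{equation*}
But $\bar{\Qq}_{\Phi}$ is parabolic, so $\Rad=\bar{\Qq}_{\Phi}\cup({-}\bar{\Qq}_{\Phi}),$ and $\betaup\notin\bar{\Qq}_{\Phi}$ gives $\betaup\in{-}\bar{\Qq}_{\Phi},$ i.e. $\gammaup_{4}={-}\betaup\in\bar{\Qq}_{\Phi}\subseteq\Qq_{\Phi}\cup\bar{\Qq}_{\Phi},$ contradicting the closing condition $\gammaup_{\qq}\notin\Qq_{\Phi}\cup\bar{\Qq}_{\Phi}$ of \eqref{eq1.29}. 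Hence $\qq\leq3$ for every $\betaup,$ and the Levi order of $\sfM_{\Phi,\sigmaup}$ is at most $3.$

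The hard part will be precisely the bridge between the Levi-theoretic quantity $\qq_{\Phi}^{\sigmaup}(\betaup)$ and the purely combinatorial $\qq(\betaup)$ of \eqref{equ1.26}, and the realization that the closing condition $\gammaup_{\qq}\notin\Qq_{\Phi}\cup\bar{\Qq}_{\Phi}$ clashes with the equality $\gammaup_{4}={-}\betaup$ that Proposition~\ref{pr1.11} imposes in the length-four case. It is this clash, rather than any new root computation, that turns the combinatorial ceiling $4$ into the sharp CR bound $3.$
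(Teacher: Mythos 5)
Your proposal is correct and follows essentially the same route as the paper: the paper's (much terser) proof of Theorem~\ref{thm1.11} likewise combines Proposition~\ref{pr1.11} with the observation that $\betaup\notin\bar{\Qq}_{\Phi}$ forces ${-}\betaup\in\bar{\Qq}_{\Phi}$ (parabolicity), so a length-four sequence, whose total sum must be ${-}\betaup,$ violates the closing condition $\gammaup_{\qq}\notin\Qq_{\Phi}\cup\bar{\Qq}_{\Phi}$ of \eqref{eq1.29}. Your write-up merely makes explicit the bridging steps the paper leaves implicit (that a minimal Levi sequence is admissible for \eqref{equ1.26} via Lemma~\ref{l1.10}, including $\alphaup_{i}{+}\alphaup_{j}\neq0$ and the subset sums by permutation, and the exclusion of the type $\mathrm{A}_{1}$ component), which is a faithful and sound elaboration of the same argument.
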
 
\begin{proof}
 This is a consequence of Prop.\ref{pr1.11} and the fact that, if $\betaup$ does not belong 
to $\bar{\Qq}_{\Phi},$ then 
 ${-}\betaup\in\bar{\Qq}_{\Phi}$ because $\bar{\Qq}_{\Phi}$ is a parabolic
 set of roots. 
\end{proof}
\begin{exam} (\cite[\S{7}]{fels07}) \label{ex2.12}
Let $n$ be an integer $\geq{3}$ and fix  
a symmetric 
$\C$-bilinear 
form $\bil$ on 
$\C^{2n+1}\! .$ The Lie algebra of
the group of $\C$-linear transformations of $\C^{2n+1}$
that keep $\bil$ invariant is a simple complex
Lie algebra $\ot_{2n+1}(\C)$
of type $\mathrm{B}_{n},$ with root system \begin{equation*}
 \Rad=\{{\pm}e_{i}\mid{1}\leq{i}\leq{n}\}\cup\{{\pm}e_{i}{\pm}e_{j}\mid 1{\leq}i{<}j{\leq}n\}
\end{equation*}
for an orthonormal basis $e_1,\hdots,e_n$ of $\R^n.$
Fix $k$ with $1{<}k{<}n.$ The cross-marked Dynkin diagram 
\begin{equation*} 
    \xymatrix@R=-.3pc{
    \alphaup_{1} & \alphaup_{2} && \alphaup_{k}&
    & \alphaup_{n-1}&\alphaup_{n}\\
 \!\!\medcirc\!\!\ar@{-}[r]
&\!\!\medcirc\!\!\ar@{-}[r]
&\! \cdots\! 
&\!\!\medcirc\!\!\ar@{-}[r]\ar@{-}[l]&\!\cdots\!\ar@{-}[r]
&\!\!\medcirc\!\! \ar@{=>}[r]
&\!\!\medcirc\!\!
\\
  & &&\times&&}
\end{equation*}
represents
the grassmannian of totally $\bil$-isotropic
$k$-planes in $\C^{2n+1}.$ 
Here $\alphaup_{i}{=}e_{i}{-}e_{i+1}$ for $1{\leq}i{<}n$ and $\alphaup_{n}\,{=}\,e_{n}.$
We have 
$\Phi\,{=}\,\{\alphaup_{k}\}$ and 
\begin{equation*}
 \xiup_{\Phi}(e_{i})= 
\begin{cases}
 1, & \text{if $1{\leq}i{\leq}k,$}\\
 0, & \text{if $k{<}i{\leq}n.$}
\end{cases}
\end{equation*}
Real forms are obtained by fixing a conjugation $\sigmaup$ on $\C^{2n+1}.$ Then 
\begin{equation*}
\bil_{\sigmaup}(\vq,\wq)=\bil(\vq,\sigmaup(\wq)),\;\;\forall \vq,\wq\in\C^{2n+1}
\end{equation*}
is hermitian symmetric and nondegenerate, of signature $(p,q)$ for a pair of nonnegative integers
with $p{+}q{=}2n{+}1.$ The Lie algebra of the group of $\C$-linear transformations which
keep fixed both $\bil$ and $\bil_{\sigmaup}$ is a real form $\st_{\sigmaup}$ of 
$\st{\simeq}\ot_{2n+1}(\C),$ which is isomorphic to the real simple Lie algebra $\ot(p,q).$\par
We define a conjugation $\sigmaup$ on $\Rad$ by 
\begin{equation*}
 \sigmaup(e_{1})=e_{n},\quad \sigmaup(e_{i})={-}e_{i}, \;\text{if $1\,{<}\,i\,{<}\,n,$}
 \quad \sigmaup(e_{n})=e_{1}.
\end{equation*}
According to the number of compact roots between $e_{2},\hdots,e_{n-1},$ this conjugation
corresponds to any of the Lie algebras $\ot(p,2n{+}1-p)$ with $1{\leq}p{\leq}2n.$ 
The orbit $\sfM_{\Phi,\sigmaup}$ consists of 
$\bil_{\sigmaup}$-isotropic
$k$-spaces $\ell_{k}$ 
with $\dim(\ell_{k}\cap\sigmaup(\ell_{k})){=}k{-}1.$ 
By \eqref{e1.13},
since $\qt_{\Phi}$ is maximal,
 if  
the parabolic $CR$ algebra $(\st_{\sigmaup},\qt_{\Phi})$ is fundamental and not totally
complex, then it is also weakly nondegenerate. 
To compute its Levi order  we observe that 
\begin{equation*} \begin{cases}
 \Qq_{\Phi}^{c}\cap\bar{\Qq}_{\Phi}^{c}=\{e_{1}+e_{n}\},\\
\Qq_{\Phi}^{c}\cap \bar{\Qq}_{\Phi}=\{e_{i}\mid 1{\leq}i{\leq}k\}\cup
 \{e_{i}{\pm}e_{j}\mid 1{\leq}i{\leq}k{<}j{\leq}n\},\\
 \begin{aligned}
 \Qq_{\Phi}\cap\bar{\Qq}_{\Phi}^{c}=\{-e_{i}\mid 2{\leq}i{\leq}k\}\cup\{e_{n}\}\cup\{e_{n}{\pm}e_{j}\mid
 k{<}j{<}n\}\qquad\quad\\ 
 \cup  \{{-}e_{i}{\pm}e_{j}\mid 2{\leq}i{\leq}k{<}j{<}n\}
 \cup \{{\pm}e_{1}{-}e_{i}\mid 2{\leq}i{\leq}k\}
 \end{aligned}
 \end{cases}
\end{equation*}
The fact that $\Qq_{\Phi}^{c}\cap\bar{\Qq}_{\Phi}^{c}{\neq}\emptyset$ 
shows that $(\st_{\sigmaup},\qt_{\Phi})$ is not totally complex.\par 
Since $e_{n}\,{\in}\,{\Qq}_{\Phi}\cap\bar{\Qq}_{\Phi}^{c}$ 
and $e_{1}\,{\in}\,  \bar{\Qq}_{\Phi}\cap\Qq_{\Phi}^{c}$
add up to $e_{1}{+}e_{n},$ the $CR$ algebra $(\st_{\sigmaup},\qt_{\Phi})$ is fundamental
and therefore, as we noticed above, weakly nondegenerate. \par 
The roots $\betaup_{i}\,{=}\,-(e_{1}{+}e_{i}),$ for $2{\leq}i{\leq}k$ belong to
 ${\Qq}_{\Phi}\cap\bar{\Qq}_{\Phi}^{c}$ and have $\xiup_{\Phi}(\betaup_{i})\,{=}\,{-}2.$
Since $\xiup_{\Phi}(e_{1}{+}e_{2})\,{=}\,1$  
and 
$\xiup_{\Phi}(\alphaup)\,{\leq}\,1$ for all $\alphaup\,{\in}\,\bar{\Qq}_{\Phi}\cap\Qq_{\Phi}^{c},$
no chain \eqref{eq1.29} that added up to $\betaup_{i}$ yields $e_{1}{+}e_{n}$ contains
less than three elements. By Thm.\ref{thm1.11} this shows that
$(\st_{\sigmaup},\qt_{\Phi})$ has Levi  order $3.$
We have indeed 
\begin{equation*}
  ({-}e_{1}{-}e_{i})+e_{1}+e_{1}+(e_{i}{+}e_{n})=e_{1}{+}e_{n}.
\end{equation*}
\end{exam}

\begin{exam}
Consider a simple complex Lie algebra $\st$ of type $\mathrm{D}_{4}.$ 
Its root system is described, by using an orthonormal basis 
$e_{1},e_{2},e_{3},e_{4}$ of $\R^{4},$~by
\begin{equation*}
 \Rad=\{{\pm}e_{i}{\pm}e_{j}\mid 1{\leq}i{<}j{\leq}4\}.
\end{equation*}
Consider the complex flag manifod $\sfF_{\Phi}$ corresponding to the cross-marked Dynkin
diagram
  \begin{equation*}
 \xymatrix@R=-.3pc{
 \alphaup_3\\
\!\!
\medcirc\!\!\! \ar@{-}[rdd] \\
&\alphaup_{2} &\alphaup_{1}\\
 & \!\!\medcirc\!\!\! \ar@{-}[r]  & \!\!\medcirc\!\!\! 
\\
&\times\\
\!\!
\medcirc\!\!\! \ar@{-}[ruu]
\\
 \alphaup_{4}}
\end{equation*}
with $\alphaup_{1}=e_{1}-e_{2},$ $\alphaup_{2}=e_{2}-e_{3},$ $\alphaup_{3}=e_{3}-e_{4},$ 
$\alphaup_{4}=e_{3}+e_{4}$ and $\Phi\,{=}\,\{\alphaup_{2}\}.$  \par 
 It is the grassmannian of projective lines contained in
the nondegenerate quadric complex hypersurface in $\CP^{7}.$ The grading functional is 
\begin{equation*}
 \xiup_{\Phi}(e_{i})= 
\begin{cases}
 1, & i=1,2,\\
 0, & i=3,4.
\end{cases}
\end{equation*}
Take the conjugation 
\begin{equation*}
 \sigmaup(e_{1})=e_{4},\;\;  \sigmaup(e_{2})=-e_{2},\;\; \sigmaup(e_{3})=-e_{3},\;\; 
 \sigmaup(e_{4})=e_{1}.\;\;
 \end{equation*}
We have 
\begin{equation*}
 \Qq_{\Phi}^{c}\cap\bar{\Qq}^{c}=\{e_{1}{+}e_{4}\}.
\end{equation*}
This shows that $(\st_{\sigmaup},\qt_{\Phi})$ is not totally complex and therefore,
since $\qt_{\Phi}$ is maximal parabolic, this $CR$ algebra is weakly nondegenerate iff
it is fundamental. We have 
\begin{equation*} 
\begin{cases}
 {\Qq}_{\Phi}^{\,c}\,{\cap}\,\bar{\Qq}_{\Phi}=\{e_{1}{-}e_{3}, e_{1}{-}e_{4},
 e_{2}{-}e_{3}, e_{2}{-}e_{4},
 e_{3}{-}e_{4},  e_{1}{+}e_{2},e_{1}{+}e_{3}, e_{2}{+}e_{3}, e_{2}{+}e_{4}\}\\
 \begin{aligned}
 \Qq_{\Phi}\cap\bar{\Qq}^{c}_{\Phi}=\{e_{3}{+}e_{4},\; e_{4}{-}e_{1},\; e_{3}{-}e_{2}, \; {-}e_{1}{-}e_{2},
 {-}e_{1}{-}e_{3}, e_{4}{-}e_{2}, e_{4}{-}e_{3}, \;\quad\qquad \\
 {-}e_{2}{-}e_{3}, e_{1}{-}e_{2}\}.
 \end{aligned}
\end{cases}
\end{equation*}
Note that $e_{3}{+}e_{4}\,{\in}\,\Qq_{\Phi}\cap\bar{\Qq}^{c}$ and $e_{1}{-}e_{3}\,{\in}\,
\bar{\Qq}\,{\cap}\,\Qq^{c}$ sum up to $e_{1}{+}e_{4}.$ This shows that $(\st_{\sigmaup},\qt_{\Phi})$
is not totally complex and fundamental. Since $\qt_{\Phi}$ is maximal parabolic, this
implies that $(\st_{\sigmaup},\qt_{\Phi})$ is weakly nonedegenerate.
\par 
The root $\betaup{=}{-}e_{1}{-}e_{2}$ belongs to $\Qq_{\Phi}\,{\cap}\,
\bar{\Qq}^{c}_{\Phi}$ and $\xiup_{\Phi}(\betaup)\,{=}\,{-}2.$ Since all roots $\alphaup$
in $\bar{\Qq}_{\Phi}\,{\cap}\,\Qq^{c}_{\Phi}$ 
distinct from $e_{1}{+}e_{2}$ have $\xi_{\betaup}(\alphaup){=}1,$ 
a sequence satisfying \eqref{eq1.29} and summing up with $\betaup$
to $\{e_{1}{+}e_{4}\}$ contains at least $3$ elements. 
By Thm.\ref{thm1.11} this shows that
$(\st_{\sigmaup},\qt_{\Phi})$ has Levi  order $3.$
An admissible sequence for ${-}e_{1}{-}e_{2}$ is 
$
 (e_{1}{-}e_{3},\; e_{1}{+}e_{3},\; e_{2}{+}e_{4}).
$
 \end{exam}
\begin{exam} Consider a semisimple complex Lie algebra $\st$ of type $\mathrm{G}_{2}.$ 
Having fixed a Cartan subalgebra, we can write its root system in the form 
\begin{equation*}
 \Rad=\{{\pm}(e_{i}-e_{j})\mid 1{\leq}i{<}j{\leq}3\}\cup\{\pm(2e_{\sigmaup_{1}}-e_{\sigmaup_{2}}-e_{\sigmaup_{3}})
 \mid \sigmaup\in\Sb_{3}, \; \sigmaup_{2}<\sigmaup_{3}\},
\end{equation*}
for an orthonormal basis $e_{1},e_{2},e_{3}$ of $\R^{3}.$ 
We consider the complex flag manifold $\sfF_{\Phi}$ 
corresponding to the cross-marked Dynkin diagram
\begin{equation*}
  \xymatrix@R=-.3pc{  
\alphaup_2 & \alphaup_1  \\
\!\!\medcirc\!\!\!
\ar@3{->}[r]&\!\!\medcirc\!\! \\
\times}\quad \qquad\begin{matrix} 
\qquad\quad \\
\alphaup_1{=}e_{1}-e_{2}\;\; \alphaup_2{=}2e_{2}-e_{1}-e_{3}.
\end{matrix}
\end{equation*}
It corresponds to the grading functional $\xiup_{\Phi}$ with 
\begin{equation*}
 \xiup_{\Phi}(e_{1})=1,\;\;\xiup_{\Phi}(e_{2})=1,\;\;\xiup_{\Phi}(e_{3})=0.
\end{equation*}
We consider the conjugation defined by 
\begin{equation*}
 \sigmaup(e_{1})=e_{3},\;\;\sigmaup(e_{2})=e_{2},\;\;  \sigmaup(e_{3})=e_{1}.
\end{equation*}
Then 
\begin{equation*}
 \Qq^{c}_{\Phi}\cap\bar{\Qq}^{c}_{\Phi}=\{2e_{2}-e_{1}-e_{3}\}.
\end{equation*}
Since $\qt_{\Phi}$ is maximal and  $\Qq^{c}_{\Phi}\,{\cap}\,\bar{\Qq}^{c}_{\Phi}{\neq}\emptyset,$
then is sufficient to check that 
$(\st_{\sigmaup},\qt_{\Phi})$ is weakly nondegenerate to find that it is also fundamental. 
\par The root $\betaup\,{=}\, 2e_{3}{-}e_{1}{-}e_{2}$ belongs to $\Qq_{\Phi}\,{\cap}\,\bar{\Qq}^{c}_{\Phi}.$ 
We have 
\begin{equation*}
\Qq^{\,c}_{\Phi}\cap \bar{\Qq}_{\Phi}=\{e_{1}-e_{3},\; e_{2}-e_{3},\; 2e_{1}-e_{2}-e_{3}\}.
\end{equation*}
Since $\xiup_{\Phi}$ equals one on every root of $\Qq^{\,c}_{\Phi}\,{\cap}\, \bar{\Qq}_{\Phi},$
a sequence satisfying \eqref{eq1.29} has at least three roots. We find indeed that 
\begin{equation*}
 e_{2}-e_{3},\; e_{2}-e_{3},\; e_{2}-e_{3}
\end{equation*}
is a sequence with the desired properties, proving that $(\st_{\sigmaup},\qt_{\Phi})$
is fundamental and has Levi  order three. 
\end{exam}
\par\medskip

\subsection{\textsc{Levi order of  orbits of the minimal type}}
Weak nondegeneracy for minimal orbits  
was characterized
in \cite[Thm.11.5]{AMN06} by using their description in terms 
of cross-marked Satake diagrams (see e.g. 
 \cite{Ara62,Satake1960}).
\par  
Let 
$\hg_{\R}$ be a maximally vectorial Cartan subalgebra
of $\st_{\sigmaup},$ $\hg$ its complexification and
$\Rad$ the root system  of $(\st,\hg).$
Then all roots in $\Rad_{\,\bullet}$ are compact. 
We can select a basis $\Bz$  such that the conjugate of any positive 
noncompact root stays positive. This condition defines 
an involution $\epi\,{:}\,\Bz{\to}\Bz,$ which keeps fixed
the elements of 
$\Bz_{\,\bullet}\,{=}\,\Bz\,{\cap}\,\Rad_{\,\bullet}$ 
and    
such that, for nonnegative $n_{\alphaup,\betaup}{\in}\Z,$  
\begin{equation} \label{satake}
\begin{cases}
\bar{\alphaup}=-\alphaup, & \forall\alphaup\in\Bz_{\bullet},\\
\bar{\alphaup}=\epi(\alphaup)+{\sum}_{\betaup\in\Bz_{\bullet}}n_{\alphaup,\betaup}\betaup,
&\forall \alphaup\in\Bz{\backslash}\Bz_{\bullet}.
\end{cases}
\end{equation}
\par 
The Satake  diagram $\Sigma_{\Bz}$ is obtained from $\Delta_{\Bz}$
 by painting black the roots in
$\Bz_{\bullet}$ and joining by an 
 arch the pairs of distinct simple roots $\alphaup_{1},\alphaup_{2}$ 
with $\epi(\alphaup_{1})=\alphaup_{2}.$ \par 
Minimal orbits correspond to \emph{cross-marked Satake diagrams}: they
are associated to parabolic $\qt_{\Phi}$ for which all roots in 
$\Qq_{\Phi}^{\,c}{\cap}\,\bar{\Qq}_{\Phi}^{\,n}$
are compact.  \par \smallskip 
Let us 
drop the assumption that
$\hg_{\R}$ is maximally vectorial.
The map $\thetaup\,{:}\,\alphaup\,{\mapsto}\,{-}\bar{\alphaup}$ 
induced on $\Rad$ by the Cartan involution 
(see \S\ref{s2.2})
acts on  
 $\Qq_{\Phi}^{\,c}\cap\bar{\Qq}^{\,n}_{\Phi}$, which is therefore the union of 
  its fixed points, which are roots in $\Rad_{\,\bullet}$,
 and of pairs $(\alphaup,{-}\bar{\alphaup})$ of distinct roots.
 \begin{defn} We say that the $CR$ algebra $(\st_{\sigmaup},\qt_{\Phi})$ and the 
 corresponding $CR$ manifold $\sfM_{\Phi,\sigmaup}$ are \emph{of the minimal type}
 if the roots in $\Qq_{\Phi}^{\,c}\cap\bar{\Qq}^{\,n}_{\Phi}$ are fixed by the Cartan involution, i.e. if 
\begin{equation}
\Qq^{\,c}_{\Phi}\cap\bar{\Qq}^{\,n}_{\Phi}\subseteq\Rad_{\,\bullet}\\
\label{eq2.15}\end{equation}
\end{defn}
\begin{lem} For a parabolic $CR$ algebra 
$(\st_{\sigmaup},\qt_{\Phi})$ the following are equivalent
to the fact that it is of the minimal type:
\begin{align}
\label{eq2.15a}
&\xiup_{\Phi}(\bar{\betaup})\geq{0},\;\;\forall \betaup\in\Qq^{\,c}_{\Phi}{\setminus}\Rad_{\,\bullet};
\\ 
\label{eq2.15b}
 &\xiup_{\Phi}(\bar{\betaup})=0,\;\;\forall \betaup\in(\Qq^{\,c}_{\Phi}\cap\bar{\Qq}_{\Phi}){\backslash}
 \Rad_{\,\bullet}.
\end{align}
\end{lem}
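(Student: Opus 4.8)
The plan is to reduce all three statements to conditions on the single integer $\xiup_{\Phi}(\bar{\betaup})$ as $\betaup$ ranges over $\Qq^{\,c}_{\Phi}$, using the sign descriptions of the partition pieces in Notation~\ref{n1}. First I would record that, since $\sigmaup$ is an involution, for any $\gammaup\in\Rad$ one has $\gammaup\in\bar{\Qq}_{\Phi}$ iff $\bar{\gammaup}\in\Qq_{\Phi}$, i.e. iff $\xiup_{\Phi}(\bar{\gammaup})\leq 0$; similarly $\gammaup\in\bar{\Qq}^{\,n}_{\Phi}\iff\xiup_{\Phi}(\bar{\gammaup})<0$, $\gammaup\in\bar{\Qq}^{\,c}_{\Phi}\iff\xiup_{\Phi}(\bar{\gammaup})>0$ and $\gammaup\in\bar{\Qq}^{\,r}_{\Phi}\iff\xiup_{\Phi}(\bar{\gammaup})=0$. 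I would also note that a root $\betaup\in\Rad_{\,\bullet}$ satisfies $\bar{\betaup}=-\betaup$, whence $\xiup_{\Phi}(\bar{\betaup})=-\xiup_{\Phi}(\betaup)$; this accounts for the exclusion of $\Rad_{\,\bullet}$ in \eqref{eq2.15a} and \eqref{eq2.15b}, since otherwise those roots would violate the stated inequalities.

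Next I would establish the equivalence of the minimal-type condition \eqref{eq2.15} with \eqref{eq2.15a}. By the sign dictionary, $\betaup\in\Qq^{\,c}_{\Phi}\cap\bar{\Qq}^{\,n}_{\Phi}$ means exactly $\xiup_{\Phi}(\betaup)>0$ and $\xiup_{\Phi}(\bar{\betaup})<0$, so \eqref{eq2.15} reads ``$\xiup_{\Phi}(\betaup)>0$ and $\xiup_{\Phi}(\bar{\betaup})<0$ imply $\betaup\in\Rad_{\,\bullet}$''. By elementary propositional logic this is the same as ``$\xiup_{\Phi}(\betaup)>0$ and $\betaup\notin\Rad_{\,\bullet}$ imply $\xiup_{\Phi}(\bar{\betaup})\geq 0$'', which is precisely \eqref{eq2.15a}. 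Thus this equivalence needs no computation beyond the translation.

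Finally I would prove \eqref{eq2.15a}$\iff$\eqref{eq2.15b}. The forward implication is immediate: for $\betaup\in(\Qq^{\,c}_{\Phi}\cap\bar{\Qq}_{\Phi})\setminus\Rad_{\,\bullet}$ membership in $\bar{\Qq}_{\Phi}$ gives $\xiup_{\Phi}(\bar{\betaup})\leq 0$, while \eqref{eq2.15a} gives $\xiup_{\Phi}(\bar{\betaup})\geq 0$, forcing $\xiup_{\Phi}(\bar{\betaup})=0$. The converse is the only point requiring care, and there the main (and minor) obstacle is remembering to treat separately the branch that \eqref{eq2.15b} does not constrain: given $\betaup\in\Qq^{\,c}_{\Phi}\setminus\Rad_{\,\bullet}$, if $\xiup_{\Phi}(\bar{\betaup})>0$ then \eqref{eq2.15a} holds trivially, whereas if $\xiup_{\Phi}(\bar{\betaup})\leq 0$ then $\betaup\in\bar{\Qq}_{\Phi}$, so $\betaup\in(\Qq^{\,c}_{\Phi}\cap\bar{\Qq}_{\Phi})\setminus\Rad_{\,\bullet}$ and \eqref{eq2.15b} yields $\xiup_{\Phi}(\bar{\betaup})=0\geq 0$. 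In both cases \eqref{eq2.15a} holds, closing the cycle of equivalences.
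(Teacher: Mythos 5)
Your proof is correct and takes essentially the same route as the paper: both arguments translate the minimal-type condition and the two displayed conditions into sign conditions on $\xiup_{\Phi}(\bar{\betaup})$, obtain the equivalence of \eqref{eq2.15} with \eqref{eq2.15a} as a direct contrapositive of the observation that $\Qq^{\,c}_{\Phi}\cap\bar{\Qq}^{\,n}_{\Phi}$ consists exactly of the roots of $\Qq^{\,c}_{\Phi}$ on which $\xiup_{\Phi}\circ\sigmaup$ is negative, and deduce \eqref{eq2.15a}$\iff$\eqref{eq2.15b} from $\xiup_{\Phi}(\bar{\betaup})\leq 0$ on $\bar{\Qq}_{\Phi}$ together with $\Rad=\bar{\Qq}_{\Phi}\cup\bar{\Qq}^{\,c}_{\Phi}$ and $\xiup_{\Phi}(\bar{\betaup})>0$ on $\bar{\Qq}^{\,c}_{\Phi}$. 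Your version is simply more explicit than the paper's two-sentence proof (the spelled-out case split in the converse and the side remark on roots of $\Rad_{\,\bullet}$), with no substantive difference in method.
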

\begin{proof} 
 \eqref{eq2.15a} is equivalent to \eqref{eq2.15b}.
Indeed,
since $\xiup_{\Phi}(\bar{\betaup})\,{\leq}\,0$ for all $\betaup\,{\in}\bar{\Qq}_{\Phi},$ 
clearly \eqref{eq2.15a} is a consequence of \eqref{eq2.15b}. The two are equivalent because
$\xiup_{\Phi}(\bar{\betaup}){>}0$ for $\betaup\,{\in}\,\bar{\Qq}^{\,c}_{\Phi}$ and
$\Rad\,{=}\,\bar{\Qq}_{\Phi}\,{\cup}\,\bar{\Qq}^{\,c}_{\Phi}.$ 
The equivalence of \eqref{eq2.15} with \eqref{eq2.15a} reduces to  
the observation that 
the elements of $\Qq_{\Phi}^{\,c}$ on which $\xiup_{\Phi}{\circ}\,\sigmaup$ is negative 
make the set $\Qq^{\,c}_{\Phi}\cap\bar{\Qq}^{\,n}_{\Phi}$.
 \end{proof}

\begin{exam} Keep the notation of Examp.\ref{ex2.12}. The cross-marked
Dynkin diagram of $\mathrm{B}_{3}$ 
 \begin{equation*} 
    \xymatrix@R=-.3pc{
    \alphaup_{1} & \alphaup_{2} & \alphaup_{3}\\
 \!\!\medcirc\!\!\ar@{-}[r]
&\!\!\medcirc\!\! \ar@{=>}[r]
&\!\!\medcirc\!\! 
\\
\times&&\times}
\end{equation*}
corresponds to 
\begin{equation*}
 \Phi=\{\alphaup_{1},\alphaup_{3}\},\;\; \xiup_{\Phi}(e_{i})= 
\begin{cases}
 2, & i{=}1,\\
 1, & i=2,3.
\end{cases}
\end{equation*}
Consider the conjugation 
\begin{equation*}
 \sigmaup(e_{1})=e_{2},\;\;\sigmaup(e_{2})=e_{1},\;\;\sigmaup(e_{3})\,{=}\,{-}e_{3}.
\end{equation*}
Since $\Phi\,{\subset}\,\Rad_{\,\bullet},$ by Prop.\ref{p2.2} the $CR$ algebra
$(\st_{\sigmaup},\qt_{\Phi})$ is fundamental.
We have 
\begin{align*}
 &\Qq^{\,c}_{\Phi}\cap\bar{\Qq}^{\,c}_{\Phi}=\{e_{1},\,e_{2},\, e_{1}{+}e_{2},\, e_{1}{-}e_{3},\,
 e_{2}{+}e_{3}\},\\
 &\Qq^{\,c}_{\Phi}\cap\bar{\Qq}_{\Phi}=\{e_{3},\,e_{1}{+}e_{3},\,  e_{1}{-}e_{2}\},\\
  &\Qq_{\Phi}\cap\bar{\Qq}^{\,c}_{\Phi}= \{{-}e_{3},\,e_{2}{-}e_{3},\,  e_{2}{-}e_{1}\}.
\end{align*}
This $(\st_{\sigmaup},\qt_{\Phi})$ is of the minimal type, because 
$$\Qq^{c}\,{=}\,(\Qq^{\,c}_{\Phi}\cap\bar{\Qq}^{\,c}_{\Phi})\,{\cup}\,\{e_{1}{-}e_{2},\,e_{3}\}
\,{\subset}\,(\Qq^{\,c}_{\Phi}\cap\bar{\Qq}^{\,c}_{\Phi})\,{\cup}\,\Rad_{\,\bullet}.$$
However, $(\st_{\sigmaup},\qt_{\Phi})$ is not the $CR$ algebra of the minimal orbit 
of a real form of $\SO_{7}(\C)$ in $\sfF_{\Phi},$ because, although $\alphaup_{1},\alphaup_{3}
\,{\in}\,\Rad_{\,\bullet}$ and $\bar{\alphaup}_{2}{=}\alphaup_{1}{+}\alphaup_{2}{+}2\alphaup_{3}
\,{\succ}\,0,$  showing that 
the basis $\alphaup_{1},\alphaup_{2},\alphaup_{3}$ defines an
$S$-chamber according to \cite{AMN06b},
the diagram obtained by
blackening the nodes $\alphaup_{1},\alphaup_{3}$ is not Satake.
The equalities
\begin{equation*}\begin{cases}
 ({-}e_{3}{+}(e_{1}{+}e_{3}){=}e_{3}\,{\in}\, \Qq^{\,c}_{\Phi}\cap\bar{\Qq}^{\,c}_{\Phi},\\
 (e_{2}{-}e_{3}){+}e_{3}{=}e_{2}\,{\in}\,
 \Qq^{\,c}_{\Phi}\cap\bar{\Qq}^{\,c}_{\Phi},\\
 (e_{2}{-}e_{1}){+}(e_{1}{+}e_{3}){=}e_{2}{+}e_{3}\,{\in}\,
 \Qq^{\,c}_{\Phi}\cap\bar{\Qq}^{\,c}_{\Phi},
 \end{cases}
\end{equation*}
show that $(\st_{\sigmaup},\qt_{\Phi})$ is  weakly nondegenerate.\par
We can choose real forms $\SO(2,5)$ or $\SO(3,4)$ compatible with the complex
symmetric bilinear form $\bil$  used to define $\SO_{7}(\C).$ Then $\sfM_{\Phi,\sigmaup}$
consists of pairs $(\ell_{1}\,{\subset}\,\ell_{3})$ with a $\bil_{\sigmaup}$-isotropic $\ell_{1}$
with $\ell_{1}{\cap}\bar{\ell}_{1}\,{=}\,\{0\}$ and an $\ell_{3}$ on which the restriction
of $\bil_{\sigmaup}$ has rank $1.$ 
\end{exam}

\begin{thm}\label{t2.16} 
A real orbit $\sfM_{\Phi,\sigmaup}$ of the minimal type is either
holomorphically degenerate or has Levi order less or equal two. 
\end{thm}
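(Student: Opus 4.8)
The plan is to prove the contrapositive form: a minimal-type $(\st_{\sigmaup},\qt_{\Phi})$ that is not holomorphically degenerate has Levi order at most $2$. Since holomorphic degeneracy is exactly the failure of weak nondegeneracy, this amounts to showing that every $\betaup\in\Qq_{\Phi}\setminus\bar{\Qq}_{\Phi}=\Qq_{\Phi}\cap\bar{\Qq}^{\,c}_{\Phi}$ of finite Levi order satisfies $\qq^{\sigmaup}_{\Phi}(\betaup)\leq 2$. Arguing by contradiction, I fix such a $\betaup$ with $\qq\coloneqq\qq^{\sigmaup}_{\Phi}(\betaup)\geq 3$ and a sequence $\alphaup_{1},\dots,\alphaup_{\qq}$ realizing \eqref{eq1.29}, and I track the grading functional $\xiup_{\Phi}$ along the partial sums $\gammaup_{h}=\betaup+\sum_{i\leq h}\alphaup_{i}$.

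First I collect the sign data. By Lemma~\ref{l1.10} each $\alphaup_{i}\in\bar{\Qq}_{\Phi}\cap\Qq^{\,c}_{\Phi}$, so $\xiup_{\Phi}(\alphaup_{i})\geq 1$ and the values $\xiup_{\Phi}(\gammaup_{h})$ increase strictly in $h$; moreover $\gammaup_{h}\in\Qq_{\Phi}\cap\bar{\Qq}^{\,c}_{\Phi}$ for $h<\qq$, whence $\xiup_{\Phi}(\gammaup_{h})\leq 0$ there. Combining the two facts gives, for $0\leq h\leq\qq-2$,
$$\xiup_{\Phi}(\gammaup_{h})\leq\xiup_{\Phi}(\gammaup_{\qq-1})-(\qq-1-h)\leq-1<0,$$
so in particular $\xiup_{\Phi}(\betaup)<0$ and $\xiup_{\Phi}(\betaup+\alphaup_{1})<0$.

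The decisive step is to read the minimal-type hypothesis as a positivity statement. Rewriting \eqref{eq2.15a} through $\sigmaup$, minimality says exactly that $\xiup_{\Phi}(\gammaup)\geq 0$ for every $\gammaup\in\bar{\Qq}^{\,c}_{\Phi}\setminus\Rad_{\,\bullet}$; since $\Rad_{\,\bullet}=\{\gammaup\mid\bar{\gammaup}=-\gammaup\}$ and each $\gammaup_{h}$ with $h<\qq$ lies in $\bar{\Qq}^{\,c}_{\Phi}$, any $\gammaup_{h}$ with $\xiup_{\Phi}(\gammaup_{h})<0$ is forced into $\Rad_{\,\bullet}$. By the previous display this applies to $\gammaup_{0}=\betaup$ and to $\gammaup_{1}=\betaup+\alphaup_{1}$. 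Because Lemma~\ref{l1.10}(iii) allows the $\alphaup_{i}$ to be permuted freely without changing the order, the same conclusion holds with $\alphaup_{1}$ replaced by any $\alphaup_{i}$, so $\betaup\in\Rad_{\,\bullet}$ and $\betaup+\alphaup_{i}\in\Rad_{\,\bullet}$ for all $i$. Since $\sigmaup$ is linear, $\Rad_{\,\bullet}$ is stable under root sums and differences; subtracting $\betaup$ yields $\alphaup_{i}\in\Rad_{\,\bullet}$ for every $i$, and then $\gammaup_{\qq}=\betaup+\sum_{i}\alphaup_{i}\in\Rad_{\,\bullet}$ as well.

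This is contradictory: $\gammaup_{\qq}\notin\Qq_{\Phi}\cup\bar{\Qq}_{\Phi}$ forces $\gammaup_{\qq}\in\Qq^{\,c}_{\Phi}\cap\bar{\Qq}^{\,c}_{\Phi}$, hence $\xiup_{\Phi}(\gammaup_{\qq})>0$, whereas $\gammaup_{\qq}\in\Rad_{\,\bullet}$ gives $\xiup_{\Phi}(\gammaup_{\qq})=-\xiup_{\Phi}(\bar{\gammaup}_{\qq})<0$. Thus $\qq\geq 3$ is impossible, which is the assertion. I expect the only delicate points to be the two bookkeeping checks flagged above: that the minimal-type inequality \eqref{eq2.15a} is genuinely equivalent to $\xiup_{\Phi}\geq 0$ on $\bar{\Qq}^{\,c}_{\Phi}\setminus\Rad_{\,\bullet}$, and that the permutation invariance of Lemma~\ref{l1.10} legitimately promotes the single membership $\gammaup_{1}\in\Rad_{\,\bullet}$ to $\alphaup_{i}\in\Rad_{\,\bullet}$ for all $i$; the closedness of $\Rad_{\,\bullet}$ under addition of roots is then immediate.
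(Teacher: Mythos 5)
Your proof is correct, and while it runs on the same fuel as the paper's argument --- the reformulation of the minimal-type condition \eqref{eq2.15} as $\xiup_{\Phi}\,{\geq}\,0$ on $\bar{\Qq}^{\,c}_{\Phi}{\setminus}\Rad_{\,\bullet}$ (exactly the display $(*)$ that opens the paper's proof), Lemma~\ref{l1.10} with its permutation invariance, and the linearity of $\sigmaup$ making $\Rad_{\,\bullet}$ closed under sums and differences of roots --- it assembles them along a recognizably different route. The paper argues in two stages: first, for $\betaup\,{\in}\,(\Qq_{\Phi}\cap\bar{\Qq}^{\,c}_{\Phi}){\setminus}\Rad_{\,\bullet}$, condition $(*)$ forces $\xiup_{\Phi}(\betaup)\,{=}\,0$, so by Cor.~\ref{cor1.11} such a root has Levi order $1$ or ${+}\infty$; second, for finite order $q\,{>}\,1$ it \emph{extracts} one $\alphaup_{i}\,{\notin}\,\Rad_{\,\bullet}$ (possible since $\gammaup_{q}\,{\in}\,\Qq^{\,c}_{\Phi}\cap\bar{\Qq}^{\,c}_{\Phi}$ cannot lie in $\Rad_{\,\bullet}$), permutes it to the front, notes $\betaup{+}\alphaup_{1}\,{\notin}\,\Rad_{\,\bullet}$, and concludes by minimality that its order is $q{-}1{=}1$. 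You instead kill $q\,{\geq}\,3$ in one stroke: the estimate $\xiup_{\Phi}(\gammaup_{h})\,{\leq}\,{-}(q{-}1{-}h)$, applied after permutation with $h\,{=}\,0,1$, pushes $\betaup$ and every $\betaup{+}\alphaup_{i}$ into $\Rad_{\,\bullet}$, hence all $\alphaup_{i}$ and finally $\gammaup_{q}$ into $\Rad_{\,\bullet}$, contradicting $\gammaup_{q}\,{\in}\,\Qq^{\,c}_{\Phi}\cap\bar{\Qq}^{\,c}_{\Phi}$. The two proofs are mirror images: the paper exploits the existence of a root outside $\Rad_{\,\bullet}$ to reduce the order, while you show that for $q\,{\geq}\,3$ no such root can exist, contradicting that one must. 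Your version buys self-containedness, bypassing Cor.~\ref{cor1.11} and the order-reduction step in favour of the explicit $\xiup_{\Phi}$-count already implicit in \eqref{eq2.10}; the paper's version buys finer structural output, namely that an order-$2$ root must lie in $\Rad_{\,\bullet}$ with $\betaup{+}\alphaup_{1}$ of order $1$, which its minimal-type examples then illustrate. Both of your flagged bookkeeping points are sound: \eqref{eq2.15a} transforms into $(*)$ because $\sigmaup$ is an involution preserving $\Rad_{\,\bullet}$ and carrying $\Qq^{\,c}_{\Phi}$ onto $\bar{\Qq}^{\,c}_{\Phi}$, and the promotion from $\gammaup_{1}$ to all $i$ is legitimate because your estimate uses only $\xiup_{\Phi}(\alphaup_{j})\,{\geq}\,1$ and $\xiup_{\Phi}(\gammaup_{q-1})\,{\leq}\,0$, both of which are permutation-independent by Lemma~\ref{l1.10}.
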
 
\begin{proof}
 Let $(\st_{\sigmaup},\qt_{\Phi})$ be a parabolic $CR$ algebra of the minimal type. Keeping the
 notation used throughout the section, we note that \eqref{eq2.15} can be rewritten in the
 form 
\begin{equation*}\tag{$*$}
 \xiup_{\Phi}(\betaup)\geq{0},\;\;\forall \betaup\,{\in}\bar{\Qq}^{\,c}_{\Phi}\backslash\Rad_{\,\bullet}.
\end{equation*}
Let $\betaup\,{\in}\,\Qq_{\Phi}\,{\cap}\,\bar{\Qq}_{\Phi}^{\,c}.$ If $\betaup\,{\notin}\,\Rad_{\,\bullet},$
then $\xiup_{\Phi}(\betaup)\,{=}\,0$ by ($*$) and hence, by Cor.\ref{cor1.11},
$\qq^{\sigmaup}_{\Phi}(\betaup)$ is either $1$ or ${+}\infty.$ \par
Let us consider now the case where 
$\qq^{\sigmaup}_{\Phi}(\betaup)$ is an integer $q{>}1.$ 
Then $\betaup\,{\in}\,\Rad_{\,\bullet}.$ 
Let $(\alphaup_{1},\hdots,\alphaup_{q})$ 
be a sequence satisfying \eqref{eq1.29} and thus the conditions in Lemma~\ref{l1.10}.
Since $\Qq^{\,c}_{\Phi}\,{\cap}\,\bar{\Qq}^{\,c}_{\Phi}\,\cap\,\Rad_{\,\bullet}\,{=}\,\emptyset,$
there is at least one root $\alphaup_{i}$ which does not belong to $\Rad_{\,\bullet}.$
By the Lemma we can assume it is $\alphaup_{1}.$ Then $\betaup{+}\alphaup_{1}$
belongs to $(\Qq_{\Phi}\,{\cap}\,\bar{\Qq}^{\,c}_{\Phi}){\backslash}\Rad_{\,\bullet}$ and therefore,
by the first part of the proof, $\qq^{\sigmaup}_{\Phi}(\betaup{+}\alphaup_{1}){=}1.$
This implies that $q{=}2.$ The proof is complete. 
\end{proof}
\begin{lem} 
 The parabolic $CR$ algebra $(\st_{\sigmaup},\qt_{\Phi})$ 
 of a minimal orbit is of the minimal type.
\end{lem}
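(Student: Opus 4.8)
The plan is to reduce the statement to the description of minimal orbits via cross-marked Satake diagrams recalled above, and then to observe that this description already encodes the defining condition \eqref{eq2.15} of a $CR$ algebra of the minimal type.

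First I would use the fact that, for a minimal orbit, the Cartan subalgebra $\hg_{\R}$ of $\st_{\sigmaup}$ sitting inside the isotropy $\st_{\sigmaup}{\cap}\,\qt_{\Phi}$ may be taken maximally vectorial, and I let $\hg$ be its complexification and $\thetaup$ the attached Cartan involution, which acts on $\Rad$ by $\thetaup(\alphaup)={-}\bar{\alphaup}$. With this choice, $\sfM_{\Phi,\sigmaup}$ is the minimal orbit exactly when $(\st_{\sigmaup},\qt_{\Phi})$ is encoded by a cross-marked Satake diagram, that is, by \cite[Thm.11.5]{AMN06}, exactly when every root of $\Qq^{\,c}_{\Phi}\cap\bar{\Qq}^{\,n}_{\Phi}$ is compact.

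The crucial step is then essentially definitional. A root $\alphaup$ can be called compact (or hermitian) only when it is fixed by $\thetaup$; since $\thetaup(\alphaup)={-}\bar{\alphaup}$, this happens precisely when $\bar{\alphaup}={-}\alphaup$, that is, when $\alphaup\in\Rad_{\,\bullet}$. Consequently the compactness of all roots of $\Qq^{\,c}_{\Phi}\cap\bar{\Qq}^{\,n}_{\Phi}$ yields the inclusion $\Qq^{\,c}_{\Phi}\cap\bar{\Qq}^{\,n}_{\Phi}\subseteq\Rad_{\,\bullet}$, which is exactly \eqref{eq2.15}; equivalently, one may record the condition in the form \eqref{eq2.15a}. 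Hence $(\st_{\sigmaup},\qt_{\Phi})$ is of the minimal type.

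I do not anticipate a serious obstacle: the one point deserving care is the identification of the two notions of \emph{compact root} and of membership in $\Rad_{\,\bullet}$ through the identity $\thetaup(\alphaup)={-}\bar{\alphaup}$, together with the verification that the maximally vectorial Cartan subalgebra underlying the Satake description can indeed be chosen inside the isotropy. Once these are in place the conclusion is immediate, with no residual computation.
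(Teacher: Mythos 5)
Your proof is correct, and it reaches the conclusion by a genuinely different mechanism from the paper's, even though both start from the same reduction to the Satake presentation of the minimal orbit (a maximally vectorial $\hg_{\R}$ in the isotropy and a base $\Bz$ adapted as in \eqref{satake}). You invoke the recalled characterization that for a minimal orbit all roots of $\Qq^{\,c}_{\Phi}\cap\bar{\Qq}^{\,n}_{\Phi}$ are \emph{compact}, and then use the definitional fact that compactness presupposes $\thetaup$-fixedness, i.e. membership in $\Rad_{\,\bullet}$, to obtain \eqref{eq2.15} outright. The paper never uses that compactness statement: it takes an arbitrary $\betaup\in\Qq^{\,c}_{\Phi}{\setminus}\Rad_{\,\bullet}$, observes that it is a positive noncompact root, applies the defining property \eqref{satake} of a Satake base to conclude $\bar{\betaup}\succ 0$, hence $\xiup_{\Phi}(\bar{\betaup})\geq 0$, thereby verifying the equivalent condition \eqref{eq2.15a} of the preceding lemma. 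In effect the paper re-derives your premise from the more primitive positivity property of Satake bases, so your route is a legitimate shortcut that trades the equivalence \eqref{eq2.15}$\Leftrightarrow$\eqref{eq2.15a} for a stronger recalled input, while the paper's version needs only \eqref{satake} and exercises the reformulation \eqref{eq2.15a} that also does the work in the proof of Thm.\ref{t2.16}. Two small caveats: the reference \cite[Thm.11.5]{AMN06} is cited in the paper for the characterization of \emph{weak nondegeneracy} of minimal orbits, not for the Satake-diagram description itself (the latter is recalled from \cite{AMN06,Ara62,Satake1960}); and your identification of ``compact'' with ``in $\Rad_{\,\bullet}$'' is exactly right here, but only because the Cartan subalgebra is maximally vectorial, so that all roots of $\Rad_{\,\bullet}$ are compact and conversely --- a point you correctly flag as needing care.
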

\begin{proof}
 Suppose that $\Phi$ is the set of crossed roots in a  
 cross-marked Satake diagram. 
 Since all roots $\betaup$ 
 in $\Qq^{\,c}_{\Phi}$ are positive, 
by  \eqref{satake}, if $\betaup\,{\in}\,{\Qq}^{\,c}_{\Phi}{\setminus}\Rad_{\,\bullet},$ 
then its conjugate 
$\bar{\betaup}$ is positive, and hence has $\xiup_{\Phi}(\betaup){\geq}0.$
This shows that \eqref{eq2.15} is valid, i.e. that $\sfM_{\Phi,\sigmaup}$ is of
the minimal type.
\end{proof}
\begin{cor} A minimal orbit $\sfM_{\Phi,\sigmaup}$ is either holomorphically degenerate or 
has
Levi order   less or equal to two. \qed
\end{cor}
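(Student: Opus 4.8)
The plan is to obtain the statement directly by chaining the two results that immediately precede it, with no new argument required. First I would invoke the Lemma just proved, which asserts that the parabolic $CR$ algebra $(\st_{\sigmaup},\qt_{\Phi})$ attached to a minimal orbit $\sfM_{\Phi,\sigmaup}$ is of the minimal type, i.e.\ satisfies the defining inclusion $\Qq^{\,c}_{\Phi}\cap\bar{\Qq}^{\,n}_{\Phi}\subseteq\Rad_{\,\bullet}$ of \eqref{eq2.15}. This is the only place where minimality genuinely enters: for a cross-marked Satake diagram every root of $\Qq^{\,c}_{\Phi}$ is positive, so by the Satake relations \eqref{satake} its conjugate $\bar{\betaup}$ is again positive and therefore has $\xiup_{\Phi}(\bar{\betaup})\geq 0$, which is exactly the characterization \eqref{eq2.15a} of the minimal type.

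Once membership in the minimal-type class has been recorded, I would simply apply Theorem~\ref{t2.16}, whose conclusion is precisely the stated dichotomy for an arbitrary orbit of the minimal type: either holomorphic degeneracy, or finite Levi order at most two. Since $\sfM_{\Phi,\sigmaup}$ is of the minimal type by the Lemma, the corollary follows at once; hence the $\qed$ in the statement is warranted and no further computation is needed.

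There is no real obstacle at this final stage, because the substantive content has already been expended upstream: the root-theoretic bound $\qq(\betaup)\leq 4$ of Proposition~\ref{pr1.11}, and the argument of Theorem~\ref{t2.16} showing that for a minimal-type algebra any finite Levi order $q>1$ forces the existence of a root $\alphaup_{1}\notin\Rad_{\,\bullet}$ whose addition drops the order of $\betaup+\alphaup_{1}$ to $1$, whence $q=2$. The corollary is therefore a bookkeeping consequence of presenting the minimal orbit as a special case. The one point I would double-check is that it is the hypotheses of Theorem~\ref{t2.16} that are invoked, and not those of the stronger Theorem~\ref{thm1.11}, which additionally requires weak nondegeneracy; but since \emph{of the minimal type} is the sole hypothesis of Theorem~\ref{t2.16}, and the Lemma supplies it unconditionally for minimal orbits, the degenerate alternative is correctly absorbed into the dichotomy and nothing further is required.
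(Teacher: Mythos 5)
Your proposal is correct and is exactly the paper's intended argument: the Corollary carries only a \qed because it follows immediately by combining the preceding Lemma (the parabolic $CR$ algebra of a minimal orbit is of the minimal type) with Theorem~\ref{t2.16} (the dichotomy for orbits of the minimal type). Your closing check is also apt: Theorem~\ref{t2.16} assumes only the minimal-type condition, so the holomorphically degenerate alternative is already built into its conclusion and no nondegeneracy hypothesis needs to be verified.
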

 \begin{exam} \label{ex2.5}
 Consider the $CR$ algebra described by the cross-marked Satake diagram
\begin{equation*} 
    \xymatrix@R=-.3pc{\!\!\medcirc\!\!\ar@{-}[r]\ar@{<->}@/^1pc/[rr]
&\!\!\medbullet\!\! \ar@{-}[r]&\!\!\medcirc\!\!\\
  &\times}
\end{equation*}
It is associated to the minimal orbit $\sfM_{\Phi,\sigmaup}$ of $\SU(1,3)$ is the
Grassmannian  of isotropic two-planes of $\C^{4}$ for 
a hermitian symmetric form of signature~$(1,3).$  \par
Here $\st\,{\simeq}\,\slt_{4}(\C),$ $\Rad\,{=}\,\{{\pm}(e_{i}-e_{j})\,{\mid}\, 1{\leq}i{<}j{\leq}4\},$
$\Bz\,{=}\,\{\{e_{1}{-}e_{2},\,e_{2}{-}e_{3},\,e_{3}{-}e_{4}\} $ for an orthonormal basis
$e_{1},e_{2},e_{3},e_{4}$ of $\R^{4},$ $\Phi\,{=}\,\{e_{2}{-}e_{3}\}$, 
\begin{equation*}
 \xiup(e_{i})= 
\begin{cases}
 1, & i{=}1,2,\\
 0, & i{=}3,4,
\end{cases} \quad \
\begin{cases}
 \sigmaup(e_{1})={-}e_{4},\;\; \sigmaup(e_{2})={-}e_{2},\\
 \sigmaup(e_{3})={-}e_{3},\;\; \sigmaup(e_{4})={-}e_{1}.
\end{cases}
\end{equation*}
We obtain 
\begin{align*}
 &\Qq^{c}_{\Phi}\cap\bar{\Qq}^{c}_{\Phi}=\{e_{1}-e_{4}\},\\
 &\bar{\Qq}_{\Phi}\cap\Qq^{c}_{\Phi}=\{e_{1}-e_{3},e_{2}-e_{3},e_{2}-e_{4}\},\\
 &\Qq_{\Phi}\cap\bar{\Qq}^{c}_{\Phi}=\{e_{3}-e_{4},e_{3}-e_{2}, e_{1}-e_{2}\}.
\end{align*}
Since $\Qq^{c}_{\Phi}\cap\bar{\Qq}^{c}_{\Phi}$ is nonempty, 
$e_{1}{-}e_{4}\,{=}\,(e_{3}{-}e_{4}){+}(e_{1}{-}e_{3})$
and $\qt_{\Phi}$ is maximal, we obtain that $(\st_{\sigmaup},\qt_{\Phi})$
is fundamental and weakly nondegenerate. Since $\xiup_{\Phi}(e_{3}{-}e_{2}){=}{-}1$
and $\xiup_{\Phi}$ is $1$ on all 
the elements of $\bar{\Qq}_{\Phi}\cap\Qq^{c}_{\Phi},$ the Levi order 
is at least, and thus equal,  by Thm.\ref{t2.16}, to $2$. We have in fact
\begin{equation*}
 e_{1}-e_{4}=(e_{3}-e_{2})+(e_{1}{-}e_{3})+(e_{2}{-}e_{4}),\;\;
 e_{1}-e_{4}=(e_{1}-e_{2})+(e_{2}{-}e_{4}).
\end{equation*}
\end{exam}
\begin{exam}
  Consider the $CR$ algebra described by the cross-marked Satake diagram 
\begin{equation*} 
    \xymatrix@R=-.3pc{
\alphaup_{1}&\alphaup_{2}&\alphaup_{3}&\alphaup_{4}&\alphaup_{5}\\    
    \!\!\medbullet\!\! \ar@{-}[r]&\!\!\medcirc\!\!\ar@{-}[r]
&\!\!\medbullet\!\! \ar@{-}[r]&\!\!\medcirc\!\!\ar@{-}[r]
&\!\!\medbullet\!\! 
\\
  &&\times}
\end{equation*}
Here $\Rad\,{=}\,\{{\pm}(e_{i}{-}e_{j})\,{\mid}\,1{\leq}i{<}j{\leq}6\}$, $\alphaup_{i}\,{=}\,e_{i}{-}e_{i+1},$
$\Phi\,{=}\,\{\alphaup_{3}\},$ 
\begin{equation*} \xiup_{\Phi}(e_{i})= 
\begin{cases}
 1, & \text{for $1{\leq}i{\leq}3,$}\\
 0, & \text{for $4{\leq}i{\leq}6,$}
\end{cases}\qquad 
 \sigmaup(e_{i})= 
\begin{cases}
 e_{i+1}, &\text{if $i$ is odd,}\\
 e_{i-1},&\text{if $i$ is even.}
\end{cases}
\end{equation*}
It corresponds to the $CR$ algebra $(\st_{\sigmaup},\qt_{\Phi}),$ with $\st_{\sigmaup}\,{\simeq}\,\slt_{3}(\Hb),$ 
of the grassmannian $\sfM_{\Phi,\sigmaup}$ 
 of $3$-planes of $\C^{6}\,{\simeq}\,\Hb^{3}$ 
containing a quaternionic line. We have 
\begin{align*}
& \Qq^{\,c}_{\Phi}\cap\bar{\Qq}^{\,c}_{\Phi}=\{e_{1}{-}e_{5},\, e_{1}{-}e_{6},\, e_{2}{-}e_{5},\, e_{2}{-}e_{6}\},\\
&\Qq^{\,c}_{\Phi}\cap\bar{\Qq}_{\Phi}=\{e_{1}{-}e_{4},\, e_{2}{-}e_{4},\, e_{3}{-}e_{4},\, e_{3}{-}e_{5}, 
e_{3}{-}e_{6}\},\\
&\Qq_{\Phi}\cap\bar{\Qq}_{\Phi}^{\,c}=\{e_{2}{-}e_{3},\, e_{1}{-}e_{3},\, e_{4}{-}e_{3},\, e_{4}{-}e_{6}, 
e_{4}{-}e_{5}\}
\end{align*}
Since $\qt_{\Phi}$ is maximal, it suffices to note that $(e_{4}{-}e_{5}){+}(e_{1}{-}e_{4}){=}
e_{1}{-}e_{5}\,{\in}\,\Qq^{\,c}_{\Phi}\cap\bar{\Qq}^{\,c}_{\Phi}$ to conclude that
$(\st_{\sigmaup},\qt_{\Phi})$ is fundamental and weakly nondegenerate. \par
We have $\Qq_{\Phi}\,{\cap}\,\bar{\Qq}_{\Phi}^{\,c}\,{\cap}\,\Rad_{\,\bullet}\,{=}\,\{e_{4}{-}e_{3}\}.$ 
Since both 
\begin{equation*}
 (e_{4}{-}e_{3}){+}(e_{1}{-}e_{4})=e_{1}{-}e_{3}\in\Qq_{\Phi}\cap\bar{\Qq}_{\Phi}^{\,c} ,\;\; 
  (e_{4}{-}e_{3}){+}(e_{2}{-}e_{4})=e_{2}{-}e_{3}\in\Qq_{\Phi}\cap\bar{\Qq}_{\Phi}^{\,c}
\end{equation*}
we get $\qq^{\sigmaup}_{\Phi}(e_{4}{-}e_{3})\,{=}\,2,$ showing that the Levi order  of 
$(\st_{\sigmaup},\qt_{\Phi})$ equals two.
\end{exam} 
\begin{exam} \label{ex2.70}
 The $CR$ algebra  
 described by the cross-marked Satake diagram 
\begin{equation*} 
    \xymatrix@R=-.15pc{
     \!\!\medcirc\!\!\ar@{-}[r]\ar@{<->}@/_1pc/[ddd]
&\!\!\medcirc\!\! \ar@{-}[r]\ar@{<->}@/_1pc/[ddd]
&\!\!\medcirc\!\!\ar@{<->}@/_1pc/[ddd]\ar@{-}[r]
&\!\!\medcirc\!\!\ar@{<->}@/^1pc/[ddd]\ar@{-}[r]
&\!\!\medcirc\!\! \ar@{-}[r]\ar@{<->}@/^1pc/[ddd]
&\!\!\medcirc\!\! \ar@{<->}@/^1pc/[ddd]\\
\times&&&\times&\times\\
\quad\\
\!\!\medcirc\!\!\ar@{-}[r]
&\!\!\medcirc\!\! \ar@{-}[r]
&\!\!\medcirc\!\!\ar@{-}[r]
&\!\!\medcirc\!\!\ar@{-}[r]
&\!\!\medcirc\!\!\ar@{-}[r]
&\!\!\medcirc\!\! \\
  &\times&\times&&&\times}
\end{equation*}
corresponding to $(\st_{\sigmaup},\qt_{\Phi}),$ with $\st_{\sigmaup}\,{\simeq}\,\slt_{7}(\C),$ 
is fundamental and weakly nondegenerate. This can be proved e.g. by applying the criteria
in \cite{AMN06}. 
Since $\Rad_{\,\bullet}{=}\emptyset,$
its Levi order 
is one.  
\end{exam}
\begin{exam} Keep the notation of Example \ref{ex2.12} and consider the cross-marked
Dynkin diagram of $\mathrm{B}_{3}$ 
 \begin{equation*} 
    \xymatrix@R=-.3pc{
    \alphaup_{1} & \alphaup_{2} & \alphaup_{3}\\
 \!\!\medcirc\!\!\ar@{-}[r]
&\!\!\medcirc\!\! \ar@{=>}[r]
&\!\!\medcirc\!\! 
\\
\times}
\end{equation*}
corresponding to 
\begin{equation*}
 \Phi=\{\alphaup_{1}\},\;\; \xiup_{\Phi}(e_{i})= 
\begin{cases}
 1, & i{=}1,\\
 0, & i=2,3.
\end{cases}
\end{equation*}
Consider the conjugation 
\begin{equation*}
 \sigmaup(e_{1})={-}e_{2},\;\;\sigmaup(e_{2})={-}e_{1},\;\;\sigmaup(e_{3})\,{=}\,e_{3}.
\end{equation*}
Then, for the corresponding $CR$ algebra $(\st_{\sigmaup},\qt_{\Phi})$,  
we have 
\begin{align*}
 &\Qq^{\,c}_{\Phi}\cap\bar{\Qq}^{\,c}_{\Phi}=\{e_{1}{-}e_{2}\},\\
 &\Qq^{\,c}_{\Phi}\cap\bar{\Qq}_{\Phi}=\{e_{1},\,e_{1}{+}e_{2},\, e_{1}{+}e_{3},\, e_{1}{-}e_{3}\},\\
  &\Qq_{\Phi}\cap\bar{\Qq}^{\,c}_{\Phi}= \{{-}e_{2},\,{-}e_{1}{-}e_{2},\, -e_{2}{+}e_{3},\, {-}e_{2}{-}e_{3}\}.
\end{align*}
Since $\xiup_{\Phi}(\gammaup){=}1$ for all $\gammaup\,{\in}\,\Qq^{\,c}_{\Phi}\cap\bar{\Qq}_{\Phi},$
 and $\xiup_{\Phi}(\gammaup){\geq}{-}1$ for all $\gammaup\,{\in}\,\Qq^{\,c}_{\Phi}\cap\bar{\Qq}_{\Phi},$
 the Levi order  of $(\st_{\sigmaup},\qt_{\Phi})$ is two. 
 Then $(\st_{\sigmaup},\qt_{\Phi})$ is a 
 $CR$ algebra is of the minimal type, although is not the $CR$ algebra of a minimal orbit.
\end{exam}
\begin{rmk} It was observed in \cite{AMN06b} that a parabolic $CR$ algebra 
$(\st_{\sigmaup},\qt_{\Phi})$ can always be described by using a base $\Bz$ associated to
an \textit{$S$-chamber}: this means one with  $\bar{\alphaup}\,{\succ}\,0$ for all
$\alphaup\,{\in}\Bz{\setminus}(\Phi{\cup}\Bz_{\,\bullet}).$ 
 The condition of being of \textit{the minimal type} translates for this choice of $\Bz$  
 into the fact that
$\bar{\alphaup}\,{\succ}\,0$ also for the elements in $\Phi{\setminus}\Bz_{\,\bullet}.$ 
The real dimension of $\sfM_{\Phi,\sigmaup}$ is the difference 
$\dim_{\C}(\st){-}\dim_{\C}(\qt_{\Phi}{\cap}\bar{\qt}_{\Phi}),$ i.e.
$\#\Rad{-}\,\#(\Qq_{\Phi}{\cap}\bar{\Qq}_{\Phi})$. Thus, in case $\Phi$ contains a root
$\alphaup\,{\notin}\,\Bz_{\,\bullet}$ with $\bar{\alphaup}\,{\prec}\,0,$ the symmetry with
respect to $\alphaup$ yields a new basis $\Bz'$ that, with the crosses in the same
positions, describes a new real orbit whose dimension is smaller by one unit.  
Then, parametrizing the real orbits 
that we can describe, after having made a  
fixed choice of $\hg_{\R}$, 
by using the Weyl chambers of $\Rad,$ those
of the minimal type are a sort of \textit{local minima} with respect to dimension. 
One has to be cautious because, unless $\hg_{\R}$ is maximally vectorial, 
there can be  several inequivalent choices of $\Bz$ 
such that $\bar{\alphaup}\,{\succ}\,0$
for all $\alphaup\,{\in}\,\Bz{\setminus}\Bz_{\,\bullet}$ that we can look at as
yielding different \textit{local minima} for the dimension of a class of real orbits.
\end{rmk}
\subsection{\textsc{Further examples}} 
We already showed that there are weakly nondegenerate $CR$ algebras $(\st_{\sigmaup},\qt_{\Phi})$
of Levi order $3,$ which, by Thm.\ref{t2.16}, are not of the minimal type. In 
this subsection we exhibit  
examples of  weakly nondegenerate parabolic $CR$ algebras which are not of
the minimal type and have Levi orders $1,2.$ 
\begin{exam} Consider $\slt_{3}(\C)$ as a simple real Lie algebra. Its complexification is the
direct sum of two copies of $\slt_{3}(\C).$ Its root system can be described, after fixing
orthogonal basis $e_{1},e_{2},e_{3},e_{4}$ and $e'_{1},e'_{2},e'_{3},e'_{4}$ of two copies
of $\R^4,$ by 
\begin{equation*}
 \Rad=\{{\pm}(e_i-e_j)\mid 1{\leq}i{<}j{\leq}4\}\cup\{{\pm}(e'_i-e'_j)\mid 1{\leq}i{<}j{\leq}4\}.
\end{equation*}
Let us consider the cross-marked Dynkin diagram 
\begin{equation*} 
    \xymatrix@R=-.3pc{ \alphaup_1 &\alphaup_2&\alphaup_3\\
    \!\!\medcirc\!\!\ar@{-}[r]
&\!\!\medcirc\!\! \ar@{-}[r]&\!\!\medcirc\!\! \\
\times &&\times\\
\alphaup'_1 &\alphaup'_2&\alphaup'_3
\\
    \!\!\medcirc\!\!\ar@{-}[r]
&\!\!\medcirc\!\! \ar@{-}[r]&\!\!\medcirc\!\! \\
\times &&\times}
\end{equation*} 
where $\alphaup_i{=}(e_{i}{-}e_{i+1})$ and $\alphaup'_i{=}(e'_{i}{-}e'_{i+1}),$
with $\Phi{=}\{\alphaup_1,\alphaup_3\}{\cup}\{\alphaup'_1,\alphaup'_3\}.$
\par
Let us fix the conjugation 
\begin{equation*}
 \sigmaup(e_{i})= 
\begin{cases}
 e'_{i+1}, & i=1,3,\\
 e'_{i-1}, & i=2,4,
\end{cases}\quad \sigmaup(e'_{i})= 
\begin{cases}
 e_{i+1}, & i=1,3,\\
 e_{i-1}, & i=2,4,
\end{cases}
\end{equation*}
Then 
\begin{align*}
 \Qq^{\,c}_{\Phi}\cap\bar{\Qq}^{\,c}_{\Phi}&=\{e_{1}{-}e_{3},\,e_{2}{-}e_{4}\}\cup\{e'_{1}{-}e'_{3},\,e'_{2}{-}e'_{4}\},\\
  \Qq^{\,c}_{\Phi}\cap\bar{\Qq}_{\Phi}&=\{e_{1}{-}e_{2},\, e_{1}{-}e_{4},\,e_{3}{-}e_{4}\}
  \cup \{e'_{1}{-}e'_{2},\, e'_{1}{-}e'_{4},\,e'_{3}{-}e'_{4}\},\\
   \Qq_{\Phi}\cap\bar{\Qq}^{\,c}_{\Phi}&=\{e_{2}{-}e_{1},\, e_{2}{-}e_{3},\, e_{4}{-}e_{3}\}
   \cup \{e'_{2}{-}e'_{1},\, e'_{2}{-}e'_{3},\, e'_{4}{-}e'_{3}\}.
\end{align*}
Since all roots in $\Phi$ have a negative conjugate, the parabolic $CR$ algebra 
$(\st_{\sigmaup},\qt_{\Phi})$ is fundamental. It is not of the minimal type because $\Rad_{\,\bullet}
\,{=}\,\emptyset$ and 
$$\Qq^{\,n}_{\Phi}{\cap}\bar{\Qq}^{\,c}_{\Phi}\,{=}\,\{e_{2}{-}e_{1},\, e_{4}{-}e_{3}\}
\cup \{e'_{2}{-}e'_{1},\, e'_{4}{-}e'_{3}\}\neq\emptyset.$$
Let us check that $(\st_{\sigmaup},\qt_{\Phi})$ has Levi order $1.$ We get indeed 
\begin{align*}
& (e_{2}{-}e_{1})+(e_{1}{-}e_{4})=(e_{2}{-}e_{4}), & (e'_{2}{-}e'_{1})+(e'_{1}{-}e'_{4})=(e'_{2}{-}e'_{4}),\\
& (e_{2}{-}e_{3})+(e_{3}{-}e_{4})=(e_{2}{-}e_{4}), & (e'_{2}{-}e'_{3})+(e'_{3}{-}e'_{4})=(e'_{2}{-}e'_{4}),\\
& (e_{2}{-}e_{3})+(e_{1}{-}e_{2})=(e_{1}{-}e_{3}) ,& (e'_{2}{-}e'_{3})+(e'_{1}{-}e'_{2})=(e'_{1}{-}e'_{3}),\\
& (e_{4}{-}e_{3})+(e_{1}{-}e_{4})=(e_{1}{-}e_{3}) ,& (e'_{4}{-}e'_{3})+(e'_{1}{-}e'_{3})=(e'_{1}{-}e'_{3}).
\end{align*}
This also shows that $(\st_{\sigmaup},\qt_{\Phi})$ is weakly nondegnerate. 
The orbit $\sfM_{\Phi,\sigmaup}$ is a $CR$ manifold of $CR$ dimension $6$ and $CR$ codimension
$4.$ Its points are quadruples $(\ell_{1},\ell_{1}',\ell_{3},\ell_{3}')$ of linear subspaces of
a $\C^{4}\,{\simeq}\,\Hb^{2}$ with $\ell_{1},\ell'_{1}$ complex lines such that $\ell_1{+}\ell_{1}'$
is a quaternionic line and $\ell_{3},\,\ell_{3}'$  complex hypersurfaces with
$\ell_{3}{\cap}\ell'_{3}\,{=}\,\ell_1{+}\ell'_{1}.$
\end{exam}
\begin{exam} Consider a root system 
\begin{equation*}
 \Rad=\{{\pm}e_{i}{\pm}e_j\mid 1{\leq}i{<}j{\leq}4\}
\end{equation*}
of type $\mathrm{D}_{4}$ and the maximal parabolic $\qt_{\Phi}$
described by the cross-marked Dynkin diagram
   \begin{equation*}
 \xymatrix@R=-.3pc{
 \alphaup_3\\
\!\!
\medcirc\!\!\! \ar@{-}[rdd] \\
&\alphaup_{2} &\alphaup_{1}\\
 & \!\!\medcirc\!\!\! \ar@{-}[r]  & \!\!\medcirc\!\!\! 
\\
&\times\\
\!\!
\medcirc\!\!\! \ar@{-}[ruu]
\\
 \alphaup_{4}}
\end{equation*}
Here $\alphaup_{i}{=}e_{i}{-}e_{i+1},$ for $1{\leq}i{\leq}3$ and $\alphaup_4{=}e_{3}{+}e_{4},$
$\Phi\,{=}\,\{\alphaup_{2}\},$  
\begin{equation*}
 \xiup_{\Phi}(e_{i})= 
\begin{cases}
 1, & i=1,2,\\
 0, & i=3,4.
\end{cases}
\end{equation*}
With the conjugation 
\begin{equation*}
 \sigmaup(e_{1})=e_{4},\;  \sigmaup(e_{2})={-}e_{3},\; \sigmaup(e_{3})={-}e_{2},\; \sigmaup(e_{4})=e_{1},
\end{equation*}
we obtain 
\begin{align*}
 \Qq^{\,c}_{\Phi}\cap\bar{\Qq}^{\,c}_{\Phi}&=\{e_{1}{+}e_{4},\, e_{2}{+}e_{4},\,e_{1}{-}e_{3},\,e_{2}{-}e_{3}\}
 \\
\Qq_{\Phi}\cap\bar{\Qq}_{\Phi}&=\{e_{1}{+}e_{2},\, e_{1}{+}e_{3},\,e_{2}{+}e_{3},\,e_{1}{-}e_{4},
\, e_{2}{-}e_{4}\}
  \\
 \Qq^{\,c}_{\Phi}\cap\bar{\Qq}^{\,c}_{\Phi}&=\{e_{4}{-}e_{3},\, e_{4}{-}e_{2},\,{-}e_{2}{-}e_{3},\,e_{4}{-}e_{1},
 \, {-}e_{3}{-}e_{1}\},\\
 (\Qq^{\,n}_{\Phi}\cap\bar{\Qq}^{\,c}_{\Phi}){\backslash}\Rad_{\,\bullet}&=\{e_{4}{-}e_{2},\, {-}e_{3}{-}e_1\} .
\end{align*}
It is easy to check, using the fact that $\qt_{\Phi}$ is maximal, that $(\st_{\sigmaup},\qt_{\Phi})$ 
is fundamental and weakly nondegenerate; moreover the last line of the equalities above
shows that $(\st_{\sigmaup},\qt_{\Phi})$ is not of the minimal type. To check that $(\st_{\sigmaup},\qt_{\Phi})$
is Levi nondegenerate (i.e. has Levi order $1$) we observe that 
\begin{align*}
 &(e_{4}{-}e_{3}) + (e_{1}{-}e_{4})= (e_{1}{-}e_{3}),\\
 &(e_{4}{-}e_{2}) + (e_{1}{+}e_{2})= (e_{1}{+}e_{4}),\\
&({-}e_{2}{-}e_{3}) + (e_{1}{+}e_{2})= (e_{1}{-}e_{3}),\\
 &(e_{4}{-}e_{1}) + (e_{1}{+}e_{2})= (e_{2}{+}e_{4}),\\
 &({-}e_{1}{-}e_{3}) + (e_{1}{+}e_{2})= (e_{2}{-}e_{3}).
\end{align*}
\end{exam}
\begin{exam}  Consider a root system 
\begin{equation*}
\Rad\,{=}\,\{{\pm}(e_i{+}e_j)\,{\mid}\,1{\leq}i{\leq}j{\leq}3\}\,
{\cup}\,\{{\pm}(e_i{-}e_j)\,{\mid}\,1{\leq}i{<}j{\leq}3\}\end{equation*}
of type $\mathrm{C}_{3}$ and
the cross-marked
Dynkin diagram
 \begin{equation*} 
    \xymatrix@R=-.3pc{
    \alphaup_{1} & \alphaup_{2} & \alphaup_{3}\\
 \!\!\medcirc\!\!\ar@{-}[r]
&\!\!\medcirc\!\! \ar@{<=}[r]
&\!\!\medcirc\!\! 
\\
&\times}
\end{equation*}
with $\alphaup_1{=}e_{1}{-}e_2,$ $\alphaup_{2}{=}e_2{-}e_3,$ $\alphaup_{3}{=}2e_{3}$ and 
$\Phi=\{\alphaup_{2}\}$ so that $$ \xiup_{\Phi}(e_{i})= 
\begin{cases}
 1, & i{=}1,2,\\
 0, & i=3.
\end{cases} $$Consider the conjugation 
\begin{equation*}
 \sigmaup(e_{1})=e_{3},\;\;\sigmaup(e_{2})={-}e_{2},\;\;\sigmaup(e_{3})\,{=}\,e_{1}.
\end{equation*}
We obtain 
\begin{align*}
 \Qq^{\,c}_{\Phi}\cap\bar{\Qq}^{\,c}_{\Phi}&=\{e_{1}{+}e_{3}\},\\
 \Qq^{\,c}_{\Phi}\cap\bar{\Qq}^{\,c}_{\Phi}&=\{2e_{1},\,2e_{2},\, e_{1}{+}e_{2},\, 
 e_{2}{+}e_{3},\, e_{1}{-}e_{3},\, e_{2}{-}e_{3}\}\\
  \Qq^{\,c}_{\Phi}\cap\bar{\Qq}^{\,c}_{\Phi}&=\{ 2e_{3},\,{-}2e_{2},\, e_{3}{-}e_{2},\, 
 e_{1}{-}e_{2},\, e_{3}{-}e_{1},\, {-}e_{1}{-}e_{2}\}.
 \end{align*}
Since the parabolic $\qt_{\Phi}$ is maximal, it is easy to check that $(\st_{\sigmaup},\qt_{\Phi})$
is fundamental and weakly nondegenerate. It has Levi order two, as one can check from 
\begin{align*}
 & 2e_{3} + (e_{1}{-}e_{3})=(e_{1}{+}e_{3}), && {-}2e_{2}+(e_{1}{+}e_{2})+(e_{2}{+}e_{3})=(e_{1}{+}e_{3})\\
 &(e_{3}{-}e_{2})+(e_{1}{+}e_{2})=(e_{1}{+}e_{3}), 
 && (e_{1}{-}e_{2}) + (e_{2}{+}e_{3})=(e_{1}{+}e_{3}), \\
 &(e_{3}{-}e_{1})+2e_{1}=(e_{1}{+}e_{3}), &&
 ({-}e_1{-}e_{2})+2e_{1}+(e_{2}{+}e_{3})=(e_{1}{+}e_{3}).
\end{align*}
\end{exam}
\section{\textsc{Weakly nondegenerate $CR$ manifolds with larger Levi orders}}\label{s3}
Fix any integer $\qq{\geq}1.$ 
In this last section we discuss in detail the  example of a homogeneous
$CR$ manifold $\sfM$ of $CR$ dimension $\qq{+}1$ and $CR$ codimension
$1$ which is fundamental and weakly nondegenerate of Levi order $\qq.$
\par \smallskip
The compact group $\SU(2)$ acts transitively on the complex projective
line $\CP^{1}.$ The homogeneous complex structure of $\CP^{1}$ 
can be defined by
the totally complex
$CR$ algebra $(\su(2),\bt),$ where $\su(2)$ is the real Lie algebra
of anti-Hermitian $2{\times}2$ matrices and $\bt$ a Borel subalgebra
of its complexification $\slt_{2}(\C).$ This $CR$ algebra corresponds to the 
simple
cross-marked Satake diagram 
\begin{equation*}
 \begin{aligned}
\alphaup\\[-7pt]
 \medbullet \\[-7pt]
 \times
\end{aligned}
\end{equation*}
The root system of the complexification $\slt_{2}(\C)$ is $\Rad\,{=}\,\{{\pm}(e_{1}{-}e_{2})\}.$
and we take $\alphaup\,{=}\,(e_{1}{-}e_{2}),$ with fundamental weight $\omegaup\,{=}\,\alphaup/2.$\par
With our usual notation, 
$\Phi{=}\{\alphaup\},$ so that $\bt\,{=}\,\qt_{\Phi}$; moreover $\xiup_{\Phi}(e_{i}){=}({-}1)^{{i+1}}/2$
and $\st_{\sigmaup}{=}\su_{2},$ with 
conjugation
$\sigmaup(e_{1}){=}e_{2},$ $\sigmaup(e_{2})=e_{1}.$ 
\par 
The irreducible finite dimensional complex linear 
representations of $\slt_{2}(\C)$  are indexed by the nonnegative integral  multiples $k{\cdot}\omegaup$ 
of $\omegaup$
and the corresponding irreducible $\slt_{2}(\C)$-module 
$\sfV_{k{\cdot}\omegaup}$ 
can be identified with the space of 
complex homogeneous polynomials of degree $k$ in two indeterminates 
\begin{equation*}
 \sfV_{k\omegaup}=\left.\left\{{\sum}_{h=0}^{k}a_{h}z^{h}w^{k-h}\,\right| \, a_{h}\in\C\right\}.
\end{equation*}
We have 
\begin{equation*}
 \sfV_{k\omegaup}={\bigoplus}_{h=0}^{k}\sfV_{k\omegaup}^{(k-2h)\omegaup},
\end{equation*}
where, for a diagonal $H$ in the canonical Cartan subalgebra of 
$\slt_{2}(\C),$  
\begin{equation*}
 \sfV_{k\omegaup}^{(k{-}2h)\omegaup}=\{\vq\in\sfV_{k\omegaup}\,{\mid}\, H{\cdot}\vq\,{=}\,
 (k{-}2h)\omegaup(H)
 \vq\}=\{a\,{\cdot}\,{z}^{h}w^{k-h}\mid a\in\C\},\;\; 0{\leq}h{\leq}k,
\end{equation*}
are the one-dimensional weight spaces contained in $\sfV_{k\omegaup}.$ 
\par 
Since $\bar{\omegaup}{=}-\omegaup,$ we have $\overline{\sfV}_{k\omegaup}\,{=}\,\sfV_{k\omegaup}.$ 
The anti-$\C$-linear automorphism $\thetaup_{k\omegaup}$
of $\sfV_{k\omegaup}$ defined by the conjugation $\sigmaup$ 
comes from $(z,w)\,{\mapsto}\,({-}\bar{w},\bar{z})$ and therefore 
\begin{equation*}
 \thetaup\left({\sum}_{h=0}^{k}a_{h}z^{h}w^{k-h}\right)={\sum}_{h=0}^{k}({-}1)^{h}
 \bar{a}_{h}
 {w}^{h}{z}^{k-h}
\end{equation*}
Then $\thetaup_{k\omegaup}^{2}$ equals $\id_{\sfV_{k\omegaup}}$ for $k$ even and
${-}\id_{\sfV_{k\omegaup}}$ for $k$ odd. Accordingly, for $k$ even 
$\sfV_{k\omegaup}$ is the complexification of an irreducible 
$(k{+}1)$-dimensional representation of the real type, 
that we will denote by $\sfV^{\R}_{k\omegaup}$; for
$k$ odd is isomorphic to a $2(k{+}1)$-dimensional irreducible representation  
of the quaternionic type of $\su_{2}$
(see e.g. \cite[Ch.IX, App.II, Prop.2]{Bou82}).
\begin{rmk}
Studying irreducible representation of $\su_2$
turns out to be of some interest in quantum physics, 
as they 
arise when considering rotations on fermionic and bosonic systems 
(for more details see \cite[Ch.5 , \S{5}]{Tin64}).\par
\end{rmk}
The subspace 
\begin{equation*}
 \sfV^{-}_{k\omegaup}={\bigoplus}_{k{<}2h{\leq}{2k}}\sfV_{k\omegaup}^{(k-2h)\omegaup}
\end{equation*}
is a $\bt$-submodule of $\sfV_{k\omegaup}$ and we can consider the
semidirect sum $\bt\,{\oplus}\,\sfV_{k\omegaup}^{-}$ as a subalgebra
of the abelian extension 
$\slt_{2}(\C)\,{\oplus}\,\sfV_{k\omegaup}$  
(cf.  e.g. \cite[Ch.VII,\S3]{HiSt}).
We may consider the map $\SL_{2}(\C)\to\CP^{1}$ associated to our choice
of a Borel subalgebra $\Bf$ as a principal bundle with structure group $\Bf.$
Then the Lie pair 
$(\slt_{2}(\C)\,{\oplus}\,\sfV_{k\omegaup},\bt\,{\oplus}\,\sfV_{k\omegaup}^{-})$
defines a complex holomorphic vector bundle $\sfE_{k}$ with 
base $\CP^{1}$ and typical fiber 
$\sfV_{k\omegaup}/\sfV_{k\omegaup}^{-}{\simeq}{\bigoplus}_{2h{\leq}k}\sfV_{k\omegaup}^{(k-2h)\omegaup}$ 
(this is an example of  Mostow fibration, see \cite{MN00} for more details).

\begin{prop} Let $\qq$ be any positive integer. Then 
\begin{equation}\label{exq}
(\gt_{\R},\qt_{\Phi}')=
(\su_{2}{\oplus}\,\sfV^{\,\R}_{2\qq\omegaup},
\bt\,{\oplus}\,\sfV_{2\qq\omegaup}^{\,-})\end{equation}
 is the $CR$ algebra of a
 $CR$ manifold 
 $\sfE_{2q}$, 
 of $CR$ dimension $q{+}1$ and
 $CR$ codimension $1$, which is fundamental and weakly nondegnenerate of 
 Levi order~$\qq.$ 
\end{prop}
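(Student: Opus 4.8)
The plan is to work entirely inside the complexification $\gt=\slt_{2}(\C)\oplus\sfV_{2\qq\omegaup}$, which is an abelian extension (so $[\sfV_{2\qq\omegaup},\sfV_{2\qq\omegaup}]=0$), and to compute the descending Levi chain of $\qt_{\Phi}'=\bt\oplus\sfV^{-}_{2\qq\omegaup}$ directly, since $(\gt_{\R},\qt_{\Phi}')$ is neither semisimple nor parabolic and the root‑combinatorial criteria of the previous subsections do not apply. First I would fix the $\slt_{2}(\C)$‑triple $H,Z_{\alphaup},Z_{-\alphaup}$ and the monomial weight basis $v_{h}=z^{h}w^{2\qq-h}$ $(0\leq h\leq 2\qq)$ of $\sfV_{2\qq\omegaup}$, with $H v_{h}=(2\qq-2h)v_{h}$, $Z_{\alphaup}v_{h}=a_{h}v_{h-1}$ (where $a_{h}\neq 0$ for $h\geq 1$) and $Z_{-\alphaup}v_{h}=b_{h}v_{h+1}$ (where $b_{h}\neq 0$ for $h\leq 2\qq-1$). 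Since $\bt=\langle H,Z_{-\alphaup}\rangle$ and $\sfV^{-}_{2\qq\omegaup}=\langle v_{\qq+1},\dots,v_{2\qq}\rangle$, while $\thetaup(v_{h})=(-1)^{h}v_{2\qq-h}$ gives $\bar{\bt}=\langle H,Z_{\alphaup}\rangle$ and $\overline{\sfV^{-}_{2\qq\omegaup}}=\langle v_{0},\dots,v_{\qq-1}\rangle$, I would record
\[
\qt_{\Phi}'\cap\bar{\qt}_{\Phi}'=\langle H\rangle,\qquad \Ht=\qt_{\Phi}'+\bar{\qt}_{\Phi}'=\slt_{2}(\C)\oplus\langle v_{h}:h\neq\qq\rangle,
\]
the single missing direction being the zero‑weight vector $v_{\qq}$; this already yields $CR$‑dimension $\qq+1$ and $CR$‑codimension $1$.

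Fundamentality is then immediate: $Z_{\alphaup}\in\bar{\bt}\subseteq\Ht$ and $v_{\qq+1}\in\sfV^{-}_{2\qq\omegaup}\subseteq\Ht$, and $[Z_{\alphaup},v_{\qq+1}]=a_{\qq+1}v_{\qq}$ with $a_{\qq+1}\neq 0$ (because $v_{\qq+1}$ is not the highest weight vector), so $v_{\qq}$ lies in the subalgebra generated by $\Ht$; as $\Ht$ already contains every other basis vector, this subalgebra is all of $\gt$.

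The heart of the argument is the inductive computation of the chain, for which I claim
\[
(\qt_{\Phi}')^{(\pq)}=\langle H\rangle\oplus\langle v_{\qq+1+\pq},\dots,v_{2\qq}\rangle\qquad\text{for all }\pq\geq 1.
\]
The mechanism is that bracketing a candidate $Z\in(\qt_{\Phi}')^{(\pq-1)}$ against the weight‑raising generator $Z_{\alphaup}\in\bar{\qt}_{\Phi}'$ sends $v_{h}\mapsto a_{h}v_{h-1}$; writing out $(\qt_{\Phi}')^{(\pq-1)}+\bar{\qt}_{\Phi}'$ one sees that its only missing $\sfV$‑directions are $v_{\qq},\dots,v_{\qq+\pq-1}$, so the requirement $[Z,Z_{\alphaup}]\in(\qt_{\Phi}')^{(\pq-1)}+\bar{\qt}_{\Phi}'$ forces the bottom surviving coefficient, that of $v_{\qq+\pq}$, to vanish, while the brackets against $H$ and against $v_{0},\dots,v_{\qq-1}$ (using $[\sfV,\sfV]=0$) impose no further condition. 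The base step $\pq=1$ must be treated separately, because $(\qt_{\Phi}')^{(0)}=\qt_{\Phi}'$ still contains $Z_{-\alphaup}$: there bracketing against $v_{\qq-1}\in\overline{\sfV^{-}_{2\qq\omegaup}}$ kills the $Z_{-\alphaup}$‑coefficient (since $Z_{-\alphaup}v_{\qq-1}=b_{\qq-1}v_{\qq}$) and bracketing against $Z_{\alphaup}$ kills the $v_{\qq+1}$‑coefficient, giving $(\qt_{\Phi}')^{(1)}=\langle H\rangle\oplus\langle v_{\qq+2},\dots,v_{2\qq}\rangle$, consistent with the formula.

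Finally, the formula gives $(\qt_{\Phi}')^{(\qq-1)}=\langle H\rangle\oplus\langle v_{2\qq}\rangle$ and $(\qt_{\Phi}')^{(\qq)}=\langle H\rangle=\qt_{\Phi}'\cap\bar{\qt}_{\Phi}'$, whence $(\qt_{\Phi}')^{(\qq-1)}\supsetneqq(\qt_{\Phi}')^{(\qq)}=\qt_{\Phi}'\cap\bar{\qt}_{\Phi}'$, i.e.\ $(\gt_{\R},\qt_{\Phi}')$ is weakly nondegenerate of Levi order exactly $\qq$. The delicate point will be the bookkeeping in the induction, namely checking that at each step exactly one new coefficient (the lowest surviving one) is forced to zero and that no bracket with the nonnegative‑weight part of $\bar{\qt}_{\Phi}'$ produces an extra constraint; it is precisely the abelian extension structure that prevents the $v$‑against‑$v$ brackets from interfering.
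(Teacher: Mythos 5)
Your proposal is correct and follows essentially the same route as the paper: both work in the weight basis of $\sfV_{2\qq\omegaup}$ under the Cartan element $H$, identify $\overline{\sfV^{\,-}_{2\qq\omegaup}}=\sfV^{\,+}_{2\qq\omegaup}$ so that $\qt_{\Phi}'\cap\bar{\qt}_{\Phi}'=\langle H\rangle$ and the single missing direction of $\qt_{\Phi}'+\bar{\qt}_{\Phi}'$ is the zero-weight line, and both exploit that only the raising operator $Z_{\alphaup}$ produces nontrivial constraints while the abelian ideal contributes no $v$-against-$v$ brackets. The only difference is bookkeeping: you compute each term $(\qt_{\Phi}')^{(\pq)}=\langle H\rangle\oplus\langle v_{\qq+1+\pq},\hdots,v_{2\qq}\rangle$ of the descending chain explicitly by induction, whereas the paper reaches the same conclusion more compactly by noting via the grading functional $\xiup_{\Phi}$ that a weight vector of weight $-2j\omegaup$ has Levi order exactly $j$, so the order is $\qq$; your induction, including the separate base step killing the $Z_{-\alphaup}$-coefficient, is sound.
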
 
\begin{proof} We have 
\begin{equation*}
\bar{\sfV}_{2\qq\omegaup}^{\,-}{=}\sfV_{2\qq\omegaup}^{\,+}= 
{\bigoplus}_{h=1}^{\qq}\sfV_{2\qq\omegaup}^{2h\omegaup} 
\quad\text{and}\quad
 \sfV_{2\qq\omegaup}= \sfV_{2\qq\omegaup}^{\,-} \oplus\sfV_{2\qq\omegaup}^{\,0} 
 \oplus\sfV_{2\qq\omegaup}^{\,+}.
\end{equation*}
\par
 If $Z_{\alphaup},Z_{-\alphaup},H$ is the canonical basis of $\slt_{2}(\C)$
 and $\wq$ a nonzero vector of $\sfV_{2\qq\omegaup}^{-2\qq\omegaup},$
then the images of $X_{-\alphaup},\wq,X_{\alphaup}\wq,\hdots,X^{\qq-1}_{\alphaup}\wq$
generate $\qt'_{\Phi}/(\qt'_{\Phi}{\cap}\bar{\qt}'_{\Phi}).$ Since 
\begin{equation*}
 [\underset{\text{$h$ times}}{\underbrace{X_{\alphaup},\hdots,X_{\alphaup}}},X_{\alphaup}^{\qq-h}\wq]
 =X_{\alphaup}^{\qq}\wq \in \sfV^{\,0}_{2\qq\omegaup}\setminus\{0\},
\;\; [X_{\alphaup}^{\qq+1}\wq,X_{-\alphaup}]=-2 X_{\alphaup}^{\qq}\wq 
\in \sfV^{\,0}_{2\qq\omegaup}\setminus\{0\}
\end{equation*}
we obtain that $\sfE_{2\qq}$ is fundamental and weakly nondegenerate. 
With the notation of the previous section, we have 
 $\Qq_{\Phi}^{\,c}{\cap}\bar{\Qq}_{\Phi}{=}\{\alphaup\},$
 with $\xiup_{\Phi}(\alphaup){=}1$ and $\xiup_{\Phi}(-2j\omegaup){=}{-}j.$
 Since $\gt/(\qt'_{\Phi}{+}\bar{\qt}'_{\Phi})$ is generated by the image of
 $\sfV_{2\qq\omegaup}^{\,0},$  by the above considerations 
 the Levi order of an element of $\sfV^{-2j\omegaup}_{2\qq\omegaup}$ equals $j.$ 
This shows that the Levi order of $(\gt_{\R},\qt'_{\Phi})$ is $\qq.$ 
\end{proof}

In an analogous way we can also prove 
\begin{prop}
 For each positive integer $\qq$,
 the homogeneous $CR$ manifold $\sfE_{2\qq}$ is    
 contact nondegenerate of order $\qq.$ \qed
\end{prop}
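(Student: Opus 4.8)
The plan is to read off the contact order by the same weight bookkeeping that produced the Levi order in the previous proposition, the only change being that one now brackets against the whole contact space $\Ht=\qt'_{\Phi}+\bar{\qt}'_{\Phi}$ rather than against $\bar{\qt}'_{\Phi}$. First I record the data of the abelian extension $\gt=\slt_2(\C)\oplus\sfV_{2\qq\omegaup}$. Since $\bt\cap\bar{\bt}=\hg$ and $\sfV_{2\qq\omegaup}^{\,-}\cap\sfV_{2\qq\omegaup}^{\,+}=\{0\}$, we have $\qt'_{\Phi}\cap\bar{\qt}'_{\Phi}=\hg$ and
\begin{equation*}
 \Ht=\slt_2(\C)\oplus\sfV_{2\qq\omegaup}^{\,-}\oplus\sfV_{2\qq\omegaup}^{\,+},\qquad \gt/\Ht\simeq\sfV_{2\qq\omegaup}^{\,0},
\end{equation*}
so the single direction transverse to the contact distribution is the zero weight line $\sfV_{2\qq\omegaup}^{\,0}$, on which $\xiup_{\Phi}$ vanishes. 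As the previous proposition gives Levi order $\qq$ and the contact order never exceeds the Levi order, the contact order is already at most $\qq$; the task is to prove it is at least $\qq$.

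The decisive point, and the only genuine difference from the Levi computation, is that passing from $\bar{\qt}'_{\Phi}$ to $\Ht$ introduces no operator that raises the $\sfV$ weight faster. Because the ideal $\sfV_{2\qq\omegaup}$ is abelian, $[\,\sfV_{2\qq\omegaup},\sfV_{2\qq\omegaup}\,]=\{0\}$, so for $Z\in\sfV_{2\qq\omegaup}$ and $L\in\Ht$ the bracket $[L,Z]$ depends only on the $\slt_2(\C)$ component of $L$ and again lies in $\sfV_{2\qq\omegaup}$. On the root spaces of $\slt_2(\C)$ the functional $\xiup_{\Phi}$ takes the values $\xiup_{\Phi}(\pm\alphaup)=\pm1$, whence
\begin{equation*}
 \xiup_{\Phi}\big([L,Z]\big)\leq\xiup_{\Phi}(Z)+1\qquad(L\in\Ht,\ Z\in\sfV_{2\qq\omegaup}),
\end{equation*}
with equality only when the $Z_{\alphaup}$ component of $L$ is active. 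Thus neither the extra summand $\C Z_{-\alphaup}\subset\bt$ (which \emph{lowers} $\xiup_{\Phi}$) nor $\sfV_{2\qq\omegaup}^{\,-}$ (which acts trivially) can accelerate the escape.

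I then test the lowest weight vector $\wq\in\sfV_{2\qq\omegaup}^{-2\qq\omegaup}\subset\qt'_{\Phi}\setminus(\qt'_{\Phi}\cap\bar{\qt}'_{\Phi})$, for which $\xiup_{\Phi}(\wq)=-\qq$. Any iterated bracket $[L_{1},[L_{2},\hdots,[L_{\pq},\wq]]]$ with $L_{1},\hdots,L_{\pq}\in\Ht$ stays inside $\sfV_{2\qq\omegaup}$ and, by the inequality above, has $\xiup_{\Phi}\leq-\qq+\pq$. To leave $\Ht$ the bracket must land in $\sfV_{2\qq\omegaup}^{\,0}$, i.e. reach $\xiup_{\Phi}=0$; this forces $\pq\geq\qq$, while for $\pq<\qq$ the result has $\xiup_{\Phi}<0$ and hence lies in $\sfV_{2\qq\omegaup}^{\,-}\subset\Ht$. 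Consequently $\wq$ cannot be pushed out of $\Ht$ by fewer than $\qq$ brackets, whereas the length $\qq$ chain $[\underbrace{Z_{\alphaup},\hdots,Z_{\alphaup}}_{\qq\text{ times}},\wq]=X_{\alphaup}^{\qq}\wq\in\sfV_{2\qq\omegaup}^{\,0}\setminus\{0\}$ does succeed. This yields contact order $\geq\qq$, and together with the upper bound the desired equality.

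The step I expect to require the most care is not the escape count on $\sfV_{2\qq\omegaup}$, which is immediate once abelianity is invoked, but the matching of this count with the formal definition of contact order through the descending contact chain \eqref{e1.2}: here $\Ht$ is \emph{not} a subalgebra, since for instance $[Z_{\alphaup},\sfV_{2\qq\omegaup}^{-2\omegaup}]=\sfV_{2\qq\omegaup}^{\,0}\not\subset\Ht$. One must therefore work modulo the isotropy $\qt'_{\Phi}\cap\bar{\qt}'_{\Phi}=\hg$ and verify that the root vectors $Z_{\pm\alphaup}$, which leave $\Ht$ in a single step, are discarded at the first stage of the chain, so that from then on the chain is governed purely by the weight distance inside $\sfV_{2\qq\omegaup}$ and stabilizes exactly at step $\qq$.
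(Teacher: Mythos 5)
Your proof is correct and is essentially the argument the paper intends: the paper states this proposition with only the remark that it follows ``in an analogous way'' from the Levi-order computation, and your weight bookkeeping via $\xiup_{\Phi}$ — abelianity of the ideal so that only the $\slt_{2}(\C)$-component of $L\in\Ht$ acts, escape from $\Ht$ possible only through the zero-weight line, the lower bound from $\xiup_{\Phi}(\wq)={-}\qq$ with at most unit increase per bracket, and the witness chain $X_{\alphaup}^{\qq}\wq\in\sfV^{\,0}_{2\qq\omegaup}{\setminus}\{0\}$ together with the upper bound contact order ${\leq}$ Levi order — is exactly that analogue. Your closing point, that $Z_{\pm\alphaup}$ are discarded at the first stage of the descending contact chain (since $\Ht$ is not a subalgebra) after which the chain is governed purely by weight distance inside $\sfV_{2\qq\omegaup}$, correctly supplies the one detail the paper leaves implicit.
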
 
\begin{rmk}
Representations
$\sfV_{k\omegaup}$ with an odd $k$ are canonically associated with 
complex holomorphic vector bundles $\sfE_{k},$ 
of rank $(k{+}1),$ with base
$\CP^{1}.$ 
\end{rmk}
\bibliographystyle{amsplain}
\renewcommand{\MR}[1]{}

\bibliography{homog}

\providecommand{\bysame}{\leavevmode\hbox to3em{\hrulefill}\thinspace}
\providecommand{\MR}{\relax\ifhmode\unskip\space\fi MR }
\providecommand{\MRhref}[2]{%
  \href{http://www.ams.org/mathscinet-getitem?mr=#1}{#2}
}
\providecommand{\href}[2]{#2}
\begin{thebibliography}{10}

\bibitem{AMN06}
A.~Altomani, C.~Medori, and M.~Nacinovich, \emph{The {CR} structure of minimal
  orbits in complex flag manifolds}, J. {L}ie Theory \textbf{16} (2006), no.~3,
  483--530. \MR{MR2248142 (2007c:32043)}

\bibitem{AMN08}
\bysame, \emph{On the topology of minimal orbits in complex flag manifolds},
  Tohoku Math. J. (2) \textbf{60} (2008), no.~3, 403--422. \MR{MR2453731}

\bibitem{AMN06b}
\bysame, \emph{Orbits of real forms in complex flag manifolds}, Ann. Sc. Norm.
  Super. Pisa Cl. Sci. (5) \textbf{9} (2010), no.~1, 69--109. \MR{2668874}

\bibitem{AF79}
A.~Andreotti and G.~A. Fredricks, \emph{Embeddability of real analytic
  {C}auchy-{R}iemann manifolds}, Ann. Scuola Norm. Sup. Pisa Cl. Sci. (4)
  \textbf{6} (1979), no.~2, 285--304. \MR{MR541450 (80h:32019)}

\bibitem{Ara62}
S.~Araki, \emph{On root systems and an infinitesimal classification of
  irreducible symmetric spaces}, J. Math. Osaka City Univ \textbf{13} (1962),
  1--34.

\bibitem{Bou68}
N.~Bourbaki, \emph{Lie groups and {L}ie algebras. {C}hapters 4--6}, Elements of
  Mathematics (Berlin), Springer-Verlag, Berlin, 2002, Translated from the 1968
  French original by Andrew Pressley. \MR{1890629}

\bibitem{Bou82}
\bysame, \emph{Lie groups and {L}ie algebras. {C}hapters 7--9}, Elements of
  Mathematics (Berlin), Springer-Verlag, Berlin, 2005, Translated from the 1975
  and 1982 French originals by Andrew Pressley. \MR{2109105}

\bibitem{BrLo02}
R.~Bremigan and J.~Lorch, \emph{Orbit duality for flag manifolds}, Manuscripta
  Math. \textbf{109} (2002), no.~2, 233--261. \MR{MR1935032 (2004b:22020)}

\bibitem{BHLN}
J.~Brinkschulte, C.D. Hill, J.~Leiterer, and M.~Nacinovich, \emph{Aspects of
  the {L}evi form}, Boll. Unione Mat. Ital. \textbf{13} (2020), 71--89.

\bibitem{fels07}
G.~Fels, \emph{Locally homogeneous finitely nondegenerate {CR}-manifolds},
  Math. Res. Lett. \textbf{14} (2007), no.~6, 893--922. \MR{MR2357464
  (2009c:32066)}

\bibitem{FHW06}
G.~Fels, A.~T. Huckleberry, and J.~A. Wolf, \emph{Cycle spaces of flag
  domains}, Progress in Mathematics, vol. 245, Birkh\"auser Boston Inc.,
  Boston, MA, 2006, A complex geometric viewpoint. \MR{MR2188135 (2006h:32018)}

\bibitem{Free77}
M.~Freeman, \emph{Local biholomorphic straightening of real submanifolds}, ann.
  of Math. \textbf{106} (1977), 319--352.

\bibitem{Freeman1977}
\bysame, \emph{Real submanifolds with degenerate {L}evi form}, Several complex
  variables, {P}art 1, Proc. Sympos. Pure Math., Williams Coll., Williamstown,
  Mass., 1975, vol. XXX, Amer. Math. Soc., 1977, p.~141–147.

\bibitem{GS04}
A.~N. Garc{\'{\i}}a and C.~U. S{\'a}nchez, \emph{On extrinsic symmetric
  {CR}-structures on the manifolds of complete flags}, Beitr\"age Algebra Geom.
  \textbf{45} (2004), no.~2, 401--414. \MR{MR2093174 (2005f:53078)}

\bibitem{Gr68}
S.~Greenfield, \emph{{C}auchy-{R}iemann equations in several variables}, Ann.
  Scuola Norm. Sup. Pisa \textbf{22} (1968), 275--314.

\bibitem{He64}
R.~Hermann, \emph{Convexity and pseudoconvexity for complex manifolds, {II}},
  Indiana J. Math. \textbf{22} (1964), 1065--1070.

\bibitem{Kn:2002}
A.~W. Knapp, \emph{{L}ie groups beyond an introduction}, second ed., Progress
  in Mathematics, vol. 140, Birkh\"auser Boston Inc., Boston, MA, 2002.
  \MR{MR1920389 (2003c:22001)}

\bibitem{LN2005}
A.~Lotta and M.~Nacinovich, \emph{On a class of symmetric {CR} manifolds},
  Advances in Mathematics \textbf{191} (2005), no.~1, 114 -- 146.

\bibitem{NMSM}
S.~Marini, C.~Medori, M.~Nacinovich, and A.~Spiro, \emph{On transitive contact
  and {$CR$} algebras}, Ann. Sc. Norm. Super. Pisa Cl. Sci. \textbf{XX} (2020),
  771--795.

\bibitem{MaNa1}
S.~Marini and M.~Nacinovich, \emph{Orbits of real forms, {M}atsuki duality and
  {$CR$}-cohomology}, Complex and symplectic geometry, Springer INdAM Ser.,
  vol.~21, Springer, Cham, 2017, pp.~149--162. \MR{3645313}

\bibitem{MN00}
C.~Medori and M.~Nacinovich, \emph{Complete nondegenerate locally standard {CR}
  manifolds}, Math. Ann. \textbf{317} (2000), no.~3, 509--526. \MR{MR1776115
  (2002a:32035)}

\bibitem{MN05}
\bysame, \emph{Algebras of infinitesimal {CR} automorphisms}, J. Algebra
  \textbf{287} (2005), no.~1, 234--274. \MR{MR2134266 (2006a:32043)}

\bibitem{NN}
L.~Newlander, A.;~Nirenberg, \emph{Complex analytic coordinates in almost
  complex manifolds.}, Ann. of Math. \textbf{65} (1957), 391--404.

\bibitem{HiSt}
P.J.Hilton and U.Stammbach, \emph{A course in homological algebra}, Graduate
  texts in mathematics, vol.~4, Springer-Verlag, New York, Heidelberg, Berlin,
  1971.

\bibitem{Satake1960}
I.~Satake, \emph{On representations and compactifications of symmetric
  riemannian spaces}, Ann. of Math. \textbf{71} (1960), 77--110.

\bibitem{Tan70}
N.~Tanaka, \emph{On differential systems, graded {L}ie algebras and
  pseudogroups}, J. Math. Kyoto Univ. \textbf{10} (1970), 1--82. \MR{MR0266258
  (42 \#1165)}

\bibitem{Tin64}
M.~Tinkham, \emph{Group theory and quantum mechanics}, McGraw-Hill, 1964.

\bibitem{Wolf69}
J.~A. Wolf, \emph{The action of a real semisimple group on a complex flag
  manifold. {I}. {O}rbit structure and holomorphic arc components}, Bull. Amer.
  Math. Soc. \textbf{75} (1969), 1121--1237.

\end{thebibliography}

\end{document}